\theoremstyle{plain}
\newtheorem{thm}{Theorem}[section]
\newtheorem{lemma}[thm]{Lemma}
\newtheorem{cor}[thm]{Corollary}
    \newtheoremstyle{TheoremNum}
        {\topsep}{\topsep}              
        {\itshape}                      
        {}                              
        {\bfseries}                     
        {.}                             
        { }                             
        {\thmname{#1}\thmnote{ \bfseries #3}}
    \theoremstyle{TheoremNum}
    \newtheorem{thmn}{Theorem}
    \newtheorem{coron}{Corollary}
\theoremstyle{definition}
\newtheorem{defn}[thm]{Definition}
\newtheorem{exmp}[thm]{Example}
\newtheorem{rmk}[thm]{Remark}
    \newtheoremstyle{DefNum}
        {\topsep}{\topsep}              
        {}                              
        {}                              
        {\bfseries}                     
        {.}                             
        { }                             
        {\thmname{#1}\thmnote{ \bfseries #3}}
    \theoremstyle{DefNum}
    \newtheorem{defnn}{Definition}
\newcommand{\K}{\bb{K}}
\renewcommand{\O}{\mathcal{O}}
\newcommand{\mf}[1]{\mathfrak{#1}}
\newcommand{\m}{\mf{m}}
\newcommand{\len}{\lambda}
\DeclareMathOperator{\Spec}{Spec}
\DeclareMathOperator{\Supp}{Supp}
\renewcommand{\ring}[1]{\mathring{#1}}
\title{Generalization of Gurjar's Hyperplane section Theorem to arbitrary analytic varieties and A$\m$AC classes}
\author{A.J. Parameswaran, Mohit Upmanyu}
\begin{document}

\maketitle

\begin{abstract}
This paper aims to generalize the hyperplane section Theorem of R.V. Gurjar to arbitrary (local) analytic varieties even if the intersection with hyperplanes is not necessarily isolated. 

In the case of formal varieties, we generalize the statement to work for different classes of hypersurfaces other than hyperplanes. We call the classes of functions (which are subsets of the formal power series ring) defining these classes of hypersurface algebraic $\m$-adically closed (A$\m$AC) classes.
\end{abstract}

\tableofcontents

\section{Introduction}

Let $A = \bb{C}[\![x_1,x_2,\ldots,x_n]\!]$ and $\m$ denote its maximal ideal. 
To illustrate the problem under consideration we begin with an example. 
We look at quadratic singularity defined by the equation $f := x_1^2+\cdots+x_n^2$.
Let $X = \Spec(A/\<f\>)$ and further, let $h := x_1 + i x_2 \in \m - \m^2$ and $H$ be the subscheme defined by $h$.
Now, one can see that $X \cap H$ has a line singularity.
\\

Let $h_j := x_1 + i x_2  + x_2^j \in \m - \m^2$ and let $H_j$ be the subscheme defined by $h_j$.
We can regard $H_j \iso \Spec(\bb{C}[\![x_2,\ldots,x_n]\!])$, then $X \cap H_j$ considered as a subscheme of $\Spec(\bb{C}[\![x_2,\ldots,x_n]\!])$ is given by the equation $2ix_2^{j+1} - x_2^{2j} + x_3^2+\cdots+x_n^2$. The Milnor number of $X \cap H_j$ is $j$.
\\

Following the example above it is possible to prove:

\begin{thm}(\cite{GurjarHST})
Let $A = \bb{C}[\![x_1,x_2,\ldots,x_n]\!]$ and $\m$ denote its maximal ideal.
Let $f \in \m^2$ \st the subscheme $X \subset \Spec(A)$ defined by $f$ has a curve singularity. Assume $h \in \m-\m^2$ \st $X \cap H$ has a non-isolated singularity. 
Then there exists an infinite sequence $h_1,\dots,h_i,\dots \in \m-\m^2$ \st $X \cap H_i$ has an isolated singularity and the Milnor number of $X \cap H_i \to \infty$ as $i \to \infty$.
\end{thm}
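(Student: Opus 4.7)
The plan is to reduce to a normal form, perturb the hyperplane by carefully chosen high-order terms transverse to the singular locus of the original intersection, and bound the resulting Milnor number from below by an intersection-theoretic argument. Since $h \in \m - \m^2$, an analytic change of coordinates reduces the problem to the case $h = x_1$, so that $H \cong \Spec(\bb{C}[\![x_2,\ldots,x_n]\!])$ and $X \cap H$ is cut out by $\bar{f}(x_2,\ldots,x_n) := f(0,x_2,\ldots,x_n)$. The hypothesis that $X \cap H$ has a non-isolated singularity means the critical locus of $\bar{f}$ is positive-dimensional; let $C$ be a positive-dimensional irreducible component through the origin. Since $C$ is a genuine curve germ, one can choose an index $k \in \{2,\ldots,n\}$ with $C \not\subset V(x_k)$.

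Set $h_j := x_1 + x_k^{j} \in \m - \m^2$. By the implicit function theorem $H_j := V(h_j)$ is parametrized by $x_1 = -x_k^{j}$, so $X \cap H_j$ is cut out by $\bar{f}_j(x_2,\ldots,x_n) := f(-x_k^{j}, x_2, \ldots, x_n)$. The leading part of $\bar{f}_j - \bar{f}$ is $-x_k^{j}\cdot \partial_1 f(0, x_2,\ldots,x_n)$ modulo terms of order $\geq 2j$, and we choose the normal-form coordinates generically (within the allowed changes) so that $\partial_1 f(0,\cdot)|_C \not\equiv 0$; equivalently, $x_k^{j}$ can be replaced by a generic $\phi_j \in \m^j$ whose leading form does not vanish on $C$. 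With this choice $C$ no longer lies in the critical locus of $\bar{f}_j$, and a Bertini-type genericity argument eliminates any residual tangency curves of $H_j$ with the smooth part of $X$, so that $X \cap H_j$ has an isolated singularity at the origin.

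Along $C$ the germ $\bar{f}$ has a well-defined positive transversal Milnor number $\mu^\perp(C)$, computed on a generic transverse slice through a smooth point of $C$. A L\^{e}--Iomdine type attaching formula then yields a lower bound
\[
\mu(\bar{f}_j) \;\geq\; \mu^\perp(C) \cdot \bigl(C \cdot V(x_k^{j})\bigr)_0 \;=\; j \cdot \mu^\perp(C) \cdot \bigl(C \cdot V(x_k)\bigr)_0,
\]
and since $C \not\subset V(x_k)$ the intersection number $(C \cdot V(x_k))_0$ is a positive integer, giving $\mu(\bar{f}_j) \to \infty$ as $j \to \infty$.

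The hardest step is this last estimate. Even granting a L\^{e}-type attaching formula, one must handle the case where the critical locus of $\bar{f}$ has several irreducible components with different transversal Milnor numbers — forcing a uniform choice of perturbation direction transverse to all of them — and one must verify that the intersection-theoretic lower bound is insensitive to the higher-order corrections in $\bar{f}_j - \bar{f}$ beyond its leading monomial. This stratification-theoretic book-keeping, together with the Bertini-type genericity argument needed to force $X \cap H_j$ to be isolated, is where the bulk of the proof would lie.
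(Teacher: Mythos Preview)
The paper does not actually prove this theorem; it is quoted from \cite{GurjarHST} and only illustrated by the quadratic example preceding it. The closest thing to a proof in the paper is the final theorem of Section~5, which establishes the analogous statement with ``multiplicity of $\S_{X\cap H_i}$'' in place of ``Milnor number of $X\cap H_i$'', and that is the natural point of comparison.

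Your perturbation scheme --- normalize $H$ to $\{x_1=0\}$ and take $h_j = x_1 + (\text{order-}j\text{ term})$ --- is exactly what the paper does both in the introductory example and in Section~5: there $H$ is written as $\{z=0\}$, a generic $H'=\{z=g\}$ is chosen, and one sets $H_i=\{z=g^i\}$. The divergence is in how growth is established. The paper simply observes that modulo $g^{i-1}$ the defining ideals of $\S_{X\cap H}$ and $\S_{X\cap H_i}$ coincide, so the thickened slice $\S_{X\cap H}\cap\{g^{i-1}=0\}$ sits scheme-theoretically inside $\S_{X\cap H_i}$; its multiplicity is at least $i$, and in the hypersurface case the Milnor-number statement then follows from the elementary inequality $\mu\geq\tau$. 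No L\^{e}--Iomdine formula, no transversal Milnor numbers, no stratification bookkeeping --- and the argument works for arbitrary equidimensional $X$, not only hypersurfaces.

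Your route through a L\^{e}--Iomdine-type attaching formula is a reasonable instinct, but the gap you flag is real and not merely cosmetic: your perturbation is $\bar f_j=\bar f - x_k^{j}\,\partial_1 f(0,\cdot)+O(x_k^{2j})$, and since $f\in\m^2$ forces $\partial_1 f(0,\cdot)\in\m$, this is \emph{not} of the form $\bar f+\ell^N$ to which Iomdine's formula applies. Absorbing that non-unit factor and controlling the higher-order corrections is genuine extra work, whereas the containment argument above bypasses it entirely. If you want to salvage your approach with minimal effort, the cleanest fix is to drop the L\^{e}--Iomdine estimate and instead bound $\tau(\bar f_j)$ from below by the scheme-containment trick, then invoke $\mu\geq\tau$.
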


One can then ask whether the converse holds. In answer to this, R.V. Gurjar proved the following hyperplane section theorem.

\begin{thm}\label{GHST}(\cite{GurjarHST})
Let $A = \bb{C}[\![x_1,x_2,\ldots,x_n]\!]$ and $\m$ denote its maximal ideal.
Let $f \in \m^2$. Assume that for every $h \in \m-\m^2$,
the ring $A/\<f,h\>$ is an isolated singularity of dimension $n-2$. 
Then there abstractly\footnote{We use the word abstractly to indicate that what we give is an existential argument. More precisely, what we prove is that $\mu$ is not infinite so we conclude that a bound exists.} exists an integer $N(f)$ (depending on f) such that $\mu(A/\<f, h\>) \leq N(f)$, for any $h \in \m-\m^2$.
\end{thm}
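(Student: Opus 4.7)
The plan is to argue by contradiction: assume there is a sequence $h_k \in \m - \m^2$ with $\mu_k := \mu(A/\<f, h_k\>) \to \infty$, and produce a limit $h_\infty \in \m - \m^2$ for which $A/\<f, h_\infty\>$ has non-isolated singularity, contradicting the hypothesis. A preliminary observation is that the hypothesis already forces $V(f)$ to have at most an isolated singularity at the origin: if $\mathrm{Sing}(V(f))$ contained a positive-dimensional component $C$ through $0$, one could construct $h \in \m - \m^2$ with $C \subset V(h)$ formally (choose the linear part of $h$ to vanish on the tangent direction of $C$, then iteratively correct higher-order terms by the formal implicit function theorem), whence $V(f, h) \supset C$ would be non-isolated, contradicting the hypothesis. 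Consequently $\mu(f) := \mu(A/\<f\>) < \infty$, and the L\^{e}--Greuel formula gives
$$\mu(A/\<f, h\>) + \mu(f) = \dim_{\bb{C}} A/(f, I_2(\mathrm{Jac}(f,h))),$$
so it suffices to bound the right-hand side uniformly in $h$.

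To extract a limit, I would first normalize each $h_k$ by multiplying by a unit of $A$ (which preserves both $\<h_k\>$ and the Milnor number) and rescale so that the linear part $h_k^{(1)} \in \m/\m^2 \cong \bb{C}^n$ lies on the unit sphere; by compactness, pass to a subsequence with $h_k^{(1)} \to \ell \neq 0$. Then, to build an $\m$-adic limit $h_\infty$ with linear part $\ell$, I would iterate a diagonal procedure degree by degree: at each degree $d$, further multiply $h_k$ by a unit of $1 + \m^d$ to absorb the unbounded degree-$d$ contribution, and extract a subsequence for which the modified degree-$d$ part converges in the finite-dimensional space $\m^d/\m^{d+1}$. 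The outcome should be $h_\infty \in \m - \m^2$ with $h_\infty^{(1)} = \ell$; by hypothesis $\mu(A/\<f, h_\infty\>) = N < \infty$, and by finite determinacy applied to the hypersurface $f|_{V(h_\infty)} \in \bb{C}[\![y_2,\ldots,y_n]\!]$ one obtains $\mu(A/\<f, h\>) = N$ for every $h$ with $h - h_\infty \in \m^{N+C}$, where $C$ accounts for the Weierstrass translation between perturbations of $h$ and of $f|_{V(h)}$. For $k$ large enough this would force $\mu_k = N$, contradicting $\mu_k \to \infty$.

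The main obstacle is the limit-extraction step. A priori the higher-order coefficients of $h_k$ may be unbounded in every graded piece $\m^d/\m^{d+1}$, so na\"ive Cantor diagonalization need not produce an $\m$-adically convergent subsequence. The plan is to exploit unit multiplication to absorb this unboundedness while keeping the result in $\m - \m^2$ (rather than degenerating into $\m^2$); ensuring that such an absorption is always possible, and that the resulting $h_\infty$ actually has $\mu(A/\<f, h_\infty\>) < \infty$, is the delicate technical point. A potential alternative is to invoke Artin approximation to replace each $h_k$ by a polynomial of controlled degree (allowed by finite determinacy, since $\mu_k$ bounds the relevant degree of truncation), and then to use constructibility of the Milnor number on finite-dimensional algebraic families of $h$'s---combined with the hypothesis---to rule out unboundedness on each stratum.
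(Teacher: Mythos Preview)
Your main line of attack---extract an $\m$-adic limit $h_\infty$ of the $h_k$ via compactness and a diagonal/unit-absorption procedure---has a genuine gap, and the paper proceeds quite differently.

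\medskip

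\textbf{Where the extraction breaks.} Multiplying $h_k$ by a unit in $1+\m^{d-1}$ changes the degree-$d$ part of $h_k$ only by an element of $\ell\cdot\mathrm{Sym}^{d-1}(\m/\m^2)$, where $\ell=h_k^{(1)}$: if $u\in\m^{d-1}$ then the degree-$d$ contribution of $uh_k$ is $u^{(d-1)}\ell$. Hence only monomials divisible by $\ell$ can be absorbed. For instance, if $h_k=x_1+kx_2^2$, no unit multiplication will make the degree-$2$ part bounded while keeping the linear part on the unit sphere. So the ``absorb the unbounded degree-$d$ contribution'' step fails in general, and there is no reason for any subsequence (even after unit renormalization) to converge $\m$-adically. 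The space $\m-\m^2$ with the $\m$-adic topology is simply not sequentially compact in the way your argument requires.

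\medskip

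\textbf{What the paper does instead.} The paper (following Mori) replaces topological compactness by Zariski constructibility at each finite level. One first passes from unbounded Milnor numbers to unbounded Tjurina numbers via the known inequality $\mu\leq(\dim+1)\,\tau$; since $\tau(X\cap H)$ equals the multiplicity of the zero-dimensional scheme $\S_{X\cap H}$, this puts us in the setting of the paper's hyperplane-section corollary. That corollary is proved by showing that
\[
D(1)=\{h\in B:\dim\S_H\geq 1\}=\varprojlim_r D_r(1),\qquad D_r(1)\subset B/\m^r\ \text{Zariski constructible},
\]
and that the hypotheses force $(\m-\m^2)_r\cap D_r(1)\neq\emptyset$ for every $r$ (the witnesses being the truncations of the $h_i$ with large Tjurina number, via the Hilbert--Samuel lower bounds of Lemma~\ref{dim bound lemma}). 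The nonemptiness lemma---which uses that $\bb{C}$ is uncountable to find a point in a countable intersection inside a constructible inverse system---then produces $h\in(\m-\m^2)\cap D(1)$, contradicting the isolated-singularity hypothesis. No limit of the $h_i$ is ever taken; the element $h$ is manufactured level by level from constructible geometry.

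\medskip

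Your final sentence (``use constructibility of the Milnor number on finite-dimensional algebraic families'') is exactly the right instinct and is essentially Mori's idea underlying the paper's framework; but the proposal as written does not develop it, and the primary argument you give does not go through. The L\^e--Greuel reduction and the preliminary claim that $V(f)$ has isolated singularity are not needed in the paper's route.
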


Here $\mu(A/\<f, h\>)$ is the Milnor number of the hypersurface defined by $f,h$.
\\

Later Shigefumi Mori gave a simpler proof of \ref{GHST} in \cite{MoriHST}, where to prove the non-emptiness of a certain set, he critically used the fact that it is the inverse limit of Zariski constructible sets. Inspired by this, we give the following definition.

\newcommand{\AmACDefn}{Let $\K$ be an algebraically closed uncountable field. Let $A = \K[\![x_1,x_2,\ldots,x_n]\!]$ and $\m$ denote its maximal ideal.
A subset $P \subset B = \K[\![x_1,\ldots,x_n]\!]/I$ is said to be an \textbf{algebraic $\m$-adically closed (A$\m$AC) class}, if $P = \ula{\lim} P_i$ where $P_i$ are constructible subsets of $B/\m^i$ (in Zariski topology).
Since $P$ is an inverse limit, $P$ is automatically $\m$-adically closed (see \ref{inverse limit criterion}).}

\begin{defnn}[\ref{AmAC defn}]
\AmACDefn
\end{defnn}

In November 2018 the first author along with Mihai Tibar (during the Research in Pairs program at Mathematisches Forschungsinstitut Oberwolfach) had proved a version of Gurjar's Theorem \ref{GHST}. In the result they looked at hypersurface sections involving classes of hypersurfaces with bounded Milnor number.
\\

This paper aims to extend this result to arbitrary formal varieties and hypersurface sections defined by A$\m$AC classes of functions. Furthermore, we also extend the result to work for hypersurfaces sections with non-isolated singularities. This goal is achieved by using the multiplicity of the singular locus instead of the Milnor number $\mu$.

We also extend the results to the analytic case (over $\bb{C}$) if the A$\m$AC Classes are finitely determined.

If $Y$ is an equidimensional closed subvariety of $\Spec(A)$ then $\S_{Y}$ denotes the singular locus of $Y$ considered as a scheme (see definition \ref{singular Definition}). We can now state the main result.

\newcommand{\AmACResult}{Let $X$ be an equidimensional closed subvariety of $\Spec(A)$. Let $ZD(X)$ denote the zero divisors of $\O_X$ in $A$. Let $P \subset A$ be an algebraic $\m$-adically closed class \st $ZD(X) \cap P = \emptyset$. Let $h_i \in P$ be an infinite sequence \st $\S_{X \cap H_i}$ is of dimension $d$ and the multiplicity of $\S_{X \cap H_i} \to \inf$ as $i \to \inf$. Then there exists $h \in P$ \st $\S_{X \cap H}$ is of dimension $\geq d+1$.}

\begin{thmn}[\ref{AmAC result}]
\AmACResult
\end{thmn}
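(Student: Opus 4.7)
The plan is to follow the Mori-style inverse-limit strategy indicated in the introduction, and produce the required $h$ as a point in the intersection of a descending chain of non-empty A$\m$AC subclasses of $P$. For each positive integer $m$, I would set
\[
V_m := \{\, h \in P : \dim \S_{X \cap H} \geq d+1 \text{ or } \mathrm{mult}(\S_{X \cap H}) \geq m \,\}.
\]
The hypothesis that $\mathrm{mult}(\S_{X \cap H_i}) \to \infty$ shows that $h_i \in V_m$ for all sufficiently large $i$, so every $V_m$ is non-empty, and clearly $V_1 \supseteq V_2 \supseteq \cdots$.

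The core step is to establish that each $V_m$ is itself A$\m$AC. Concretely, I would exhibit for every truncation level $j$ a constructible subset $V_{m,j} \subseteq A/\m^j$ whose inverse limit is $V_m$. The singular locus $\S_{X \cap H}$ is cut out by the vanishing of suitable Jacobian minors of a presentation of $X \cap H$ (see \ref{singular Definition}), while its dimension and Hilbert--Samuel multiplicity at the closed point are determined by finitely many coefficients of its Hilbert polynomial. Using this, I would prove an effective determinacy statement: the truth of ``$\dim \S_{X \cap H} \geq d+1$ or $\mathrm{mult}(\S_{X \cap H}) \geq m$'' depends only on $h$ modulo $\m^{N(m)}$ for some integer $N(m) = N(m,X,d)$, and the induced condition on $\bar h \in A/\m^{N(m)}$ is a Boolean combination of polynomial equations and inequations in the coefficients of $\bar h$, hence Zariski-constructible. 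Intersecting with the constructible truncations of $P$ preserves the A$\m$AC property. The assumption $ZD(X) \cap P = \emptyset$ enters here to guarantee that $X \cap H$ has the expected codimension one in $X$ for every $h \in P$, making the dimension arithmetic uniform across the family.

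Once $V_m$ is A$\m$AC, its non-emptiness follows from the inverse limit criterion (\ref{inverse limit criterion}) applied to inverse systems of non-empty constructible sets over an uncountable algebraically closed field. Viewing $\bigcap_m V_m$ as a single inverse system indexed by pairs $(m,j)$, the same criterion yields an element $h \in \bigcap_m V_m$. For such an $h$, if $\dim \S_{X \cap H} \leq d$ then $\mathrm{mult}(\S_{X \cap H})$ is a finite integer $M$, contradicting $h \in V_{M+1}$; hence $\dim \S_{X \cap H} \geq d+1$, as required.

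I expect the principal obstacle to be the effective-determinacy / constructibility statement in the middle paragraph: one must control how $\S_{X \cap H}$ (with its scheme structure) and its multiplicity vary with $h$, and produce a uniform $N(m)$ so that the condition can be detected modulo $\m^{N(m)}$. I anticipate this resting on Artin-approximation / Krull-intersection type bounds together with the good behavior of Jacobian and Fitting ideals under truncation, together with the standard fact that the Hilbert--Samuel multiplicity can be read off from any sufficiently high graded piece of the associated graded ring.
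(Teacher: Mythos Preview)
Your overall architecture is right and very close to the paper's, but the ``effective determinacy'' step is not true as stated. You claim that the condition ``$\dim \S_{X\cap H}\geq d+1$ or $\mathrm{mult}(\S_{X\cap H})\geq m$'' is determined by $h$ modulo $\m^{N(m)}$ for some single $N(m)$. This holds when $d=0$ (there the condition reduces to $\len(\O_{\S_{X\cap H}}/\m^{m})\geq m$, which is visibly determined at level $m+1$), but it fails for $d\geq 1$. The dimension and multiplicity are coefficients of the Hilbert--Samuel \emph{polynomial}, and while that polynomial is determined by finitely many of its values, the level at which the Hilbert--Samuel \emph{function} becomes polynomial is not uniform in $h$: a one-dimensional local ring with multiplicity $m-1$ and $\m$-torsion of length $t$ has $\chi(k)\geq mk$ for all $k\leq t$ (this is exactly Lemma~\ref{dim 1 torsion bound lemma}(5)), so at any prescribed truncation depth it is indistinguishable from one of multiplicity $\geq m$. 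No Artin-approximation or Krull-intersection bound will produce a uniform $N(m)$ here.

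The paper sidesteps this by never trying to detect the Hilbert polynomial. It works directly with Hilbert--Samuel \emph{values}, which are computable from truncations (Lemma~\ref{singular locus semicontinuity}). The key estimate is Lemma~\ref{dim bound lemma}(2): if $\dim\S_{X\cap H_i}=d$ and $\mathrm{mult}\geq i$, then $\len(\O_{\S_{X\cap H_i}}/\m^{k+1})\geq \binom{k+d+1}{d+1}$ for all $k<i$. These inequalities are precisely the defining conditions of the level-$i$ truncation $D_i(d+1)$ of the single A$\m$AC class $D(d+1)=\{h:\dim\S_{X\cap H}\geq d+1\}$ (Lemma~\ref{dimension AmAC}). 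Thus $\bar h_i\in P_i\cap f^{-1}(D_i(d+1))$ for each $i$, and the non-emptiness lemma gives an $h\in P\cap f^{-1}(D(d+1))$ directly (Lemma~\ref{AmAC lemma}). There is no need for the intermediate tower $V_m$.

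Your sets $V_m$ are in fact the paper's $M(d,m)\cap P$ from Example~\ref{advanced example}.8, and Lemma~\ref{multiplicity AmAC} does prove they are A$\m$AC---but via an inverse system whose constructible conditions genuinely strengthen with the truncation level (using Lemma~\ref{dim bound lemma}(3)), not via a single $N(m)$. With that correction your argument would go through and be essentially a repackaging of the paper's; the paper's route through $D(d+1)$ is simply the same idea with one A$\m$AC instead of a nested countable family.
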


Let us now put $P = \m-\m^2$ and take $X$ to be an irreducible variety not embedded in any hyperplane (to get rid of the hypothesis of zero divisors).

\newcommand{\HyperplaneSectionTheorem}{Let $X$ be an irreducible closed subvariety of $\Spec(A)$ and further assume $X$ is not embedded in any hyperplane. Let $h_i \in \m-\m^2$ be an infinite sequence \st $\S_{X \cap H_i}$ is of dimension $d$ and the multiplicity of $\S_{X \cap H_i} \to \inf$ as $i \to \inf$. Then there exists $h \in \m-\m^2$ \st $\S_{X \cap H}$ is of dimension $\geq d+1$.}

\begin{coron}[\ref{Hyperplane section theorem}]
\HyperplaneSectionTheorem
\end{coron}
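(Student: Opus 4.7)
The plan is to deduce this Corollary directly from Theorem \ref{AmAC result} by specializing to the class $P = \m - \m^2$. Since irreducibility implies equidimensionality, two things remain to check: (i) that $\m - \m^2$ is an A$\m$AC class in $A$, and (ii) that $ZD(X) \cap (\m - \m^2) = \emptyset$. Once both are established, the given sequence $(h_i) \subset \m - \m^2$ satisfies the hypotheses of Theorem \ref{AmAC result} verbatim, and the conclusion follows.

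For (i), the plan is to let $\phi_i \colon A \twoheadrightarrow A/\m^i$ be the canonical projection and set $P_i := \phi_i(\m - \m^2)$. For $i \leq 1$ this is a single point, and for $i \geq 2$ it is the set-theoretic difference $\m/\m^i \setminus \m^2/\m^i$ of two $\K$-linear subspaces of the finite-dimensional affine space $A/\m^i$, which is locally closed and in particular constructible in the Zariski topology. The transition maps $A/\m^i \to A/\m^{i-1}$ clearly carry $P_i$ into $P_{i-1}$, and an element $a \in A$ lies in $\varprojlim P_i$ iff its image in $A/\m^2$ is a nonzero element of $\m/\m^2$, which is exactly the condition $a \in \m - \m^2$. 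Hence $\m - \m^2 = \varprojlim P_i$ is an A$\m$AC class.

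For (ii), I would use that $X$, being an irreducible (reduced) subvariety, corresponds to a prime ideal $I \subset A$, so $A/I$ is a domain and $ZD(X) = I$ as a subset of $A$. The hypothesis that $X$ is not embedded in any hyperplane says precisely that no $h \in \m - \m^2$ belongs to $I$ (since the hyperplane $H = V(h)$ contains $X$ iff $h \in I$), equivalently $I \subset \m^2$. Therefore $ZD(X) \cap (\m - \m^2) = \emptyset$, as required.

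No genuine obstacle arises in this reduction: the argument is essentially a packaging statement, with all the substantive work done by Theorem \ref{AmAC result}. The only content specific to this Corollary is recognising $\m - \m^2$ as the prototypical A$\m$AC class and unpacking the hyperplane hypothesis into the elementary zero-divisor condition above.
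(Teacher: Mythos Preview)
Your proposal is correct and follows essentially the same approach as the paper: the paper's proof is a two-line reduction to Theorem \ref{AmAC result}, noting that $\m-\m^2$ is an A$\m$AC class (this is Example \ref{basic example}.2) and that irreducibility together with the non-embedding hypothesis forces $ZD(X)\cap(\m-\m^2)=\emptyset$. Your write-up simply spells out both verifications in more detail than the paper does.
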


\begin{rmk}
Other interesting examples of A$\m$AC which are suitable for Theorem \ref{AmAC result} include
\begin{itemize}
\item $\{h \in A \ |$ Milnor number of $h$ is constant\} (example \ref{basic example}.6)
\item $\{h \in A \ |$ Tjurina number of $h$ is constant\} (example \ref{basic example}.4)
\item $\{h \in A \ | \ \dim(\S_{H}) = 1$, the multiplicity and the $\m$-torsion of $\S_{H}$ are both constant\} (example \ref{advanced example}.9)
\end{itemize}

Finally, we note that A$\m$AC classes are closed under finite union and countable intersection (see Lemma \ref{basic lemma}). For example, $\{h \in A \ | \ \text{Milnor number of $h$ is bounded} \}$ can also be used in Theorem \ref{AmAC result}. 
\end{rmk}

We will now illustrate that Theorem \ref{AmAC result} is indeed a generalization of Gurjar's Theorem.
\\

Let $f \in \m^2 \subset A$ satisfying hypothesis of Gurjar's Theorem i.e. $\mu(A/\<f, h\>)$ is finite for all $h \in \m - \m^2$. Let $X$ be the subscheme defined by $f$.

Assume to the contrary that $\mu(X \cap H)$ is unbounded. Then from \cite{liu2018milnor}, we know that $\mu(X \cap H) \leq (\dim(X \cap H) + 1)\tau(X \cap H)$.
We now see that Tjurina numbers $\tau(X \cap H)$ must also be unbounded. 
But the Tjurina number is also the multiplicity of the zero dimensional singular locus.
So we can obtain a sequence of hyperplanes $h_i$ \st $\S_{X \cap H_i}$ is of dimension $0$ and the multiplicity of $\S_{X \cap H_i} \to \inf$ as $i \to \inf$. 

We now apply Corollary \ref{Hyperplane section theorem} to get a hyperplane $h$ \st $X \cap H$ has a non-isolated singularity. This is a contradiction, so $\mu(X \cap H)$ must be bound.
\\

We have also obtained a more specialized result for surfaces. 

\newcommand{\surfaceResult}{Let $X \subset \Spec(B) (= \Spec(A/I))$ be an equidimensional closed subvariety of dimension $2$. Let $ZD(X)$ denote the zero divisors of $\O_X$ in $B$. Let $P \subset B$ be an algebraic $\m$-adically closed class \st $ZD(X) \cap P = \emptyset$. Then there abstractly exist constants $e(P)$ and $t(P)$ \st for every hypersurface $h \in P$, multiplicity tuple of $\S_{H}$ is bounded by $(e(P),t(P))$ in the \textbf{lexicographic ordering} and there is a hypersurface whose singular locus achieves this maximum.}

\begin{thmn}[\ref{surface result}]
\surfaceResult
\end{thmn}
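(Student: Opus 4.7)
The plan is to apply Theorem \ref{AmAC result} twice to the surface $X$: first to bound the top-dimensional (one-dimensional) contribution to $\S_{X \cap H}$, and then, after restricting to the A$\m$AC subclass realizing this bound, to bound the residual zero-dimensional ($\m$-torsion plus isolated) contribution.

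\textbf{First step.} Since $\dim X = 2$ and the hypothesis $ZD(X) \cap P = \emptyset$ forces every $h \in P$ to be a non-zerodivisor on $\O_X$, the slice $X \cap H$ is purely one-dimensional, hence $\dim \S_{X \cap H} \leq 1$. If the top-dimensional multiplicity of $\S_{X \cap H}$ were unbounded as $h$ ranges over $P$, one could extract a sequence $h_i \in P$ with $\dim \S_{X \cap H_i} = 1$ and multiplicity tending to $\infty$; Theorem \ref{AmAC result} with $d = 1$ would then produce $h \in P$ with $\dim \S_{X \cap H} \geq 2$, contradicting $\dim(X \cap H) = 1$. Hence this multiplicity admits a finite supremum $e(P)$, attained on a nonempty A$\m$AC subclass by the inverse-limit nonemptiness property underlying the paper's framework.

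\textbf{Second step.} Let $Q \subset P$ be the A$\m$AC subclass of $h$ attaining $e(P)$. When $e(P) = 0$, every $\S_{X \cap H}$ is already zero-dimensional and a single application of Theorem \ref{AmAC result} with $d = 0$ bounds its length by some $t(P)$. When $e(P) \geq 1$, we must bound the length of the zero-dimensional part of $\S_{X \cap H}$ over $Q$; the idea is to package it as the top-dimensional term of an auxiliary A$\m$AC family (for instance $\O_{\S_{X \cap H}}$ modulo a Fitting ideal of its one-dimensional component), yielding a zero-dimensional scheme whose length captures the $\m$-torsion plus isolated contribution. If this length were unbounded over $Q$, Theorem \ref{AmAC result} applied with $d = 0$ to the auxiliary family would force the associated zero-dimensional scheme to acquire dimension $\geq 1$ for some $h \in Q$; unpacking this either violates $\dim \S_{X \cap H} \leq 1$ or contradicts the fixed value $e(P)$. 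Let $t(P)$ be the resulting supremum. Attainment of the lex-maximum $(e(P), t(P))$ by a single hypersurface then follows from closure of A$\m$AC classes under finite intersection (Lemma \ref{basic lemma}): intersecting $Q$ with the A$\m$AC subclass realizing $t(P)$ gives a nonempty A$\m$AC class of hypersurfaces achieving both coordinates simultaneously.

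The main obstacle is verifying that, modulo each $\m^i$, the top-dimensional multiplicity and the zero-dimensional length of $\S_{X \cap H}$ are constructible functions of $h$. This is what guarantees that $Q$ and the auxiliary zero-dimensional family in the second step are themselves A$\m$AC, so that Theorem \ref{AmAC result} can legitimately be reinvoked. The verification should follow from generic freeness and the constructibility of Hilbert polynomials and Fitting ideals in flat families, but translating these statements into the A$\m$AC framework and pairing them with the correct auxiliary schemes is the delicate point.
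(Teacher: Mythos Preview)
Your first step is correct and coincides with the paper's argument.

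The second step, however, has a genuine gap. Theorem \ref{AmAC result} is formulated for a \emph{fixed} equidimensional $X$ and concerns the schemes $\S_{X\cap H}$ as $h$ ranges over an A$\m$AC class; it does not accept an arbitrary ``auxiliary family'' of zero-dimensional schemes as input. The $\m$-torsion (or the quotient by a Fitting ideal) of $\O_{\S_{X\cap H}}$ is not of the form $\S_{X'\cap H}$ for any fixed $X'$, so the theorem cannot be invoked as you describe. Moreover, even granting some hypothetical reformulation, the conclusion you would obtain---that the auxiliary zero-dimensional scheme becomes one-dimensional for some $h$---does not obviously contradict anything: a one-dimensional scheme can acquire an arbitrarily thick embedded component along its one-dimensional support without its multiplicity changing, so neither $\dim\S_{X\cap H}\le 1$ nor the extremality of $e(P)$ is violated a priori. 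You flag the constructibility issue yourself, but the deeper problem is that the shape of Theorem \ref{AmAC result} is wrong for this purpose.

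The paper takes a different route that avoids any auxiliary family. The key input is Lemma \ref{dim 1 torsion bound lemma}(5): for a one-dimensional local ring of multiplicity $e$ and $\m$-torsion length $t$, one has $\len(R/\langle x^{k+1}\rangle)\ge (e+1)k$ for all $k\le t$ and generic $x$. Thus large torsion makes the Hilbert--Samuel-type growth of the \emph{entire} one-dimensional scheme $\S_{X\cap H}$ indistinguishable, up to level $t$, from that of a scheme of multiplicity $\ge e(P)+1$. The paper exploits this by giving an alternative inverse-limit presentation $M(1,e(P)+1)=D(2)\cup\varprojlim Q_r$, where the $Q_r$ encode exactly this growth condition; a sequence $h_i\in L:=P\cap M(1,e(P))$ with torsion $\ge i$ then has $\bar h_i\in Q_i\cap L_i$, and Lemma \ref{nonempty lemma} (not Theorem \ref{AmAC result}) produces an element of $L\cap M(1,e(P)+1)$, contradicting the maximality of $e(P)$. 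Finally, attainment of $(e(P),t(P))$ requires no further A$\m$AC intersection: both coordinates are non-negative integers, so a bounded supremum is automatically a maximum.
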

The multiplicity tuple of dimension $1$ local ring $R$ is defined as $$\text{(multiplicity of }R\text{ w.r.t. }\m \ , \m\text{-torsion in }R)$$
So the above result can also be understood as saying: ``From Theorem \ref{AmAC result} we already know that the multiplicity of singular locus of $X \cap H$ is bounded. But among the hypersurfaces of $P$ with maximum multiplicity, the amount of $\m$-torsion in $\O_{\S_H}$ is also bounded."
\\

We also use A$\m$AC classes to prove the following result:

\newcommand{\milnorTjurina}{Let $X \subset \Spec(A)$ be an isolated hypersurface singularity with Tjurina number $\tau$. Then there abstractly exists a constant $c>0$ \st $\mu(X) \leq c$. This $c$ depends only on $\tau$ and $\dim(X)$.}

\begin{thmn}[\ref{milnor tjurina}]
\milnorTjurina
\end{thmn}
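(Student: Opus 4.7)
The plan is to argue by contradiction using the inverse-limit/compactness properties of A$\m$AC classes, which matches the word ``abstractly'' in the statement. Fix $n = \dim(X)+1$ and work in $A = \K[\![x_1,\ldots,x_n]\!]$. Suppose that for this fixed pair $(\tau, n)$ no such bound $c$ exists. Then there is a sequence $f_i \in A$ of isolated hypersurface singularities with $\tau(f_i) = \tau$ and $\mu(f_i) \to \infty$.

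First I would use the A$\m$AC class $P_\tau := \{f \in A : \tau(f) = \tau\}$ (Example \ref{basic example}.4): concretely, for each $i > \tau$, the condition $\tau(f) = \tau$ is equivalent to the equality $\dim_\K A/(\<f, \partial f/\partial x_1,\ldots,\partial f/\partial x_n\> + \m^i) = \tau$, which is a rank condition on the coefficients of $f \bmod \m^{i+1}$ and hence defines a constructible subset at each finite level. The equivalence uses that an Artinian local quotient of length $\tau$ is already killed by $\m^\tau$ (handling the isolated case) and that Tjurina-infinite $f$ have Hilbert function growing past $\tau$ (handling the non-isolated case). Analogously, $Q_N := \{f \in A : \mu(f) \geq N\}$, with the convention that $\mu = \infty$ for non-isolated singularities, is A$\m$AC by applying the same template to the Jacobian ideal $\<\partial f/\partial x_1, \ldots, \partial f/\partial x_n\>$.

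By the contradiction hypothesis, $P_\tau \cap Q_N$ is non-empty for every $N$ (it contains a tail of the sequence $f_i$). Invoking the closure of A$\m$AC classes under countable intersection (Lemma \ref{basic lemma}) together with the inverse-limit non-emptiness principle available over the uncountable algebraically closed field $\K$ --- the Mori-style compactness highlighted in the introduction --- one obtains $f \in P_\tau \cap \bigcap_N Q_N$. This $f$ has $\tau(f) = \tau < \infty$ and is therefore an isolated singularity, yet $\mu(f) \geq N$ for every $N$ forces $\mu(f) = \infty$, i.e.\ $f$ is non-isolated. The contradiction yields the desired bound $c = c(\tau, n)$.

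The main obstacle is the non-emptiness of the countable nested intersection: it is not formal from the definition and genuinely uses that $\K$ is uncountable, via a Baire-type argument on the constructible sets at each finite level. I expect this to be packaged in the inverse limit criterion \ref{inverse limit criterion} and the basic closure properties of A$\m$AC classes. A secondary technical point is verifying that the constructible descriptions of $(P_\tau)_i$ and $(Q_N)_i$ really assemble into compatible inverse systems whose limits recover $P_\tau$ and $Q_N$; this is routine once one chooses the truncation level large enough (greater than $\tau$ and $N$ respectively).
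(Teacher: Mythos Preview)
Your proposal is correct and follows essentially the same approach as the paper: both argue by contradiction, take the A$\m$AC class $T(\tau)=P_\tau$ of fixed Tjurina number (Example \ref{basic example}.4 / Lemma \ref{Tjurina AmAC}), intersect it with the nested A$\m$AC classes $\{h:\mu(h)>r\}$ (your $Q_N$, the paper's $K(>r)$), and apply the non-emptiness Lemma \ref{nonempty lemma} to the countable intersection $T(\tau)\cap\bigcap_r K(>r)=T(\tau)\cap D(1)$ to obtain an element with finite $\tau$ but infinite $\mu$, a contradiction. The paper writes out the inverse system $D(1)=\ula{\lim}\,\bar K(>r)$ a bit more explicitly before invoking Lemma \ref{nonempty lemma}, but the content is identical.
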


In \cite{DimcaGreuel}, $\mu \leq \frac{4}{3}\tau$ was proposed for the case of plane curves. This was independently proven in \cite{2020minimal} and \cite{2019saitos} for the case of irreducible plane curves. In \cite{2019Almiron}, the bound for all plane curves was proven. Moreover, in \cite{2019Almiron}, it was proven that if the Durfee conjecture holds for a hypersurface $X$ of dimension 2, then $\mu \leq \frac{3}{2}\tau$.
\\

For the general case, an explicit bound of $\mu \leq (\dim(X)+1)\tau$ has been proven in \cite{liu2018milnor}.
In a later work, we have also extended the result \ref{milnor tjurina} to the case when $X$ is an isolated complete intersection singularity. (\cite{PU2023})
\\

The paper is organized as follows:

In Section 2, we prove essential results concerning the multiplicity and length of rings, as well as preliminary results necessary for working with the singular locus and proving the constructiblity of subsets of $A/\m^k$ in the Zariski topology.

In Section 3, we define and prove basic properties of A$\m$AC classes of functions, along with examples. We then utilize these examples to prove our main results (Theorems \ref{AmAC result} and \ref{Hyperplane section theorem}). We also use A$\m$AC to prove that the Milnor number of a hypersurface is bounded with a bound dependent only on its dimension and Tjurina number (Theorem \ref{milnor tjurina}).

In Section 4, we specialize our A$\m$AC to curve singularities and utilize them to prove a more specialized result for surfaces (Theorem  \ref{surface result}).

In Section 5, we define finitely determined A$\m$AC. We prove a version (Theorem \ref{AmAC result}) for analytic varieties by restricting ourselves to finitely determined A$\m$AC which allows us to apply Artin's approximation theorem. We also prove a converse of the Corollary \ref{Hyperplane section theorem}.


\section{Preliminaries}

Let $R$ be a Noetherian ring of dimension $d$ with a maximal ideal $\m$. Let $I$ be an $\m$-primary ideal. We will   denote the length of a module $M$ as $\len(M)$.

\begin{defn}\label{multi definition}
Define the Hilbert-Samuel function $\chi_{R}^{I}(n) := \len(R/I^{n+1})$.
If the maximal ideal is clear from the context and $I=\m$, then we will drop the $\m$ in the notation.
We take the definition of multiplicity as the following 
$$e(R,I) = \lim_{n\to\inf} d! \frac{\len(R/I^{n+1})}{n^d} $$
\end{defn}

We will use the following fundamental result from dimension theory.
\begin{lemma}\label{dimension}
Let $R$ be a Noetherian local ring with Krull dimension $d$ and maximal ideal $\m$. Let $I$ be an $\m$-primary ideal.
Then for sufficiently large $n$ we have $\chi^{I}_{R}(n) = p(n)$, where $p(n)$ is a rational polynomial of degree $d$ (called the Hilbert-Samuel polynomial). 
\end{lemma}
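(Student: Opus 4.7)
The plan is to follow the classical dimension-theoretic argument via the associated graded ring. First, I would form $\mathrm{gr}_I(R) := \bigoplus_{n \geq 0} I^n/I^{n+1}$. Since $R$ is Noetherian, one can write $I = (a_1, \ldots, a_s)$, and the images $\bar a_i \in I/I^2$ generate $\mathrm{gr}_I(R)$ as a graded algebra over $R/I$. Because $I$ is $\m$-primary, $R/I$ is Artinian, so each graded piece $I^n/I^{n+1}$ has finite length over $R/I$ and hence over $R$.

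Next I would invoke the Hilbert--Serre theorem applied to the finitely generated graded module $\mathrm{gr}_I(R)$ over the graded polynomial ring $(R/I)[T_1,\ldots,T_s]$: this yields that $n \mapsto \len(I^n/I^{n+1})$ agrees with a rational polynomial $q(n)$ for $n$ sufficiently large. Summing via the identity $\chi^I_R(n) = \sum_{k=0}^{n} \len(I^k/I^{k+1})$ and a standard finite-difference calculation then produces a polynomial $p(n) \in \mathbb{Q}[n]$ of degree $\deg q + 1$ that agrees with $\chi^I_R(n)$ eventually.

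The main obstacle, and the real content of the lemma, is identifying $\deg p$ with the Krull dimension $d$. For this I would proceed in two steps: first, show that $\deg p$ is independent of the choice of $\m$-primary $I$, by sandwiching $\m^a \subseteq I \subseteq \m$ for suitable $a$ and comparing the two Hilbert--Samuel functions term-by-term; second, run an induction on $d$ in which, for $d > 0$, one picks $x \in \m$ avoiding every minimal prime of maximal dimension so that $\dim R/(x) = d-1$, and then uses the Artin--Rees lemma to control $(x) \cap I^n$ inside the exact sequence relating $R/I^n$ and $R/(I^n,x)$, forcing the Hilbert--Samuel polynomial of $R/(x)$ to have degree exactly $\deg p - 1$. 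The base case $d = 0$ is immediate since $R$ is then Artinian and $\chi^I_R(n)$ is eventually constant.
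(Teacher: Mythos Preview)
Your sketch is correct and is the classical dimension-theory argument; the paper itself gives no proof at all, simply citing \cite[Theorem 13.4]{MatsumuraCRT}. What you have outlined is essentially the proof found in that reference, so there is nothing to compare.
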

\begin{proof}\cite[Theorem 13.4]{MatsumuraCRT}.\end{proof}

\begin{lemma}\label{poly degree}
Let q(x) be a rational polynomial of degree $d$ \st $q(x) > 0$ for sufficiently large $x$ i.e it has a positive highest coefficient. Let p(x) be another rational polynomial \st $p(x) \geq q(x)$ for infinitely many positive integers. Then $\deg(p)\geq \deg(q)=d$ and $p(x) > 0$ for $x$ sufficiently large.

Furthermore, if $\deg(p) = \deg(q)$, then the highest coefficient of $p(x)$ is greater than or equal to the highest coefficient of $q$.
\end{lemma}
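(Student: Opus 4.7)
The plan is to reduce everything to the classical fact that a nonzero rational polynomial with positive leading coefficient is strictly positive for all sufficiently large $x$ (and hence for all but finitely many positive integers). I will apply this to $r(x) := q(x) - p(x)$: the strategy is to show that any violation of the claimed conclusions would force $r$ to have positive leading coefficient, so $r(x) > 0$ eventually, which would contradict the hypothesis that $p(x) \geq q(x)$ for infinitely many positive integers.

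The first step is to prove $\deg(p) \geq d$. If instead $\deg(p) < d$, then $r(x) = q(x) - p(x)$ has the same leading term as $q(x)$, a positive multiple of $x^d$; so $r(x) > 0$ for all large $x$, i.e., $p(x) < q(x)$ for all large $x$, contradicting the hypothesis. Next, assuming $\deg(p) = d$, I handle the "furthermore" clause: if the leading coefficient of $p$ were strictly less than that of $q$, then $r(x)$ would again have positive leading coefficient and degree $d$, giving the same contradiction. This proves the leading-coefficient comparison whenever $\deg(p) = d$.

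Finally, for the assertion that $p(x) > 0$ for large $x$, I argue that the leading coefficient of $p$ must itself be positive. In the case $\deg(p) = d$ this is immediate from the previous step, since that leading coefficient is at least the (positive) leading coefficient of $q$. In the case $\deg(p) > d$, a negative leading coefficient of $p$ would make $r(x)$ a polynomial of degree $\deg(p)$ with positive leading coefficient, once more giving $r(x) > 0$ eventually and contradicting the hypothesis; hence the leading coefficient of $p$ is positive here too. I do not foresee a real obstacle: the lemma is elementary, and the only care needed is to organize the case split on $\deg(p)$ versus $d$ and on the sign of the leading coefficient so that each violation yields the same contradiction from $r(x) > 0$ eventually.
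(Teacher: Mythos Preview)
Your proposal is correct and follows essentially the same contradiction argument as the paper: assume the conclusion fails, deduce that $q(x)-p(x)$ has positive leading coefficient, and contradict the hypothesis. You are in fact slightly more thorough than the paper, which does not explicitly spell out the case $\deg(p)>d$ for the claim that $p(x)>0$ eventually; your case split handles this cleanly.
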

\begin{proof}
We prove this by contradiction. Let $\deg(p) < d$. Then for sufficiently large $n$, $p(n) < q(n)$, which is a contradiction to the assumption. 

Similarly, if $\deg(p)=\deg(q)$ and the highest coefficient of $p(x)$ is less than the highest coefficient of $q(x)$, then again for sufficiently large $n$, we have the contradiction $p(n) < q(n)$.
\end{proof}

The next Lemma will be used in the inductive step of the Lemma \ref{dim bound lemma}. 

\begin{lemma}\label{inductive lemma}
Let $R$ be a Noetherian ring of dimension $d>0$ with a maximal ideal $\m$ \st $R/\m$ is infinite.
Then there exist two graded rings $\~{R}$ of dimension $d$ and $\-{R}$ of dimension $d-1$ \st

$$e(R,\m) = e(\~{R},\~{\m}) = e(\-{R},\-{\m})$$
$$ \chi_{R}^{\m}(n) \geq \chi_{\~{R}}^{\~{\m}}(n)$$
$$ \chi_{\~R}^{\~{\m}}(n) = \chi_{\~{R}}^{\~{\m}}(n-1) + \chi_{\-{R}}^{\-{\m}}(n) $$

Here $\~{\m}$ and $\-{\m}$ are the irrelevant ideals of the graded rings $\~{R}$ and $\-{R}$ respectively.

\end{lemma}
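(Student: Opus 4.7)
The plan is to construct $\tilde R$ as (a mild modification of) the associated graded ring $G := \mathrm{gr}_{\m}(R) = \bigoplus_{n\ge 0}\m^{n}/\m^{n+1}$, and to take $\bar R$ as the quotient of $\tilde R$ by a carefully chosen degree-one nonzerodivisor. Two elementary identities drive the whole argument: first, because $G$ is generated in degree one over $G_0 = R/\m$, the irrelevant ideal satisfies $G_{+}^{n+1} = \bigoplus_{k\ge n+1}\m^{k}/\m^{k+1}$; second, for a degree-one nonzerodivisor $x$ in such a graded ring, $(x) \cap G_{+}^{n+1} = x \cdot G_{+}^{n}$, producing the clean short exact sequence $0 \to \tilde R(-1) \xrightarrow{\cdot x} \tilde R \to \bar R \to 0$ responsible for the stated recurrence.

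First I would verify $\chi_{G}^{G_{+}}(n) = \sum_{k=0}^{n}\ell(\m^{k}/\m^{k+1}) = \chi_{R}^{\m}(n)$, which already gives $\dim G = d$ and $e(G, G_{+}) = e(R, \m)$. Unfortunately, $G$ may admit $G_{+}$-torsion, i.e., an embedded associated prime at the irrelevant maximal ideal, which would obstruct the existence of a degree-one nonzerodivisor. I therefore set $\tilde R := G/H$, where $H := H^{0}_{G_{+}}(G)$ is the $G_{+}$-torsion submodule. This $H$ is a finitely generated graded module annihilated by some power $G_{+}^{N}$, hence has finite length. Because $H \cap G_{+}^{n+1} = 0$ for $n \ge N$, the quotient satisfies $\chi_{\tilde R}^{\tilde\m}(n) = \chi_{R}^{\m}(n) - \ell(H)$ for large $n$, which yields both $\chi_{R}^{\m}(n) \ge \chi_{\tilde R}^{\tilde\m}(n)$ and the preservation of dimension and multiplicity.

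Next I would pick $x \in \tilde R_{1} = \m/\m^{2}$ to be a nonzerodivisor on $\tilde R$. After killing the $G_{+}$-torsion, no associated prime of $\tilde R$ equals the irrelevant maximal ideal $\tilde\m$, so each associated prime meets $\tilde R_{1}$ in a proper $(R/\m)$-subspace; since $R/\m$ is infinite, a finite union of proper subspaces cannot cover $\tilde R_{1}$ and a suitable $x$ exists. Setting $\bar R := \tilde R/(x)$ produces a graded ring of dimension $d-1$. From the short exact sequence above, combined with $(x) \cap \tilde\m^{n+1} = x\,\tilde\m^{n}$, one computes $\ell(\bar R/\bar\m^{n+1}) = \ell(\tilde R/\tilde\m^{n+1}) - \ell(\tilde R/\tilde\m^{n})$, which rearranges to the required recurrence; the equality $e(\bar R, \bar\m) = e(\tilde R, \tilde\m)$ then follows by comparing leading coefficients of $\chi_{\tilde R}^{\tilde\m}$ and its discrete derivative (which is a polynomial of degree $d-1$ with leading coefficient $e(\tilde R, \tilde\m)/(d-1)!$).

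The main obstacle is securing the degree-one nonzerodivisor $x$. The naive choice $\tilde R = G$ fails precisely when $G_{+}$ is an embedded associated prime of $G$, which is why the $G_{+}$-torsion must be stripped off first; this is also the origin of the weak inequality (rather than equality) between $\chi_{R}^{\m}$ and $\chi_{\tilde R}^{\tilde\m}$ recorded in the statement. The infinite-residue-field hypothesis enters only through the prime-avoidance argument on the $(R/\m)$-vector space $\tilde R_{1}$, in the standard way.
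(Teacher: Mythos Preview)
Your proof is correct and follows essentially the same route as the paper: pass to the associated graded ring, kill a suitable torsion ideal to force a degree-one nonzerodivisor, and then derive the recurrence from $(x)\cap\tilde\m^{\,n+1}=x\,\tilde\m^{\,n}$. The only difference is cosmetic: the paper first picks $x\in R_1$ as part of a system of parameters and then kills the $x$-torsion $T_x=\{a:x^k a=0\text{ for some }k\}$ (which is supported in $V(x)$, hence of dimension $<d$), whereas you kill the full $G_+$-torsion $H^0_{G_+}(G)$ first and then invoke graded prime avoidance over the infinite residue field to find $x$. Your torsion ideal is smaller (finite length rather than merely lower-dimensional), which makes the equality $e(R,\m)=e(\tilde R,\tilde\m)$ slightly more transparent, but otherwise the two arguments are interchangeable.
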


\begin{proof}
Let $gr_{\m}(R)$ denote the graded algebra $R/\m \oplus \m/\m^2 \oplus \cdots$ with the standard grading. (which we note is generated as an $R/\m$-algebra by the first grade $\m/\m^2$).

Since the Hilbert-Samuel function of $R$ and $gr_{\m}(R)$ are equal, without loss of generality, $R = gr_{\m}(R) = R_0 \oplus R_1 \oplus \cdots $. Therefore, $R$ is a graded $R/\m$-algebra generated by $R_1$.

Let $x \in R_1$ be a part of a system of parameters so $\dim(R/\<x\>) = d-1$.

Define $T_{x} := \{a \in R \ |\ \exists k \text{ \st } x^{k} a = 0\}$, which is a graded ideal (as $x^k$ are homogeneous elements).
Define $\~{R} := R/T_{x}$. This is a graded ring and $\~{x}$ (image of $x$ in $\~{R}$) is a nonzero divisor.
Since $\~{R}$ is a quotient of $R$, we get $\chi_R(n) \geq \chi_{\~{R}}(n)$. 
Since $\Supp(T_{x}) \subset V(x)$ is of dimension strictly less than $d$, $e(R,\m) = e(\~{R},\~{\m})$.

Define $\-{R} := \~{R}/\<\~{x}\>$. Since $\~{x}$ is a nonzero divisor, it is also a superficial element in $\~{R}$ (As in \cite{Sam51}). Now by \cite[pg 188, Application to hibert function]{Bondil2005}, we see that the equality we need does indeed hold for Hilbert functions, which we could then use to derive our result. But for the sake of completion we instead derive it here.

Since $\~{x}$ is a nonzero divisor, $\<\~{x}\>$ is isomorphic to $\~{R}$, as $\~{R}$ modules, and $\dim(\-{R}) = \dim(R) - 1$. 

\begin{align} \nonumber
\begin{split}
\len(\frac{\-{R}}{\m^{r+1}}) = \len(\frac{\~{R}}{\<\~{x}\>+\m^{r+1}}) & = \len(\frac{\~{R}}{\m^{r+1}}) - \len(\frac{\<\~{x}\>+\m^{r+1}}{\m^{r+1}})\\
& = \len(\frac{\~{R}}{\m^{r+1}}) - \len(\frac{\<\~{x}\>}{\<\~{x}\> \cap \m^{r+1}})\\
& = \len(\frac{\~{R}}{\m^{r+1}}) - [\len(\frac{\<\~{x}\>}{\~{x}\m^{r}}) - \len(\frac{\<\~{x}\> \cap \m^{r+1}}{\~{x}\m^{r}})]
\end{split}
\end{align}
Since $\~{R}$ is a graded ring and $\~{x}$ is a nonzero divisor (and a homogeneous element of degree 1), $\<\~{x}\> \cap \~{R}_{r+1} = \<\~{x}\>_{r+1} = \~{x}\~{R}_{r}$ (where $\<\~{x}\>_{r+1}$ is the $(r+1)^{th}$ graded piece of ideal $\<\~{x}\>$). Then we have 
$$\<\~{x}\> \cap \m^{r+1} = \<\~{x}\> \cap (R_{r+1} \oplus R_{r+2} \oplus \cdots) = \~{x}R_{r} \oplus \~{x}R_{r+1} \oplus \cdots = \~{x}\m^{r}$$

Also we see that $R/\m^{r} \xra{\cross x} \<x\>/x\m^{r}$ is an isomorphism (as $\<\~{x}\> \iso \~{R}$). So we get,

\[\chi_{\-{R}}(r) = \chi_{\~{R}}(r) - \chi_{\~{R}}(r-1).\]

Using the definition of multiplicity (or \cite[Theorem 14.11]{MatsumuraCRT}) $$e(R,\m) = e(\~{R},\~{\m}) = e(\-{R},\-{\m})$$
\end{proof}

\begin{rmk}
The inequality (1) and (2) of the following Lemma are used in the proof of the main Theorems, while (3) will be used only in Section 5 to give examples (and hence can be safely skipped).
\end{rmk}

\begin{lemma}\label{dim bound lemma}
Let $R$ be a Noetherian ring of dimension $d$ with a maximal ideal $\m$ \st $R/\m$ is infinite. Then 
\begin{equation}\chi_R^{\m}(n) \geq {n+d\choose d}\end{equation}
Let $e := e(R,\m)$. Then we can obtain better bounds as follows:
\\

For $n \leq e-1$,
\begin{equation}\chi_R^{\m}(n) \geq {n+d+1\choose d+1} \end{equation} 
For $n \geq e - 1$,
\begin{equation}\chi_R^{\m}(n) \geq \sum_{i=0}^{\min(e,d)} (-1)^{i} {e \choose i+1}{n+d-i\choose d-i} = e{n+d\choose d} + \text{lower order terms (of $n$)} \end{equation}
\end{lemma}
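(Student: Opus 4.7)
The plan is to establish all three inequalities simultaneously by induction on $d = \dim(R)$, using Lemma \ref{inductive lemma} to descend to dimension $d-1$ at each stage. Passing from $R$ to the auxiliary graded ring $\~{R}$ preserves both $d$ and $e$, and the inequality $\chi_R(n) \geq \chi_{\~{R}}(n)$ means it suffices to bound $\chi_{\~{R}}$. The recursion $\chi_{\~{R}}(n) = \chi_{\~{R}}(n-1) + \chi_{\-{R}}(n)$, together with $\chi_{\~{R}}(0) = \chi_{\-{R}}(0) = 1$, telescopes to
\[
\chi_{\~{R}}(n) = \sum_{k=0}^{n} \chi_{\-{R}}(k),
\]
so any lower bound on $\chi_{\-{R}}$ (which has dimension $d-1$ and the same multiplicity $e$) propagates to $\chi_R$ by summation.

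The base case $d = 0$ is immediate: $R$ is Artinian local with $\len(R) = e$, and Nakayama's lemma forces $R \supsetneq \m \supsetneq \m^2 \supsetneq \cdots$ to strictly decrease until it hits $0$, giving $\chi_R(n) \geq \min(n+1, e)$. For $d = 0$ all three right-hand sides collapse: (1) reduces to $1$, (2) reduces to $n+1$ (in the range $n \leq e-1$), and (3) (with $\min(e,0) = 0$) retains only the $i = 0$ term, namely $e$. All three then follow.

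For the inductive step, (1) follows from the hockey-stick identity $\sum_{k=0}^{n} \binom{k+d-1}{d-1} = \binom{n+d}{d}$ applied to the inductive bound $\chi_{\-{R}}(k) \geq \binom{k+d-1}{d-1}$. For (2), in the range $n \leq e - 1$ every $k \leq n$ also satisfies $k \leq e-1$, so applying the sharper inductive bound (2) to each $\chi_{\-{R}}(k)$ and summing via hockey-stick yields $\binom{n+d+1}{d+1}$. For (3), in the range $n \geq e - 1$, split the telescoped sum at $k = e - 1$, using bound (2) in the lower block $0 \leq k \leq e - 1$ and bound (3) in the upper block $e \leq k \leq n$; swap the order of summation in the upper block and apply hockey-stick in $k$ to each binomial $\binom{k + d - 1 - i}{d - 1 - i}$.

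The principal obstacle is the binomial bookkeeping in (3): after swapping and applying hockey-stick, boundary terms of the form $\binom{e + d - 1 - i}{d - i}$ appear and must be cancelled against the contribution $\binom{e+d}{d+1}$ from the lower block, and the upper limit on $i$ must be extended correctly from $\min(e, d-1)$ to $\min(e, d)$ (the two differ only when $e \geq d$, in which case a new extremal term appears). This is handled by Pascal's identity $\binom{e}{i+1} = \binom{e-1}{i} + \binom{e-1}{i+1}$ and a shift of summation index. Finally, the rewriting of the alternating sum in (3) as $e \binom{n+d}{d}$ plus lower order terms in $n$ is a polynomial identity obtained by reading off the leading coefficient, independent of the induction.
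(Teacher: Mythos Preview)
Your proposal is correct and, for parts (2) and (3), amounts to a reorganization of the paper's argument rather than a different one. The paper also reduces via Lemma \ref{inductive lemma} and inducts; for (3) it phrases the step as a double induction on $(n,d)$, starting the $n$-induction at the base $n=e-1$ (where it invokes (2) and the same binomial identity you need for the boundary cancellation), then passing from $n-1$ to $n$ via $\chi_{\~{R}}(n)=\chi_{\~{R}}(n-1)+\chi_{\-{R}}(n)$. Your single induction on $d$ with the telescoped sum split at $k=e-1$ is just the closed form of that $n$-recursion, and the identity you flag for the boundary terms is exactly the paper's identity~$(*)$ (the vanishing of the $x^{d+1}$ coefficient in $(1+x)^{-e}(1+x)^e$).

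The one genuine difference is part (1): you derive it inductively as the $e=1$ specialization of the same scheme, whereas the paper gives an independent geometric proof via Noether normalization on $\mathrm{Proj}(gr_{\m}R)$, obtaining an injective graded map $(R/\m)[x_1,\dots,x_d]\hookrightarrow gr_{\m}R$ and comparing lengths gradewise. The paper itself notes that your route works (``(1) is a special case of (3) at $e=1$''); its geometric argument is shorter and conceptually cleaner for (1) alone, while yours is more uniform across the three statements.
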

Note that ${n+d\choose d}$ is the Hilbert-Samuel polynomial of a regular local ring of dimension $d$.

\begin{proof}
It can be easily seen that (1) is a special case of (3) at $e = 1$, but we give a more geometric proof. A proof of (1) can be found in \cite[Corollaire 2 pg.68]{BourbakiCA8-9}.

Since the Hilbert-Samuel function of $R$ and $gr_{\m}(R)$ are equal, without loss of generality, we may assume that $R = R_0 \oplus R_1 \oplus \cdots $ is a graded ring generated by $R_1$.

Now apply Noether normalization on $\text{Proj} (R)$. 
This gives us a graded ring homomorphism from $A = \frac{R}{\m}[x_1,\ldots ,x_d] \to R$, which is also an integral and dominant map. In particular, the map is injective. So 
$$ {n+d\choose d} = \len\bigg(\frac{A}{m^{n+1}}\bigg) = \len(A_0 \oplus A_1 \oplus \cdots \oplus A_n) \leq \len(R_0 \oplus R_1 \oplus \cdots \oplus R_n) = \len\bigg(\frac{R}{\m^{n+1}}\bigg)$$

We prove (2) using induction on dimension.

\textbf{Base case :} $\dim(R)=0$.

We see that if $\m^{r}/\m^{r+1} = 0$ then by Nakayama Lemma, $\m^{r} = 0$ which implies $\len(R/\m^{r})$ is a strictly monotonic function of $r$ until $\len(R/\m^{r}) = e$. $$\chi_R^{\m}(n) \geq {n+0+1\choose 0+1} = n+1 \text{ (if } n < e \text{)}$$

Now we prove the \textbf{inductive step}, with $\dim(R) = d$. 
We use Lemma \ref{inductive lemma} to get, $\~{R}$ and $\-{R}$ \st $\-{R}$ has dimension $d-1$ and multiplicity $e$.
\begin{align} \nonumber
\begin{split}
\chi_{R}^{\m}(n) \geq \chi_{\~{R}}^{\~{\m}}(n) &= \sum_{r=0}^n \chi_{\-{R}}^{\-{\m}}(r)\\
&\geq \sum_{r=0}^n {r+d\choose d} \quad (\text{if} \  n < e) \quad \text{(using induction on $\-{R}$)} \\
& = {n+d+1\choose d+1} \quad (\text{if} \  n < e)
\end{split}
\end{align}

We prove (3) using double induction on $n$ and dimension $d$. Firstly, we note that the coefficient of $x^{d+1}$ in the product of the power series $(1+x)^{-e}$ and $(1+x)^e$ is $0$ (as dimension $=d\geq 0$). Therefore

\begin{equation} \tag{*} \sum_{i=0}^{\min(e,d+1)} (-1)^{i} {e \choose i}{(e-1)+d+1-i\choose d+1-i} = 0 \end{equation}

\textbf{Base Case :} $n= e-1$. Using (2)
\begin{align} \nonumber
\begin{split}
\chi_{\~{R}}^{\~{\m}}(e-1) & \geq {(e-1)+d+1\choose d+1}  \\
& = \sum_{i=0}^{\min(e,d)} (-1)^{i} {e \choose i+1}{(e-1)+d-i\choose d-i} \text{ (using *))}
\end{split}
\end{align}
\textbf{Inductive step:} 
We assume the inequality is true for all rings of dimension $< d$, and for rings of dimension $= d$, the inequality is true until $n-1$.
Using Lemma \ref{inductive lemma} construct $\~{R}$, $\-{R}$ as before.
Since $\dim(\-{R}) = d-1$ by the induction hypothesis, the inequality holds for all $n \geq e-1$. Since $\dim(\~{R}) = d$ the inequality holds until $n-1$.
\begin{align} \nonumber
\begin{split}
\chi_{R}^{\m}(n) \geq \chi_{\~{R}}^{\~{\m}}(n) &= \chi_{\~{R}}^{\~{\m}}(n-1) + \chi_{\-{R}}^{\-{\m}}(n) \\ 
&\geq \sum_{i=1}^{d} (-1)^{i} {e \choose i+1}{n-1+d-i\choose d-i} + \sum_{i=1}^{d-1} (-1)^{i} {e \choose i+1}{n+d-1-i\choose d-i-1}\\
&= \sum_{i=1}^{d-1} (-1)^{i} {e \choose i+1}\Bigg({n+d-i-1\choose d-i} + {n+d-i-1\choose d-i-1}\Bigg) + (-1)^{d} {e \choose d+1}{n-1\choose 0}\\
&= \sum_{i=1}^{d-1} (-1)^{i} {e \choose i+1}{n+d-i\choose d-i} + (-1)^{d} {e \choose d+1}.1\\
&= \sum_{i=1}^{d} (-1)^{i} {e \choose i+1}{n+d-i\choose d-i}
\end{split}
\end{align}
(Here we have assumed that $d \leq e$, the proof is similar for $e<d$).
\end{proof}

Fix $\K$ to be an uncountable algebraically closed field (eg. $\bb{C}$, $\-{\bb{Q}_p}$, $\-{\bb{F}_p(\!(t)\!)}$).
Fix $A = \K[\![x_1,\dots,x_n]\!]$ and fix $\m$ to be its maximal ideal.

\begin{defn}\label{singular Definition}
Let $X$ be a closed subvariety of $\Spec(A)$ and $I(X) = \<f_1,f_2,\dots,f_k\>$ be the ideal defining $X$. Define $\S^{\a}_{X}$ as the scheme of ``all points $p \in X$ \st $\dim(T_p(X)) > \a$", whose scheme structure is given as follows:

$$\S^{\a}_{X} := X \cap \Spec\frac{A}{J_{n-\a}(X)} = \Spec\frac{A}{I(X)+J_{n-\a}(X)}$$
where $J_{n-\a}(X)$ is the ideal generated by $(n-\a) \cross (n-\a)$ minors of the Jacobian matrix $(\frac{\doe f_i}{\doe x_j})$.

As seen in \cite[4.D]{LooijengaICIS}, $J_{n-\a}(X)$ corresponds to the Fitting ideal of the module $\Th_f := Coker(A^k \xra{Jac(f_i)} A^n)$, so the scheme $\S^{\a}_X$ does not depend on the choice of generators of the ideal.

If $X$ is equidimensional of dimension $\a$, then we denote the singular locus of $X$ as a scheme by $\S_{X}$ and is defined as follows:
$$\S_{X} := \S^{\a}_{X} = X \cap \Spec\bigg(\frac{A}{J_{n-\a}(X)}\bigg) = \Spec\bigg(\frac{A}{I(X)+J_{n-\a}(X)}\bigg)$$
As seen in \cite[4.A]{LooijengaICIS}, this scheme corresponds exactly to the singular locus of $X$.
\end{defn}

The next two results are semi-continuity results which will be used throughout this paper at various places to prove that certain sets are Zariski constructible.

\begin{lemma}\label{basic semicontinuity}
Let $B = \K[\![x_1,\ldots,x_n]\!]/I$ and $\m$ be its maximal ideal. Let $I_1,\dots,I_k$ be $\m$-primary ideals and $\~{I} := I_1+\cdots+I_k$. Define 
\[\len_{\~{I}} : B/I_1 \cross \cdots \cross B/I_k \to \bb{Z}_{\geq 0}\]
\[\len_{\~{I}}(y_1,\dots,y_k) = \len(\frac{B}{\~{I}+\<y_1,\dots,y_k\>})\]
Then $\len_{\~{I}}$ is upper semi-continuous, when $B/I_1 \cross \cdots \cross B/I_k$ is given the Zariski topology.
\end{lemma}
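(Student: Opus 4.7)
The plan is to reduce the statement to the standard fact that the rank of a linear map between finite-dimensional vector spaces is lower semi-continuous in the map (equivalently, that the cokernel dimension is upper semi-continuous).

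First I would set up the finite-dimensional picture. Because each $I_j$ is $\m$-primary, $B/I_j$ is an Artinian local ring, hence a finite-dimensional $\K$-vector space; the product $V := B/I_1 \times \cdots \times B/I_k$ is then identified with an affine space $\mathbb{A}^N_\K$ in the Zariski topology. Similarly, $\tilde{I}$ is $\m$-primary (it contains every $I_j$), so $M := B/\tilde{I}$ is a finite-dimensional $\K$-algebra. Since $I_j \subset \tilde{I}$, there is a natural $\K$-linear projection $B/I_j \twoheadrightarrow M$, and I write $\bar{y}_j$ for the image of $y_j$ under this map. The key identification is
\[
\len_{\tilde{I}}(y_1,\dots,y_k) = \dim_\K\!\bigl(M/\langle \bar{y}_1,\dots,\bar{y}_k\rangle M\bigr),
\]
because $\tilde{I} + \langle y_1,\dots,y_k\rangle$ descends inside $B/\tilde{I}$ to the ideal generated by the $\bar{y}_j$.

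Next I would package the ideal as the image of a single linear map. For a fixed tuple $(y_1,\dots,y_k)$ define
\[
T_{(y_1,\dots,y_k)} \colon M^k \longrightarrow M, \qquad (m_1,\dots,m_k)\longmapsto \sum_{j=1}^k \bar{y}_j\, m_j.
\]
Its image is precisely $\langle \bar{y}_1,\dots,\bar{y}_k\rangle M$, so the quantity we care about is $\dim_\K M - \mathrm{rank}(T_{(y_1,\dots,y_k)})$. Because multiplication in $M$ is $\K$-bilinear, the assignment $(y_1,\dots,y_k)\mapsto T_{(y_1,\dots,y_k)}$ is itself $\K$-linear, hence a morphism of $\K$-varieties from $V$ to $\mathrm{Hom}_\K(M^k,M)$.

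Finally, I would invoke semi-continuity of rank: choose bases of $M^k$ and $M$, so that $T_{(y_1,\dots,y_k)}$ is represented by a matrix whose entries are linear functions of the coordinates of $(y_1,\dots,y_k)$. The locus $\{T : \mathrm{rank}(T)\geq r\}$ is Zariski open, cut out by non-vanishing of the $r\times r$ minors. Therefore $\{(y_1,\dots,y_k) : \mathrm{rank}(T_{(y_1,\dots,y_k)}) \geq r\}$ is open in $V$, which means $\{(y_1,\dots,y_k) : \len_{\tilde{I}}(y_1,\dots,y_k) \geq N\}$ is closed for every $N$. This is exactly upper semi-continuity. There is no real obstacle here; the only point needing care is making sure the descent to $M$ is what lets us work inside one fixed finite-dimensional vector space, after which the argument is the classical semi-continuity of rank.
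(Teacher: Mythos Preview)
Your proposal is correct and follows essentially the same route as the paper: both reduce to the lower semi-continuity of the rank of the multiplication map $M^k \to M$, $(m_1,\dots,m_k)\mapsto \sum_j \bar{y}_j m_j$, where $M = B/\tilde{I}$. Your write-up is a bit more explicit about the passage through the projections $B/I_j \to M$ and about why the assignment $(y_1,\dots,y_k)\mapsto T_{(y_1,\dots,y_k)}$ is a morphism of varieties, but the argument is the same.
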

\begin{proof}
\[\len\bigg(\frac{B}{\~{I}+\<y_1,\dots,y_k\>}\bigg) = \len\bigg(\frac{B}{\~{I}}\bigg) -  \len\bigg(\frac{\~{I}+\<y_1,\dots,y_k\>}{\~{I}}\bigg)\]
So it is enough to prove that that $\len(\frac{\~{I}+\<y_1,\ldots,y_k\>}{\~{I}})$ is lower semi-continuous, since $\K$ is algebraically closed $\len = \dim_{\K}$. 

Define the linear map, $(B/\~{I})^k \to B/\~{I}$, given by $(a_i) \mapsto \sum_{i=i}^k y_ia_i$.
Note that the image is exactly $\frac{\~{I}+\<y_1,\dots,y_k\>}{\~{I}}$, therefore the rank of this map is exactly $\dim_{\K}(\frac{\~{I}+\<y_1,\dots,y_k\>}{\~{I}})$. But rank is lower semi-continuous (in the variable $(y_1,\dots,y_k)$) as needed.
\end{proof}

\textbf{Notation:} Let $B = \K[\![x_1,\dots,x_n]\!]/I$ and $h \in B$ then by $H$ we will denote the corresponding subvariety, i.e. $H = \Spec(B/\<h\>)$

\begin{lemma}\label{singular locus semicontinuity}
Let $B = \K[\![x_1,\dots,x_n]\!]/I$ be a ring of dimension $\a+1$ and $\m$ be its maximal ideal. Define
\[\len_{\S} : B/\m^r \to \bb{Z}_{\geq 0}\]
\[\len_{\S}(h) = \len(\O_{\S^{\a}_H}/\m^{r-1})\]
$\len_{\S}$ is well defined and upper semi-continuous, when $B/\m^r$ is given the Zariski topology 
\end{lemma}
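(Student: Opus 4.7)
The plan is to prove well-definedness and then deduce upper semi-continuity by exhibiting $\len_{\S}$ as the composition of a Zariski-continuous map with the semi-continuous length function from Lemma \ref{basic semicontinuity}.

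For well-definedness, suppose $h, h' \in B$ with $h \equiv h' \pmod{\m^r}$; fix lifts $\tilde h, \tilde h' \in A$ differing by an element of $\m^r$. The ideals $I + \<\tilde h\>$ and $I + \<\tilde h'\>$ agree modulo $\m^r \subseteq \m^{r-1}$, since $a\tilde h - a\tilde h' \in \m^r$ for any $a \in A$. For the Jacobian ideals, fix finite generators $f_1, \ldots, f_k$ of $I$ (possible by Noetherianity) and compare the Jacobian matrices of $(f_1, \ldots, f_k, \tilde h)$ versus $(f_1, \ldots, f_k, \tilde h')$: they differ only in the row of partial derivatives of $\tilde h$, by entries in $\m^{r-1}$. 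Expanding any $(n-\a) \times (n-\a)$ minor along this row shows the minor changes by an element of $\m^{r-1}$, while minors not involving this row are unchanged. Thus the ideal $\<h\> + J_{n-\a}(H) + \m^{r-1}$ of $B$ is independent of the representative, so $\len_{\S}(h) = \len_{\S}(h')$. Independence of the particular lift from $B$ to $A$ is guaranteed by the Fitting-ideal interpretation recorded in Definition \ref{singular Definition}.

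For upper semi-continuity, enumerate the $(n-\a) \times (n-\a)$ minors $m_1(h), \ldots, m_N(h)$ of the Jacobian of $(f_1, \ldots, f_k, h)$. The only $h$-dependent row has entries $\partial h / \partial x_j$, which are $\K$-linear in $h$; by multilinearity of the determinant, each $m_j$ is therefore a $\K$-affine-linear function of $h$ (constant if the chosen submatrix avoids the last row, genuinely linear otherwise). Combined with the well-definedness argument just given, this yields a $\K$-affine-linear, hence Zariski-continuous, map of finite-dimensional affine $\K$-spaces
$$\phi \colon B/\m^r \longrightarrow (B/\m^{r-1})^{N+1}, \qquad h \mapsto \bigl(h,\, m_1(h), \ldots, m_N(h)\bigr).$$
Apply Lemma \ref{basic semicontinuity} with $I_1 = \cdots = I_{N+1} = \m^{r-1}$: the associated length function $\len_{\m^{r-1}}$ on $(B/\m^{r-1})^{N+1}$ is upper semi-continuous, and by construction $\len_{\S} = \len_{\m^{r-1}} \circ \phi$. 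Since the pullback of an upper semi-continuous function along a continuous map remains upper semi-continuous, the proof concludes. The only delicate point is the bookkeeping in the well-definedness step, namely that modifying $h$ within its $\m^r$-class perturbs each Jacobian minor only by an element of $\m^{r-1}$; after that the reduction to Lemma \ref{basic semicontinuity} is essentially formal.
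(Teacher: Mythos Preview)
Your proof is correct and follows essentially the same approach as the paper's: factor $\len_{\S}$ as a polynomial (indeed affine-linear) map $h \mapsto (h,\text{minors})$ followed by the length function of Lemma~\ref{basic semicontinuity}, with well-definedness coming from the observation that perturbing $h$ by $\m^r$ perturbs the partials, and hence the minors, by $\m^{r-1}$. The only cosmetic difference is that the paper carries out the construction in $A$ and passes to $B=A/I$ at the end, whereas you work in $B$ throughout; both are fine.
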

\begin{proof}
Let $A = \K[\![x_1,\ldots,x_n]\!]$ and $I = \<f_1,\ldots,f_k\>$
Define \[
\begin{array}{ccccc}
A/\m^r &\to& (A/\m^r)\cross(A/\m^{r-1})^{{k+1 \choose \a}\times{n\choose \a}} &\to& \bb{Z}_{\geq 0} \\
\-h &\mapsto& (\-h, n-\a\cross n-\a \text{ minors of Jacobian of }(\-h,f_1,\ldots,f_k)) &\mapsto& \len(\frac{A}{\m^{r-1}+I+\<h\> + Jac_{n-\a}(H)})
\end{array}
\]
Choose $h_1,h_2 \in \m-\m^2$ \st $h_1 = h_2 = \-h$ modulo $\m^{r}$. Then $\frac{\doe h_1}{\doe x_i} = \frac{\doe h_2}{\doe x_i}$ modulo $\m^{r-1}$, so the first map is well defined. Note that the first map is a multilinear map on the vector space $A/\m^r$ hence continuous in the Zariski topology. And the second map is upper semi-continuous by previous Lemma \ref{basic semicontinuity}.
Now we go modulo $I$ to get the map $\len_{\S}:B/\m^r \to \bb{Z}_{\geq 0}$, which is also upper semi-continuous.
\end{proof}

\section{Algebraic $\m$-adically closed class}

Here we introduce the most important concept of algebraic $\m$-adically closed classes that are critical in proving the main Theorem \ref{AmAC result}.

\begin{defn}\label{AmAC defn}
\AmACDefn

If $U$ is a set complement of an A$\m$AC then we refer to it as an \textbf{algebraic $\m$-adically open (A$\m$AO) class}.
\end{defn}

\begin{lemma}\label{inverse limit criterion}
Let $P \subset B = \K[\![x_1,\ldots,x_n]\!]/I$. Then, the following are equivalent:
\begin{enumerate}
\item $P = \ula{\lim} P_i$, where $P_i \subset B/\m^i$.
\item $P$ is an $\m$-adically closed subset.
\end{enumerate}
\end{lemma}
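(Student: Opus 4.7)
The plan is to prove the equivalence by unwinding the definitions and using the fact that $B$ is a complete local Noetherian ring, so by Krull's intersection theorem $\bigcap_i \m^i = 0$, which makes the $\m$-adic topology Hausdorff and identifies $B$ with $\varprojlim B/\m^i$ as a topological ring. Throughout, let $\pi_i : B \to B/\m^i$ denote the canonical projection; the cosets $b + \m^i$ form a neighborhood basis at $b$ in the $\m$-adic topology. I interpret the statement $P = \varprojlim P_i$ to mean $P = \{b \in B : \pi_i(b) \in P_i \text{ for every } i\}$, where each $B/\m^i$ carries the discrete topology.

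First I would prove $(1) \Rightarrow (2)$. Assume $P = \varprojlim P_i$ and take any $b \notin P$. By the description above, there exists some index $i$ with $\pi_i(b) \notin P_i$. For every $b' \in b + \m^i$ we have $\pi_i(b') = \pi_i(b) \notin P_i$, so the whole basic neighborhood $b + \m^i$ is disjoint from $P$. Hence $B \setminus P$ is open in the $\m$-adic topology, and $P$ is closed.

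Next I would prove $(2) \Rightarrow (1)$. Assuming $P$ is $\m$-adically closed, the natural candidate is $P_i := \pi_i(P) \subset B/\m^i$; these are automatically compatible with respect to the transition maps $B/\m^{i+1} \to B/\m^i$. The inclusion $P \subset \varprojlim P_i$ is tautological. Conversely, take $b \in \varprojlim P_i$; then for every $i$ there exists $p_i \in P$ with $\pi_i(p_i) = \pi_i(b)$, i.e.\ $p_i - b \in \m^i$. This says precisely that the sequence $(p_i) \subset P$ converges $\m$-adically to $b$, and since $P$ is closed we conclude $b \in P$.

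I do not anticipate any real obstacle here; the only subtlety is making sure that the $\m$-adic topology is Hausdorff so that a convergent sequence has a unique limit, which is guaranteed by Krull's intersection theorem applied to the Noetherian local ring $B$. The lemma is essentially a general fact that a subset of a complete Hausdorff topological ring which is the inverse limit of discrete quotients is closed iff it is cut out by conditions modulo each $\m^i$, specialized to the $\m$-adic setting.
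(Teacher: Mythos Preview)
Your proof is correct and follows essentially the same approach as the paper's: for $(1)\Rightarrow(2)$ you show the complement is open while the paper phrases it as an inverse limit of closed sets in the discrete topology, and for $(2)\Rightarrow(1)$ both you and the paper take $P_i=\pi_i(P)$ and use that any $b\in\varprojlim P_i$ is an $\m$-adic limit of elements of $P$. The only minor remark is that the Hausdorffness/Krull argument is not strictly needed for the closure step (a sequence in $P$ converging to $b$ forces $b\in P$ just from $P$ being closed), though it is of course needed for the identification $B\cong\varprojlim B/\m^i$ via completeness.
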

\begin{proof}
(1) $\implies$ (2)

The $\m$-adic topology is given by $B = \ula{\lim} B/\m^i$, where $B/\m^i$ is given the discrete topology. But $P_i$ is closed in the discrete topology of  $B/\m^i$. So $P = \ula{\lim} P_i$ is $\m$-adically closed since it is the inverse limit of closed subsets.

(2) $\implies$ (1)

Let $\pi_i: B \to B/\m^i$ be the canonical projection map.
Let $P_i := \pi_i(P)$. Define $\-P := \ula{\lim} P_i$. 
As seen before, $\-P$ is $\m$-adically closed. We will prove that $\-P$ is the closure of $P$. Since $P$ is also $\m$-adically closed, we will obtain $P = \-P = \ula{\lim} P_i$.

Let $p \in \-P$, then $p = (\dots,\pi_i(p_i),\pi_{i+1}(p_{i+1}),\dots)$. and $p_i \in P$. The sequence $p_i$ converges to $p$ in the $\m$-adic topology (since $\forall i\ \exists p_i = p$ mod $\m^i$). Hence $\-P$ is the closure of $P$.  
\end{proof}

\begin{rmk}
Note that $P_i$ in general are \textbf{neither} closed \textbf{nor} open in the Zariski topology, but are instead closed in discrete topology (since every subset is closed in discrete topology).
\end{rmk}

In \cite{MoriHST}, Mori proves the non-emptiness of a certain fixed subset of $\bb{C}[\![x_1,\dots,x_n]\!]$ by proving that it is an inverse limit of the Zariski constructible set. We in turn have turned this into the definition of an A$\m$AC class. The following \textbf{non-emptiness lemma} is unsurprising. 

\begin{lemma}\label{nonempty lemma}
Let $P = \ula{\lim} P_i$ be an A$\m$AC class. $P \neq \emptyset$ iff $P_i \neq \emptyset \ \forall i$.
\end{lemma}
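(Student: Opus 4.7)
My plan separates the two directions. The forward direction is immediate: given $p \in P$, its projections $\pi_i(p) \in B/\m^i$ must lie in $P_i$ (since $P \subseteq \pi_i^{-1}(P_i)$), so each $P_i$ is nonempty.

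For the substantive direction I assume all $P_i \neq \emptyset$ and produce a compatible sequence $(p_i) \in \prod_i P_i$ realizing an element of $P$. The essential input is the uncountability of $\K$, packaged as the following \emph{intersection lemma}: on an affine $\K$-variety $V$ of finite type, any descending chain $C_1 \supseteq C_2 \supseteq \cdots$ of nonempty Zariski-constructible subsets has $\bigcap_n C_n \neq \emptyset$. To prove it I would first use Noetherianity to stabilize the Zariski closures $\overline{C_n}$ and then pass to an irreducible component $W$ of the limit closure; each $C_n \cap W$ is then dense and constructible in $W$, so its complement in $W$ lies in some proper closed subvariety $Z_n \subsetneq W$. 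The standard fact that an irreducible $\K$-variety of positive dimension is not a countable union of proper closed subvarieties (exactly where uncountability of $\K$ is used) then produces a point of $W \setminus \bigcup_n Z_n \subseteq \bigcap_n C_n$; the zero-dimensional case is trivial.

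With the intersection lemma in hand I apply it twice. First, for $m \geq n$ set $T_n^{(m)} := \pi_{nm}(P_m) \subseteq P_n$, where $\pi_{nm} : B/\m^m \to B/\m^n$ is the canonical $\K$-linear projection. Each $B/\m^j$ is a finite-dimensional $\K$-vector space and hence an affine $\K$-variety of finite type, so Chevalley's theorem makes $T_n^{(m)}$ constructible. The natural compatibility $\pi_{m,m+1}(P_{m+1}) \subseteq P_m$ (which may be assumed without loss of generality by replacing each $P_m$ with $P_m \cap \bigcap_{k<m} \pi_{km}^{-1}(P_k)$, still constructible and leaving $P$ unchanged as an inverse limit) makes the family $\{T_n^{(m)}\}_{m \geq n}$ decreasing in $m$, and the intersection lemma yields $T_n := \bigcap_{m \geq n} T_n^{(m)} \neq \emptyset$. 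Second, I verify that the restriction $\pi_{n,n+1} : T_{n+1} \to T_n$ is surjective: for $t \in T_n$, the fibers $\pi_{n,n+1}^{-1}(t) \cap T_{n+1}^{(m)}$ are nonempty (trace a compatible chain back through the projections), constructible, and decreasing in $m$, so a second application of the lemma supplies a preimage in $T_{n+1}$. Inductively lifting any chosen $t_1 \in T_1$ through these surjections yields a compatible sequence $(t_n) \in \prod_n T_n \subseteq \prod_n P_n$, which is the desired element of $P$.

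The main obstacle is the intersection lemma itself; without uncountability of $\K$ one can easily produce descending chains of nonempty constructibles with empty intersection (say, by removing countably many points from an affine line over a countable algebraically closed field), so that hypothesis is indispensable. Everything else is routine bookkeeping with Chevalley's theorem and the inverse-limit formalism.
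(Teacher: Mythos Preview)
Your proposal is correct and follows essentially the same approach as the paper: both arguments stabilize the images of the $P_m$ in each fixed level, use the uncountability of $\K$ to show a countable family of proper closed subsets cannot exhaust an irreducible variety, and then lift inductively to produce a compatible sequence. The only difference is organizational---you abstract the uncountability step into a stand-alone ``intersection lemma'' and invoke Chevalley for constructibility of the images, whereas the paper works directly with Zariski closures of the images and Noetherian stabilization to obtain dominant transition maps before making the same inductive lift.
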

\begin{proof}
We have a sequence of maps $\cdots \to P_{r+1} \to P_r \to P_{r-1} \to \cdots$ given by the restriction of projection maps.

Define $W_{r,s} :=$ closure of $image(P_r \to P_s)$. (Closure is taken in $P_s$ w.r.t. Zariski topology).

Define $W_{s} := \bigcap_{r>s} W_{r,s}$. 

This is an intersection of closed subsets in a Noetherian space, so this must be a finite intersection. So $W_{s} = W_{t,s}$ for $t$ sufficiently larger than $s$. So the map from $W_t \to W_s$ is a dominant map, for all $t,s$ \st $t>s$.

Now we can get a sequence of spaces $\cdots \to W_{s+1} \to W_{s} \to \cdots$ 
that are subsets of $\cdots,P_{s+1},P_{s},\ldots$ and are non-empty and the maps are dominant.

This implies that $W_s - X_{r,s} \subset Im(W_r \to W_s)$, where $X_{r,s}$ are proper closed subsets. Since $\K$ is uncountable, $W_s - \bigcup X_{r,s} \neq \emptyset$.

Now we can choose a sequence of elements $\phi_s \in W_{s}$ \st $\phi_{s+1} \mapsto \phi_s$.
This gives an element $\phi = (\dots,\phi_s,\phi_{s+1},\dots)$ of the inverse limit $P$.
\end{proof}

\begin{rmk}
As seen in the previous proof, we can always assume that the maps between $P_i$ are dominant.
\end{rmk}

\begin{lemma}\label{basic lemma}
Let $B = \K[\![x_1, \ldots ,x_n]\!]/I$ and $P^1 = \ula{\lim}P^1_i$, $P^2 = \ula{\lim}P^2_i$, $\ldots$, $P^j = \ula{\lim}P^j_i$, $\ldots$  be an infinite sequence of A$\m$AC classes in $B$. 
\begin{enumerate}
\item A$\m$AC classes are closed under finite union. ($P^1 \cup P^2 =  \ula{\lim} P^1_i \cup P^2_i$)
\item A$\m$AC classes are closed under finite intersections. ($P^1 \cap P^2 =  \ula{\lim} P^1_i \cap P^2_i$)
\item A$\m$AC classes are closed under countable intersections. ($\bigcap_j P^j =  \ula{\lim} \bigcap_{j\leq i} P^j_i$ )
\end{enumerate}
Let $B'$ be another such ring, and $Q = \ula{\lim}Q_i$ be an A$\m$AC in $B'$. Let $f: B \to B'$ be a map of local rings. Then,
\begin{enumerate}
\setcounter{enumi}{3}
\item A$\m$AC classes are closed under images. ($f(P^1) =  \ula{\lim} f(P^1_i)$)
\item A$\m$AC classes are closed under inverse images.($f^{-1}(Q) =  \ula{\lim} f^{-1}(Q_i)$)
\end{enumerate}
\end{lemma}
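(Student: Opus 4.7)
The plan is to verify each stated identity as an equality of subsets of $B$ (or $B'$) and, in parallel, to check that the level-$i$ piece on the right-hand side is Zariski-constructible in the finite-dimensional $\K$-space to which it belongs. For the constructibility bookkeeping I rely on the standard facts that constructible sets form a Boolean algebra closed under finite unions, intersections, and complements, together with Chevalley's theorem for the $\K$-linear (hence algebraic) maps $\bar f_i\colon B/\m^i \to B'/(\m')^i$ induced by the local homomorphism $f$ (which exist because $f(\m)\subset \m'$).

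Before starting, I pass to the canonical presentation $P^j_i := \pi_i(P^j)$, as in the proof of Lemma \ref{inverse limit criterion} and the remark following Lemma \ref{nonempty lemma}. This ensures that the transition maps $B/\m^{i+1}\to B/\m^i$ send $P^j_{i+1}$ onto $P^j_i$. With this in force, items (2) and (5) become mechanical: a point $p\in B$ lies in $P^1\cap P^2$ iff $\pi_i(p)$ lies in both $P^1_i$ and $P^2_i$ for every $i$, and the preimage identity $f^{-1}(\ula{\lim}Q_i)=\ula{\lim}\bar f_i^{-1}(Q_i)$ is a formal consequence of the commutativity of
\[
\begin{array}{ccc}
B/\m^{i+1} & \xrightarrow{\bar f_{i+1}} & B'/(\m')^{i+1} \\
\downarrow & & \downarrow \\
B/\m^i & \xrightarrow{\bar f_i} & B'/(\m')^i
\end{array}
\]
combined with the fact that $\bar f_i^{-1}$ of a constructible set is constructible.

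For (1) the only nontrivial direction is $\ula{\lim}(P^1_i\cup P^2_i)\subset P^1\cup P^2$. Given such a $p$, the set $S:=\{\,i:\pi_i(p)\in P^1_i\,\}$ is either unbounded, in which case the downward-compatibility of the canonical presentation forces $\pi_i(p)\in P^1_i$ for every $i$ and hence $p\in P^1$, or else $S$ is bounded, in which case for all sufficiently large $i$ one has $\pi_i(p)\in P^2_i$ and the same downward-compatibility places $\pi_i(p)\in P^2_i$ for every $i$, so $p\in P^2$. For (3) I truncate to $\bigcap_{j\le i}P^j_i$ at level $i$ precisely so that each level is a finite intersection of constructibles (hence constructible); coverage of the full countable intersection holds because, for fixed $j$, the conditions $\pi_i(p)\in P^j_i$ for $i\ge j$ propagate downward to all $i\le j$ by projection.

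The main obstacle is (4), specifically the inclusion $\ula{\lim}\bar f_i(P^1_i)\subset f(P^1)$. Given $q$ on the left, set $Q_i:=P^1_i\cap \bar f_i^{-1}(\pi'_i(q))$; this is nonempty by hypothesis on $q$ and is constructible as the intersection of a constructible set with a closed fibre, and the commutative square above shows that the projections carry $Q_{i+1}$ into $Q_i$. Thus $(Q_i)$ presents an A$\m$AC subset of $B$, and Lemma \ref{nonempty lemma} — which is exactly where the uncountability of $\K$ is used — produces a point $p\in\ula{\lim}Q_i$. By construction $p\in P^1$ and $\pi'_i(f(p))=\bar f_i(\pi_i(p))=\pi'_i(q)$ for every $i$, so $f(p)=q$ by $\m'$-adic separation of $B'$. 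Constructibility of $\bar f_i(P^1_i)$ at each level is Chevalley's theorem applied to the linear map $\bar f_i$, completing the verification that $f(P^1)$ is A$\m$AC in the claimed form; the remaining items are purely formal by comparison.
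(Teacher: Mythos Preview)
Your proof is correct and follows the paper's strategy closely --- the formal items (2), (3), (5) reduce to limits commuting with limits/preimages, and for (4) you apply Lemma~\ref{nonempty lemma} to the fibre system $Q_i = P^1_i \cap \bar f_i^{-1}(\pi'_i(q))$ exactly as the paper does. One caveat worth flagging: the preliminary passage to the ``canonical presentation'' $P^j_i := \pi_i(P^j)$ is both unnecessary and not obviously legitimate, since it is not clear that $\pi_i(P^j)$ is itself constructible (the remark after Lemma~\ref{nonempty lemma} produces a \emph{different} constructible refinement, with merely dominant transition maps, by taking closures of images). Fortunately none of your arguments actually use surjectivity --- the ``downward-compatibility'' you invoke in (1) and (3) requires only that the transition maps send $P^j_{i+1}$ into $P^j_i$, which is already part of the inverse-system structure implicit in writing $P^j = \ula{\lim} P^j_i$ --- so this step can simply be dropped. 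With that adjustment, your pigeonhole treatment of (1) is in fact more explicit than the paper's one-line appeal to ``inverse limit commutes with finite union'', and your remark that constructibility of $\bar f_i(P^1_i)$ comes from Chevalley is a detail the paper leaves implicit.
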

\begin{proof}
Since inverse limit is also a limit it commutes with all other limits.
We immediately obtain \textbf{(2)},\textbf{(5)} and for \textbf{(3)} we obtain 
\[
\bigcap_j P^j = \ula{\lim} \bigcap P^j_i = \ula{\lim} \bigcap_{j\leq i} P^j_i \text{ (because the intersection is countable.)}
\]

We now give a proof of the second equality above. $\ula{\lim} \bigcap P^j_i \subset \ula{\lim} \bigcap_{j\leq i} P^j_i$ as $\bigcap P^j_i \subset \bigcap_{j\leq i} P^j_i$. If $(\ldots,p_i,\ldots,p_2,p_1) \in \bigcap_{j\leq i} P^j_i$, then $\forall j,\ p_j \in P^j_j$ but image of $p_j = p_i$, so $\forall j, \ p_i \in P^j_i$, so $\ p_i \in \bigcap P^j_i$.
\\



\textbf{(1)} Follows because inverse limit commutes with finite union

\textbf{(4)} 
Clearly, $f(P^1) \subset  \ula{\lim} f(P^1_i)$. 
\\

Now let $\phi = (\dots,f(p_i),f(p_{i+1}),\dots) \in \ula{\lim} f(P^1_i)$. Take $P^{\phi}_i = f^{-1}(f(p_i)) \cap P^1_i$. Now $P^{\phi}_i$ also forms an inverse system of constructible sets. So $P^{\phi} = \ula{\lim} P^{\phi}_i$ is an A$\m$AC, and by Lemma \ref{nonempty lemma} it is nonempty. Also, since $P^{\phi}_i \subset P^1_i$, we have $P^{\phi} \subset P^1$. For any $\psi \in P^{\phi}$, we have $f(\psi) = \phi$. Therefore $\phi \in f(P^1)$.
\end{proof}

\begin{exmp}\label{basic example}
These are some examples of A$\m$AC classes.
\begin{enumerate}
\item $\{0\} = \ula{\lim}\ \m^{k}/\m^{k}$. (i.e. singletons)

\item equations defining hyperplanes i.e. $\m-\m^2 = \ula{\lim}\ (\m/\m^{k} - \m^{2}/\m^{k})$.

\item $\m^{k} - \m^{l} = \ula{\lim}\ (\m^{k}/\m^{t} - \m^{l}/\m^{t})$.

\item $\{h \in A = \K[\![x_1,\ldots,x_n]\!]\ |\ \tau(h) = r\}$, where $\tau(h) = \len(\O_{\S_{H}})$ and is called Tjurina number. This subset is also $\m$-adically open. (this is a special case of example 6) 

\item $\{h \in A = \K[\![x_1,\ldots,x_n]\!]\ |\ \mu(h) = r\}$, where $\mu(h) = \len(A/\<\frac{\doe h}{\doe x_i}\>)$ and is called the Milnor number of the singularity. This subset is also $\m$-adically open. (the proof similar to Lemma \ref{Tjurina AmAC})

\textbf{Notation:} From now onwards $B = \K[\![x_1,\ldots,x_n]\!]/I$ will denote a ring all whose components have dimension $\a+1$, and $H$ will denote $\Spec(B/\<h\>)$ for any $h \in B$.

\begin{rmk}\label{equi remark}
As $B$ is chosen to be equidimensional of dimension $\a+1$, Let $ZD(B)$ denote the Zero divisor of $B$. For any $h \not\in ZD(B)$, $H$ is equidimensional of dimension $\a$, so $\S^{\a}_{H} = \S_H$.
\end{rmk}

\item $T(r) := \{h \in B\ |\ \len(\O_{\S_{H}}) = r\}$. This subset is also $\m$-adically open. (for a proof see \ref{Tjurina AmAC})

\item $D(d) := \{h \in B \ |\ \dim(\S^{\a}_{H}) \geq d\}$. For the definition of $\S^{\a}_{H}$ see \ref{singular Definition}. (Also see remark \ref{equi remark}.) (for proof see \ref{dimension AmAC})

\begin{rmk}
Note that if $h \in ZD(B)$, then $H$ has dimension $\a+1$. So the $\a \cross \a$ minors will vanish along the $(\a+1)$-dimensional component of $H$. In particular, $\S^\a_H$ will contain this component. Therefore, $ZD(B) \subset D(d) \ \forall d\leq \a+1$. Similarly if $h \not\in ZD(B)$ then $H$ has dimension $\a$ as $\S^\a_H \subset H$, we see that $h \not\in D(\a+1)$. So $D(\a+1) = ZD(B)$ (if $\Spec(B)$ is equidimensional). Finally, note that $D(d) = \emptyset$ if $d > \a+1$.

Though $D(d)$ is an A$\m$AC class, it does not satisfy the hypothesis of Theorem \ref{AmAC result} as $ZD(X) \subset D(d)$ for all $d\leq \a+1$. Nonetheless, the fact that D(d) is an A$\m$AC class is used in the proof of Theorem \ref{AmAC result} makes this an interesting example.
\end{rmk}

\end{enumerate}
\end{exmp}

\begin{lemma}\label{Tjurina AmAC}
$T(r)$ (defined in \ref{basic example}.5) is both A$\m$AC and A$m$AO
\end{lemma}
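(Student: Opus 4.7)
The plan is to leverage the upper semi-continuous function $\len_{\S}^{(i)}(\bar h) := \len(\O_{\S^{\a}_{H}}/\m^{i-1})$ on $B/\m^{i}$ from Lemma \ref{singular locus semicontinuity}. Two monotonicity facts drive the argument. First, for any fixed $h \in B$, the sequence $\{\len_{\S}^{(i)}(h)\}_{i}$ is non-decreasing in $i$, and by Krull's intersection theorem $\sup_{i} \len_{\S}^{(i)}(h) = \len(\O_{\S^{\a}_{H}})$ (with $\infty$ allowed). Second, if $\len(\O_{\S^{\a}_{H}}) = r < \infty$, the strictly descending chain $\O_{\S^{\a}_{H}} \supsetneq \m \O_{\S^{\a}_{H}} \supsetneq \cdots$ has at most $r$ steps, so $\m^{r} \O_{\S^{\a}_{H}} = 0$ and $\len_{\S}^{(i)}(h) = r$ as soon as $i \geq r+1$.

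For the A$\m$AC direction, define $T(r)_{i} := \{\bar h \in B/\m^{i} : \len_{\S}^{(i)}(\bar h) = r\}$ for $i \geq r+1$, and for $i \leq r$ take $T(r)_{i}$ to be the image of $T(r)_{r+1}$ under the projection $B/\m^{r+1} \to B/\m^{i}$, which is Zariski constructible by Chevalley. For $i \geq r+1$ the set $T(r)_{i}$ is the intersection of the closed $\{\len_{\S}^{(i)} \geq r\}$ with the open $\{\len_{\S}^{(i)} \leq r\}$ (Lemma \ref{singular locus semicontinuity}), so it is locally closed and constructible. The identification $T(r) = \ula{\lim} T(r)_{i}$ is then routine: $h \in T(r)$ gives $\len_{\S}^{(i)}(h) = r$ for all $i \geq r+1$ by the second fact above, while any compatible tuple in $\ula{\lim} T(r)_{i}$ corresponds to $h$ with $\len_{\S}^{(i)}(h) = r$ for $i$ large, forcing $\len(\O_{\S^{\a}_{H}}) = \sup_{i} \len_{\S}^{(i)}(h) = r$.

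For the A$\m$AO direction I show that $T(r)^{c}$ is itself A$\m$AC. Set $U_{i} := \{\bar h \in B/\m^{i} : \len_{\S}^{(i)}(\bar h) \neq r\}$ for $i \geq r+2$ and $U_{i} := B/\m^{i}$ for $i \leq r+1$. For large $i$, $U_{i}$ is the union of the open set $\{\len_{\S}^{(i)} < r\}$ and the closed set $\{\len_{\S}^{(i)} > r\}$, hence constructible. To check $T(r)^{c} = \ula{\lim} U_{i}$: if $\len(\O_{\S^{\a}_{H}}) < r$ then $\len_{\S}^{(i)}(h) < r$ for every $i$, while if $\len(\O_{\S^{\a}_{H}}) > r$ (possibly $\infty$) then $\len_{\S}^{(i)}(h) > r$ for all $i$ sufficiently large; conversely, if $\len_{\S}^{(i)}(h) \neq r$ for all large $i$, the non-decreasing integer sequence $\{\len_{\S}^{(i)}(h)\}_{i}$ either stays $< r$ forever or eventually exceeds $r$, so its supremum is $\neq r$.

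The main obstacle is the bookkeeping for small $i$: since $\len_{\S}^{(1)} \equiv 0$, the naive definition of $T(r)_{i}$ (or $U_{i}$) produces the wrong set below a threshold. The fix is to truncate the definition above the threshold $i \geq r+1$ (respectively $r+2$) and to use the image of / the full ambient space below it; once that bookkeeping is in place, the proof reduces cleanly to Lemma \ref{singular locus semicontinuity} together with the two elementary monotonicity facts about Hilbert--Samuel lengths recorded at the outset.
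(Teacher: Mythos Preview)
Your approach is correct and runs on the same engine as the paper, namely the upper semi-continuity of $\len_{\S}^{(i)}$ from Lemma~\ref{singular locus semicontinuity}. The paper's argument is tighter, however: instead of carrying the whole tower $\{T(r)_i\}_{i\ge r+1}$ and then a second tower $\{U_i\}_{i\ge r+2}$, it shows that $T(r)$ is already determined at the \emph{single} level $r+2$, namely $T(r)=\pi^{-1}(\bar T)$ with $\bar T=\{\bar h\in B/\m^{r+2}:\len(\O_{\S_H}/\m^{r+1})=r\}$. This one equality yields A$\m$AC and A$\m$AO simultaneously, since the complement is the preimage of the constructible set $\bar T^{c}$. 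The nontrivial inclusion $\pi^{-1}(\bar T)\subset T(r)$ is handled via Lemma~\ref{dim bound lemma}: if $\len(\O_{\S_H}/\m^{r+1})=r$ then part~(1) forces $\dim\S_H=0$ and part~(2) forces the multiplicity to be $\le r$, so the truncation by $\m^{r+1}$ is already harmless. Your argument recovers the same facts through the monotone Hilbert--Samuel sequence, just spread over infinitely many levels.

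One step in your A$\m$AO direction is not justified as written. To prove $T(r)^{c}\subset\ula{\lim}U_i$ in the case $\len(\O_{\S^{\a}_H})>r$, you must show $\len_{\S}^{(i)}(h)\neq r$ for \emph{every} $i\ge r+2$, not merely for $i$ sufficiently large; otherwise nothing prevents the non-decreasing sequence from passing through the value $r$ at some $i$ in the range $r+2\le i<\text{``large''}$, and then $\bar h\notin U_i$. Your strictly-descending-chain observation does in fact give the required uniform bound (it yields $\len(\O_{\S^{\a}_H}/\m^{k+1})\ge\min(k+1,\len(\O_{\S^{\a}_H}))$, so at $k=i-2\ge r$ one gets $\len_{\S}^{(i)}(h)\ge r+1$), but you should make that step explicit rather than leaving it at ``sufficiently large''.
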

\begin{proof}
We look at $B_{r+2} = B/\m^{r+2}$ regarded as an affine space. Define $\pi: B \to B_{r+2}$ to be the canonical quotient map.

Using Lemma \ref{singular locus semicontinuity}, $\-T := \{\-h \in  B/\m^{r+2} \ | \ \len(\O_{\S_{H}}/\m^{r+1}) = r\}$ is a Zariski constructible subset. As seen in Lemma \ref{singular locus semicontinuity}, $\len(\O_{\S_{H}}/\m^{r+1})$ does not depend on which lift $h$ of $\-h$ is chosen.

We will prove that $T(r) = \pi^{-1}(\-T)$. 

First, let $h \in T(r)$. Since $\len(\O_{\S_{H}}) = r$, $\m^{r+1}\O_{\S_{H}} = 0$, so  $\O_{\S_{H}} = \O_{\S_{H}}/\m^{r+1}$, so $\pi(h)\in\-T$
Let $h \in \pi^{-1}(\-T)$ and let $\-h := \pi(h)$. If $\dim(\O_{\S_{H}}) > 0$, then by Lemma \ref{dim bound lemma}(1), $\len(\O_{\S_{H}}/\m^{r+1}) \geq r+1$, which is a contradiction to the choice of $h$. 
So $\dim(\O_{\S_{H}}) = 0$ i.e. $h$ has an isolated singularity.

Let multiplicity of $\S_H$ = $\len(\O_{\S_{H}}) \geq r+1$, then by Lemma \ref{dim bound lemma}(2) $\len(\O_{\S_{H}}/\m^{r+1}) \geq r+1$, which is a contradiction to the choice of $h$. So $\len(\O_{\S_{H}}) \leq r$, which implies that $\O_{\S_{H}}/\m^{r+1} = \O_{\S_{H}}$. 
So $\len(\O_{\S_{H}}) = r$, This implies $h \in T(r)$.
\end{proof}

\begin{lemma}\label{dimension AmAC}
$D(d)$ (defined in \ref{basic example}.7) is a an A$\m$AC.
\end{lemma}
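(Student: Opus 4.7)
The plan is to characterize membership in $D(d)$ by a countable family of lower bounds on the Hilbert--Samuel function of the local ring $\O_{\S^{\a}_{H}}$, and to verify that each such bound is a Zariski closed condition at a suitable finite truncation of $B$. This parallels the proof of Lemma \ref{Tjurina AmAC}, but because a dimension bound cannot be detected at any single finite level, we must instead exploit the closure of A$\m$AC classes under countable intersections (Lemma \ref{basic lemma}(3)).

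First, I would prove the equivalence
\[
h \in D(d) \ \iff \ \len\!\left(\O_{\S^{\a}_{H}}/\m^{k}\right) \geq {k-1+d \choose d} \text{ for every } k \geq 1.
\]
For the forward direction, $\O_{\S^{\a}_{H}}$ is a local ring with residue field $\K$, which is infinite since $\K$ is uncountable, so Lemma \ref{dim bound lemma}(1) applied with $\dim(\O_{\S^{\a}_{H}}) = d' \geq d$ gives $\chi^{\m}(k-1) \geq {k-1+d' \choose d'} \geq {k-1+d \choose d}$, where the second inequality is monotonicity of binomial coefficients in the upper parameter. Conversely, by Lemma \ref{dimension} the Hilbert--Samuel function agrees for large $n$ with a polynomial of degree $\dim(\S^{\a}_{H})$ and positive leading coefficient; since it dominates the degree-$d$ polynomial ${n+d \choose d}$ at infinitely many positive integers, Lemma \ref{poly degree} forces $\dim(\S^{\a}_{H}) \geq d$.

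Second, by Lemma \ref{singular locus semicontinuity} the assignment $\bar h \mapsto \len(\O_{\S^{\a}_{H}}/\m^{k})$ is a well-defined upper semi-continuous function on $B/\m^{k+1}$, so
\[
C_k := \Big\{ \bar h \in B/\m^{k+1} : \len(\O_{\S^{\a}_{H}}/\m^{k}) \geq {k-1+d \choose d} \Big\}
\]
is Zariski closed, hence constructible. Let $\pi_i : B \to B/\m^i$ and $\pi_{i,j} : B/\m^i \to B/\m^j$ denote the canonical projections; these are $\K$-linear surjections of finite-dimensional $\K$-vector spaces, and so pull constructible sets back to constructible sets. Define
\[
D(d)_i := \bigcap_{k=1}^{i-1} \pi_{i,k+1}^{-1}(C_k) \ \subset \ B/\m^i,
\]
with $D(d)_1 := B/\m$. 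Each $D(d)_i$ is Zariski closed, and the compatibility $\pi_{i+1,k+1} = \pi_{i,k+1} \circ \pi_{i+1,i}$ for $k \leq i-1$ yields $\pi_{i+1,i}(D(d)_{i+1}) \subset D(d)_i$, so $\{D(d)_i\}$ is a genuine inverse system of constructible sets.

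Finally, $h \in \ula{\lim} D(d)_i$ iff $\pi_i(h) \in D(d)_i$ for every $i$, which unwinds precisely to the numerical characterization of the first step; hence $D(d) = \ula{\lim} D(d)_i$ is A$\m$AC. The only nontrivial ingredient is the first-step characterization, where Lemma \ref{dim bound lemma}(1) converts the geometric dimension condition into uniform numerical bounds visible at every finite truncation; once that is established, the semi-continuity and the inverse system verification are formal.
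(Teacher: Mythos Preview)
Your proof is correct and essentially identical to the paper's: the paper defines $D_r(d) := \{\bar h \in B/\m^{r} : \len(\O_{\S^{\a}_{H}}/\m^{k+1}) \geq {k+d \choose d}\ \forall\, k \leq r-2\}$, which after the index shift $k' = k+1$ coincides with your $D(d)_r$, and proves the two inclusions via Lemma~\ref{dim bound lemma}(1) and the Hilbert--Samuel polynomial degree argument just as you do. Your write-up is a bit more explicit about why the $D(d)_i$ form an inverse system and why each $C_k$ is constructible, but the underlying argument is the same.
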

\begin{proof}
We will prove that $D(d) = \ula{\lim} D_i(d)$, where $D_i(d)$ is defined as follows:

$$D_r(d) := \{\-h \in B/\m^{r} \ |\ \len(\O_{\S^{\a}_{H}}/\m^{k+1}) \geq {k+d \choose d}, \ \forall \ k \leq r-2\}$$
Using Lemma \ref{singular locus semicontinuity}, $D_r(d)$ is a Zariski constructible subset.

$D(d) \subset \ula{\lim} D_i(d)$ because if $\dim(\S^{\a}_{H}) \geq r$ then $\-h = h+\m^{r}$ satisfies the condition of $D_r(d)$ by Lemma \ref{dim bound lemma}(1).

Let $h \in B$ and $h \not\in D(d)$, then $\len(\O_{\S^{\a}_{H}}/\m^{k+1})$ is a polynomial of degree $< d$ (Lemma \ref{dimension}). So for large $k$, $\len(\O_{\S^{\a}_{H}}/\m^{k+1})<{k+d \choose d}$ which is a higher degree polynomial. So $h \not\in \ula{\lim} D_i(d)$.
\end{proof}

Gurjar's Theorem in \cite{GurjarHST} can be understood as ``For a sequence of hyperplanes $H_i$ if $X \cap H_i$ has a 0-dimensional singular locus and Milnor number tends to $\infty$ as $i \to \infty$, then there exists a hyperplane $H$ \st dimension of singular locus of $X \cap H$ is $\geq 1$". 
We want to generalize to the following case: ``For every integer $i \geq 0$, if the dimension of singular locus of $X \cap H_i$ is $d$ and some\footnote{The invariant we use is the multiplicity of singular locus. There can possibly be other invariants which can lead to similar results.} invariant tends to infinity then there exists a hyperplane $H$ \st dimension of singular locus of $X \cap H$ is $\geq d+1$".
But why should we stop at a jump of $d$ to $d+1$? Why not give a theorem statement for a $d$ to $d+r$ jump? 
\\

Note that, as seen in Section 2, understanding the growth of the Hilbert-Samuel polynomial determines the dimension of variety. This brings us to the following Lemma with a technical growth condition. Note that the Lemma puts no constraint on the dimension of $\S_{X \cap H_i}$.

\begin{lemma}\label{AmAC lemma}
Let $X$ be an equidimensional closed subvariety of $\Spec(A)$. Let $ZD(X)$ denote the zero divisor of $\O_X$ in $A$. Let $P \subset A$ be an algebraic $\m$-adically closed class \st $ZD(X) \cap P = \emptyset$. Let $h_i \in P$ be an infinite sequence of hypersurfaces \st $\len(\O_{\S_{X \cap H_i}}/\m^{k+1}) \geq {k + d+r \choose d+r}$ for all $k<i$. Then there exists a hypersurface $h \in P$ \st $\S_{X \cap H}$ is of dimension $\geq d+r$.
\end{lemma}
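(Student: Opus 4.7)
The plan is to carve out of $P$ an A$\m$AC subclass $Q$ consisting of those $h$ for which the Hilbert--Samuel function of $\S_{X\cap H}$ retains the required lower bound $\binom{k+d+r}{d+r}$ at \emph{every} level $k$, produce a member of $Q$ via the non-emptiness Lemma \ref{nonempty lemma}, and then read off the dimension using Lemma \ref{dimension} together with Lemma \ref{poly degree}.

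Setting $B := A/I(X)$ (equidimensional of dimension $\a+1$) and writing $P = \ula{\lim} P_i$ with $P_i \subset A/\m^i$ constructible, I would define, for each $i\geq 2$,
$$Q_i := \bigg\{\-h \in P_i \ \bigg|\  \len\bigg(\frac{\O_{\S^{\a}_{X \cap H}}}{\m^{k+1}}\bigg) \geq \binom{k+d+r}{d+r}, \ \text{for all } k\leq i-2\bigg\}.$$
Each individual condition cuts out a Zariski closed subset of $B/\m^{k+2}$ by the upper semi-continuity Lemma \ref{singular locus semicontinuity}; pulling back through the linear (hence Zariski continuous) projection $A/\m^i \twoheadrightarrow B/\m^{k+2}$, taking a finite intersection over $k$, and intersecting with the constructible $P_i$ shows that $Q_i$ is constructible in $A/\m^i$. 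To see $Q_i \neq \emptyset$, note that for any $j\geq i-1$ the image $\-h_j$ lies in $P_i$, and since $ZD(X)\cap P=\emptyset$ forces $X\cap H_j$ to be equidimensional of dimension $\a$ (so $\S^{\a}_{X\cap H_j}=\S_{X\cap H_j}$ by Remark \ref{equi remark}), the hypothesis $\len(\O_{\S_{X\cap H_j}}/\m^{k+1})\geq \binom{k+d+r}{d+r}$ for $k<j$ is in particular satisfied for every $k\leq i-2$. The transition maps $Q_{i+1}\to Q_i$ are inherited from $P_{i+1}\to P_i$, so $Q := \ula{\lim} Q_i$ is an A$\m$AC class, and Lemma \ref{nonempty lemma} gives $Q \neq \emptyset$.

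Picking any $h \in Q \subset P$, we again have $\S^{\a}_{X\cap H}=\S_{X\cap H}$, and the compatibility of the $Q_i$ forces $\len(\O_{\S_{X\cap H}}/\m^{k+1}) \geq \binom{k+d+r}{d+r}$ for \emph{all} $k\geq 0$. By Lemma \ref{dimension}, for $k\gg 0$ the left side agrees with the Hilbert--Samuel polynomial of $\O_{\S_{X\cap H}}$, whose degree equals $\dim\S_{X\cap H}$; Lemma \ref{poly degree} then forces this degree to be at least $d+r$, yielding $\dim\S_{X\cap H}\geq d+r$. The one place where care is needed is the first step, namely confirming that imposing these length inequalities really produces a constructible subset of $A/\m^i$, but this reduces cleanly to Lemma \ref{singular locus semicontinuity} applied to $B$ together with continuity of the projection $A\twoheadrightarrow B$.
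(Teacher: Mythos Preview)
Your proof is correct and follows essentially the same route as the paper. The only difference is packaging: the paper invokes the pre-built A$\m$AC class $D(d+r)$ from Example~\ref{basic example}.7 (whose proof in Lemma~\ref{dimension AmAC} contains exactly your Hilbert--Samuel/Lemma~\ref{poly degree} argument) and then forms $P\cap f^{-1}(D(d+r))$ via Lemma~\ref{basic lemma}, whereas you build the intersected class $Q$ directly and re-derive the constructibility and dimension bound by hand.
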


\begin{proof}
Let $X = \Spec(B)$. Define $\a + 1 := \dim{X}$ so if $h \not\in ZD(X)$ then $\S^{\a}_{H} = \S_{H}$ (where $H$ denotes $\Spec(B/\<h\>)$) (see remark \ref{equi remark}).

Let $D(d+r) = \{h \in B \ |\ \dim(\S^{\a}_{H}) \geq d+r\}$ (where $H$ denotes $\Spec(B/\<h\>)$)
As seen in example \ref{basic example}.7, $D(d+r)=\ula{\lim}D_i(d+r)$ where $$D_i(d+r) = \{\-h \in B/\m^{i} \ |\ \len(\O_{\S^{\a}_{H}}/\m^{k+1}) \geq {k+d+r \choose d+r}, \ \forall \ k \leq i-1\}$$
Let map $f: A \to B$ be the map induced by the inclusion of $X \to \Spec(A)$. 
Let $P = \ula{\lim} P_i$.

The hypothesis implies that for every $i>0$, there exists a hypersurface $h_i \in P$ \st $\len(\S_{H_i\cap X}/\m^{k+1}) \geq {k+d+r \choose d+r}$, forall $k \leq i-1$.
By Lemma \ref{dimension AmAC}, we know that $D(d+r) = \ula{\lim}D_i(d+r)$ is an A$\m$AC. 
Applying Lemma \ref{basic lemma}, we get $P \cap f^{-1}(D(d+r)) = \ula{\lim} (P_i \cap f^{-1}(D_i(d+r)))$ is an A$\m$AC. 
Now using Lemma \ref{nonempty lemma}, gives us that $P \cap f^{-1}(D(d+r)) \neq \emptyset$.
\end{proof}

\begin{thm}\label{AmAC result}
\AmACResult
\end{thm}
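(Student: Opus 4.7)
The approach is to reduce the statement directly to Lemma \ref{AmAC lemma} applied with $r = 1$. That lemma produces a hypersurface $h \in P$ with $\dim(\S_{X \cap H}) \geq d+1$ as soon as we can exhibit a sequence $h_i \in P$ whose singular loci satisfy the truncated Hilbert--Samuel lower bound
\[
\len\bigl(\O_{\S_{X \cap H_i}} / \m^{k+1}\bigr) \;\geq\; \binom{k + d + 1}{d+1} \qquad \text{for every } k < i.
\]
So the only task is to upgrade the qualitative hypothesis ``multiplicity of $\S_{X \cap H_i}$ tends to $\infty$'' to this quantitative pointwise bound.

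The first step is a subsequence extraction. Write $e_i$ for the multiplicity of the local ring $R_i := \O_{\S_{X \cap H_i}}$ with respect to its maximal ideal $\m$. The hypothesis gives $e_i \to \infty$, so after relabeling we may assume $e_i \geq i$ for every $i$. Since $h_i \in P$ avoids $ZD(X)$ by assumption, $X \cap H_i$ is equidimensional of dimension $\dim(X)-1$ and $\S_{X \cap H_i}$ coincides with $\S^{\alpha}_{X \cap H_i}$; in particular $R_i$ is a Noetherian local ring of Krull dimension $d$ with residue field $\K$, which is infinite, so it satisfies the hypotheses of Lemma \ref{dim bound lemma}.

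The second step is the application of Lemma \ref{dim bound lemma}(2) to each $R_i$: for every $n$ in the range $n \leq e_i - 1$,
\[
\chi_{R_i}^{\m}(n) \;\geq\; \binom{n + d + 1}{d + 1}.
\]
For any $k < i$ we have $k \leq i - 1 \leq e_i - 1$, so the bound applies at $n = k$ and yields exactly the required inequality. Invoking Lemma \ref{AmAC lemma} with $r = 1$ on this subsequence then produces the desired $h \in P$ with $\dim(\S_{X \cap H}) \geq d + 1$.

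There is essentially no further obstacle, since all the genuinely subtle work has been absorbed into the two prior lemmas: Lemma \ref{AmAC lemma} handles the inverse-limit extraction and the passage from polynomial growth of the truncated length to an actual lower bound on dimension, while Lemma \ref{dim bound lemma}(2) provides the quantitative translation of ``large multiplicity in dimension $d$'' into the binomial growth bound. The one point requiring care is that the bound from Lemma \ref{dim bound lemma}(2) is valid only in the range $n \leq e_i - 1$; this is precisely what forces us to match the truncation index against $e_i$, i.e.\ to extract the subsequence with $e_i \geq i$ at the outset.
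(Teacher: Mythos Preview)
Your proposal is correct and follows essentially the same approach as the paper: extract a subsequence with multiplicity at least $i$, apply Lemma~\ref{dim bound lemma}(2) to obtain the binomial lower bound on $\chi^{\m}_{R_i}(k)$ for $k<i$, and then invoke Lemma~\ref{AmAC lemma} with $r=1$. Your write-up is simply more explicit about the subsequence extraction and the range condition $n\leq e_i-1$, which the paper leaves implicit.
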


\begin{proof}
Because of the hypothesis, there exists  a hyperplane $h_i \in P$ \st $\dim(\S_{H_i\cap X}) = d$ and the multiplicity of $\S_{H_i\cap X}$ is greater than or equal to $i$. 
Lemma \ref{dim bound lemma}(2) implies that $\len(\O_{\S_{H_i\cap X}/\m^{k+1}}) \geq {k+d+1 \choose d+1}$ $\forall k<i$.
So $h_i$ satisfies the hypothesis of Lemma \ref{AmAC lemma} (with $r = 1$).
\end{proof}

\begin{cor}\label{Hyperplane section theorem}
\HyperplaneSectionTheorem
\end{cor}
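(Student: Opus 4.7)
The plan is to specialize Theorem \ref{AmAC result} to $P := \m - \m^2$. The hypotheses on the sequence $h_i$ (namely $\dim \S_{X \cap H_i} = d$ and the multiplicity of $\S_{X \cap H_i}$ tending to $\infty$) match those of Theorem \ref{AmAC result} verbatim, so all that remains is to verify the two conditions the theorem imposes on $P$: that $\m - \m^2$ is an A$\m$AC class, and that $ZD(X) \cap (\m - \m^2) = \emptyset$.

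The first condition is exactly example \ref{basic example}.2: for each $k$ the set $\m/\m^k - \m^2/\m^k$ is the set-theoretic complement of a linear subspace inside the finite-dimensional affine space $A/\m^k$, hence Zariski constructible, and the compatible family of these sets presents $\m - \m^2$ as an inverse limit of Zariski constructible sets in the sense of Definition \ref{AmAC defn}.

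For the second condition, I will use that $X$ is an irreducible subvariety of $\Spec(A)$, interpreted (as is standard for a variety) as integral. Writing $I := I(X)$, the coordinate ring $\O_X = A/I$ is then a domain, so an element of $A$ is a zero divisor on $\O_X$ if and only if it lies in $I$. A linear form $h \in \m - \m^2$ lying in $I$ would give $\<h\> \subset I$, equivalently $X \subset H$, contradicting the assumption that $X$ is not embedded in any hyperplane. Hence $ZD(X) \cap (\m - \m^2) = \emptyset$, and Theorem \ref{AmAC result} applied to $P = \m - \m^2$ then produces the desired $h \in \m - \m^2$ with $\dim \S_{X \cap H} \geq d+1$.

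Since the corollary is a direct specialization of Theorem \ref{AmAC result}, no step presents a genuine obstacle; the only substantive line is the zero divisor check, which is immediate once one invokes integrality of $X$. If one wanted to relax ``integral'' to merely ``topologically irreducible,'' one would additionally need to rule out a linear form lying in an embedded associated prime of $A/I$, which does not follow from ``not contained in a hyperplane'' alone; under the stated hypotheses, however, this case does not arise.
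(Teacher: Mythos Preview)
Your proof is correct and follows essentially the same route as the paper: verify that $P=\m-\m^2$ is an A$\m$AC class (example \ref{basic example}.2), check $ZD(X)\cap(\m-\m^2)=\emptyset$ from irreducibility plus the ``not contained in a hyperplane'' hypothesis, and invoke Theorem \ref{AmAC result}. Your write-up is in fact a bit more explicit than the paper's two-line proof, and your closing remark about embedded primes is a useful caveat the paper does not mention.
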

\begin{proof}
Zero divisors of $X$ correspond to component of $X$, But since $X$ is irreducible and not contained in any hyperplane, $ZD(X) \cap \m - \m^2 = \emptyset$.
We can now apply Theorem \ref{AmAC result} to complete the proof.
\end{proof}

\begin{thm}\label{multi result}
Let $X$ be an irreducible closed subvariety of $\Spec(A)$ and $P \subset A$ be an algebraic $\m$-adically closed class. Let $h_i \in P$ be an infinite sequence of hypersurfaces \st the multiplicity of $X \cap H_i$ tends to $\inf$ as $i \to \inf$. Then there exists a hypersurface $h \in P$ \st $X \subset H$.
\end{thm}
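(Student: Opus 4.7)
The plan is to mirror the argument of Lemma \ref{AmAC lemma}, replacing the class $D(d)$ of hypersurfaces with large singular locus by an A$\m$AC class that detects the condition ``$h$ vanishes on $X$''. Write $B = A/I$, $d = \dim X$, and let $f\colon A \to B$ be the quotient. Since $X$ is irreducible, $B$ is a domain of dimension $d$, and the conclusion $X \subset H$ is equivalent to $f(h)=0$, i.e.\ $h \in I$. It therefore suffices to produce $h \in P \cap I$.

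The construction is as follows: for each $s \geq 1$, set
\[F_s := \Bigl\{\bar h \in B/\m^s : \len\bigl(B/(\langle h \rangle + \m^{k+1})\bigr) \geq \tbinom{k+d}{d} \text{ for all } k \leq s-1\Bigr\}.\]
By Lemma \ref{basic semicontinuity} each $F_s$ is Zariski closed, so $F := \ula{\lim} F_s$ is an A$\m$AC class in $B$. I claim $F = \{0\}$: Lemma \ref{dim bound lemma}(1) applied to $B$ gives $0 \in F$; conversely, if $0 \neq h \in B$, then because $B$ is a domain, $B/\langle h \rangle$ has dimension $d-1$, so $\len(B/(\langle h \rangle + \m^{k+1}))$ eventually agrees with a polynomial of degree $d-1$ (Lemma \ref{dimension}), which cannot dominate the degree-$d$ polynomial $\binom{k+d}{d}$ infinitely often by Lemma \ref{poly degree}. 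Hence $F = \{0\}$, so $I = f^{-1}(F)$ is A$\m$AC by Lemma \ref{basic lemma}(5), and then $P \cap I$ is A$\m$AC by Lemma \ref{basic lemma}(2).

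To conclude via Lemma \ref{nonempty lemma}, I fix a stage $s$ and exhibit an element of $P_s \cap f_s^{-1}(F_s)$. The multiplicity of $X \cap H_i$ tending to infinity forces $h_i \notin I$ for large $i$ (otherwise $X \cap H_i = X$ would have constant, finite multiplicity), so $B/\langle h_i \rangle$ is $(d-1)$-dimensional with multiplicity $e_i \to \infty$. Choose $i$ with $e_i \geq s$; Lemma \ref{dim bound lemma}(2) applied to the $(d-1)$-dimensional ring $B/\langle h_i \rangle$ with multiplicity $e_i$ gives $\len(B/(\langle h_i \rangle + \m^{k+1})) \geq \binom{k+d}{d}$ for all $k \leq e_i - 1$, in particular for all $k \leq s-1$. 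Thus $\bar h_i \in F_s$, giving the required non-empty stage.

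The principal subtlety lies in the calibration of the two halves of Lemma \ref{dim bound lemma}: the same polynomial $\binom{k+d}{d}$ must serve both as the unconditional Hilbert--Samuel lower bound for the $d$-dimensional domain $B$ and as the high-multiplicity lower bound for the $(d-1)$-dimensional quotient $B/\langle h \rangle$. The dimension shift $d \leadsto d-1$ between parts (1) and (2) of Lemma \ref{dim bound lemma} produces this match automatically, which is precisely why the ``limit point'' $h=0 \in B$ is approximable at every finite $\m$-adic stage by the high-multiplicity hypersurfaces $h_i$, making the A$\m$AC formalism applicable.
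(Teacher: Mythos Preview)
Your proof is correct and follows essentially the same route as the paper's: your class $F_s$ is exactly the paper's $Z_i$ (with $d=\alpha+1$), and both arguments identify $\{0\}\subset B$ as $\ula{\lim}F_s$ via Lemma~\ref{dim bound lemma}(1), then use Lemma~\ref{dim bound lemma}(2) on the $(d-1)$-dimensional quotients $B/\langle h_i\rangle$ to populate each finite stage before invoking Lemma~\ref{nonempty lemma}. The only cosmetic difference is that the paper phrases the target class as $ZD(B)$ and then observes $ZD(B)=\{0\}$ for a domain, whereas you work with $\{0\}$ from the outset.
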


\begin{proof}
The proof is similar to the proof of the previous Theorem.

Let $X = \Spec(B)$.
Let map $f: A \to B$ be the map induced by the inclusion of $X \to \Spec(A)$. 
Let $P = \ula{\lim} P_i$. Note that.
\[\{0\} = ZD(B) = \{h \in B \ | \ \dim(H) \geq \a+1\} = \ula{\lim} Z_i \text{ (proof is similar to \ref{dimension AmAC})}\]
\[\text{where, } Z_i = \{\-h \in B/\m^{i} \ |\ \len(\O_{H}/\m^{k+1}) \geq {k+\a+1 \choose \a+1}, \ \forall \ k \leq i-1\}\]
The hypothesis implies that for every $i>0$, there exists a hypersurface $h_i \in P$ \st $\len(\O_{H_i}/m^{k+1}) \geq {k+\a+1 \choose \a+1}$, forall $k \leq i-1$.
Applying Lemma \ref{basic lemma}, we get $P \cap f^{-1}(ZD(B)) = \ula{\lim} (P_i \cap f^{-1}(Z_i))$ is an A$\m$AC. 
Now using Lemma \ref{nonempty lemma}, gives us that $P \cap f^{-1}(ZD(B)) \neq \emptyset$.
Since $X$ is reduced and irreducible, $ZD(B) = \{0\}$. 
So $P \cap f^{-1}(0)$ is non-empty.
\end{proof}

We would like to end this Section by proving the following result using A$\m$AC.

\begin{thm}\label{milnor tjurina}
\milnorTjurina
\end{thm}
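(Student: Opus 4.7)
The plan is to argue by contradiction using the A$\m$AC framework developed in this section. Suppose no such constant $c$ exists for the given $\tau$ and $n := \dim A$. Then there is a sequence $f_i \in A$ of isolated hypersurface singularities with $\tau(f_i) = \tau$ fixed and $\mu(f_i) \to \infty$. The strategy is to produce a single $f \in A$ with $\tau(f) = \tau < \infty$ but $\mu(f) = \infty$; since finite Tjurina number forces finite Milnor number in characteristic zero, the resulting contradiction proves the theorem.

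Two A$\m$AC classes drive the argument. The first is $P := T(\tau)$ from Example \ref{basic example}.4, which is A$\m$AC by Lemma \ref{Tjurina AmAC}. The second is
\[
Q := \{f \in A : \mu(f) = \infty\} = \{f \in A : \dim V(J(f)) \geq 1\},
\]
where $J(f) := \langle \partial f/\partial x_1, \ldots, \partial f/\partial x_n \rangle$ is the Jacobian ideal. Mimicking Lemma \ref{dimension AmAC} with $J(f)$ in place of the singular-locus Jacobian, set
\[
Q_r := \{\bar f \in A/\m^r : \dim_\K A/(J(f) + \m^{r-1}) \geq r - 1\}.
\]
Each $Q_r$ is Zariski constructible by the Jacobian variant of Lemma \ref{singular locus semicontinuity} (take $\tilde I = \m^{r-1}$ and $y_j = \partial f/\partial x_j$ in Lemma \ref{basic semicontinuity}). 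Lemma \ref{dim bound lemma}(1) gives $Q \subseteq \ula{\lim} Q_r$, while Lemma \ref{dimension} (the Hilbert-Samuel polynomial of $A/J(f)$ has degree $\dim V(J(f))$) gives the reverse. Hence $Q = \ula{\lim} Q_r$ is A$\m$AC.

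Next, verify that $(P \cap Q)_r = P_r \cap Q_r$ is nonempty for every $r$. Choose $f_i$ from the sequence with $\mu(f_i) \geq r - 1$. The local ring $A/J(f_i)$ is Artinian of dimension $0$ and multiplicity $\mu(f_i) \geq r - 1$, so Lemma \ref{dim bound lemma}(2) (case $d = 0$, $e = \mu(f_i)$, evaluated at $n = r - 2 \leq e - 1$) yields
\[
\dim_\K A/(J(f_i) + \m^{r-1}) \geq \binom{r-1}{1} = r - 1,
\]
whence $\bar f_i \in Q_r$. Simultaneously $\bar f_i \in P_r$ since $f_i \in P$. By Lemma \ref{basic lemma}(2), $P \cap Q = \ula{\lim}(P_r \cap Q_r)$ is A$\m$AC, and Lemma \ref{nonempty lemma} produces the desired $f \in P \cap Q$.

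This $f$ yields the contradiction. Since $\mu(f) = \infty$, the critical scheme $V(J(f))$ has a positive-dimensional component $C$ through the origin; every partial $\partial f/\partial x_j$ vanishes on $C$, so pulling $f$ back along the normalization of any one-dimensional irreducible component of $C_{\mathrm{red}}$ gives a formal power series in a single variable with vanishing derivative, hence (in characteristic zero) constant. Since $f(0) = 0$, this constant is $0$, so $C_{\mathrm{red}} \subseteq V(f) \cap V(J(f)) = \S_{V(f)}$, forcing $\dim \S_{V(f)} \geq 1$ and $\tau(f) = \infty$, contradicting $f \in P$. The main obstacle is the A$\m$AC description of $Q$ in the second paragraph; once it parallels Lemma \ref{dimension AmAC} the remaining steps are routine manipulations of A$\m$AC classes via Lemmas \ref{basic lemma} and \ref{nonempty lemma}.
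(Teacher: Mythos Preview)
Your proof is correct and follows essentially the same route as the paper's: your class $Q$ is the paper's $D(1)=\{h:\mu(h)=\infty\}$, your inverse system $Q_r$ coincides (up to an index shift and a harmless off-by-one in the inequality) with the paper's $\bar K(>r)$, and both arguments conclude by intersecting with $T(\tau)$ and invoking the nonemptiness lemma. Your final paragraph, where you use a normalization/chain-rule argument in characteristic zero to show $\mu(f)=\infty\Rightarrow\tau(f)=\infty$, actually makes explicit a step the paper asserts without justification (namely that $\{h:\mu(h)=\infty\}=\{h:\dim\S_H\geq 1\}$), so your write-up is slightly more careful there.
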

\begin{proof}
Let $\pi_r : A \to A/\m^r$ and $\pi_{s,r}: A/\m^s \to A/\m^r$ denote the canonical projection maps.

\textbf{Claim:} $D(1) = \{h \in A = \K[\![x_1,\ldots,x_n]\!]\ |\ \mu(h) = \infty\}$ is A$\m$AC. 

Note the set given by $K(\neq r) = \{h \in A \ |\ \mu(h) \neq r\} = \pi_{r+1}^{-1}(\{\-h \in A/\m^{r+1} \ |\ \mu(\-h) \neq r\})$ is A$\m$AC (as complement is $\m$-adically open). (Similar to Lemma \ref{Tjurina AmAC}).
Similarly $K(>r) = \{h \in A \ |\ \mu(h) > r\} = \pi_{r+1}^{-1}(\{\-h \in A/\m^{r+1} \ |\ \mu(\-h) > r\})$ is also an A$\m$AC.
$$D(1) = \bigcap_r K(\neq r) = \bigcap_r K(>r) = \ula{\lim} \-K(>r)$$
where $\-K(>r) := \{\-h \in A/\m^{r+1} \ |\ \mu(\-h) > r\}$.
Note $D(1) = \{h \in A \ |\ \dim(\S_H) \geq 1\}$.

Let $T(\tau) = \{h \in A \ |\ \tau(h) = r\}$, as defined in example \ref{basic example}.4. 
As seen in Lemma \ref{Tjurina AmAC}, $T(\tau) = \pi_{r+1}^{-1}(\-T(\tau))$ is an A$\m$AC.
\vspace{10pt}

Note $T(\tau)$ is a collection of all possible hypersurfaces which satisfy the hypothesis of the Theorem (albeit with repetition up to isomorphism).

Assume for a contradiction $\mu$ is not bounded on the set $T(\tau)$.
\begin{align} \nonumber
\begin{split}
& T(\tau) \cap K(>r) \neq \emptyset \quad \forall r \text{ sufficiently larger than } \tau\\
\implies &\pi_{r,\tau}^{-1}(\-T(\tau)) \cap \-K(>r) \neq \emptyset \quad \forall r \text{ sufficiently larger than } \tau \quad \text{ (by nonemptiness Lemma \ref{nonempty lemma})}\\
\implies &T(\tau) \cap \ula{\lim} \-K(>r) \neq \emptyset \ \ \quad \text{ (by nonemptiness Lemma \ref{nonempty lemma})}\\
\implies &\-T(\tau) \cap D(1) \neq \emptyset
\end{split}
\end{align}
This contradicts the fact that any $h \in \-T(\tau)$ can only have isolated singularity.
\end{proof}


\begin{rmk}
The proof also shows us that, the same A$\m$AC class can be obtained as the inverse limit of two different inverse systems. (For example, the inverse system used to define $D(1)$ in example \ref{basic example}.7  is different from the inverse system used in this proof)
\end{rmk}
\section{Another result for singular surfaces}

Before we get to the result, we will need a refinement of Lemma \ref{dim bound lemma} in the case of rings of dimension 1. We also provide and make use of more examples of A$\m$AC.

\begin{defn}
Let $R$ be a Noetherian ring and $I$ be an ideal in $R$. We define the \textbf{torsion of $R$ supported on $V(I)$} as $$T(R,I) = \{x \in R| I^n x = 0 \text{ for some }n\} = \ura{lim} \ Hom(\frac{R}{(I^n)},R)$$

Note that since $R$ is Noetherian, $\sqrt{I}^n \subset I \subset \sqrt{I}$, so $T(R,I) = T(R,\sqrt{I})$. In particular, $T(R,I)$ only depends on $V(I)$, the variety defined by $I$.
\end{defn}

\begin{lemma}\label{dim 1 torsion bound lemma}
Let $K$ be an infinite field. Let $R$ be a Noetherian local complete K-algebra of dimension $1$ with maximal ideal $\m$ \st $R/\m = K$. Let  $t$ and $e$ be $\len(T(R,\m))$ and $e(R,\m)$ respectively. Then we have the following.

For a generic $x \in \m-\m^2$, and for all $k \in \bb{Z}_{\geq 0}$
\begin{equation}ek \leq \len(\frac{R}{\<x^{k+1}\>}) \leq ek + t\end{equation}
And if $k\leq t$, we can improve the lower bound as follows.
\begin{equation}\len(\frac{R}{\<x^{k+1}\>}) \geq (e+1)k \end{equation}
And if $k \geq t$, the upper bound is in fact achieved.
\begin{equation}\len(\frac{R}{\<x^{k+1}\>}) = ek + t \end{equation}
\end{lemma}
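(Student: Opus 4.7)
The plan is to separate the $\m$-torsion $T := T(R, \m)$ from $R$ and work in the torsion-free quotient $\-{R} := R/T$, which is one-dimensional Cohen--Macaulay (its depth is at least $1$ since no non-zero element is annihilated by $\m$). For a generic $x \in \m - \m^2$, I would use the existence of superficial elements over an infinite residue field (cf.\ \cite{Sam51}) to arrange that $x$ is superficial for $\m$ and that its image $\-{x}$ is a non-zero-divisor on $\-{R}$. This gives $e(R, \<x\>) = e(R, \m) = e$ and $e(\-{R}, \<\-{x}\>) = e$.

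Reducing the short exact sequence $0 \to T \to R \to \-{R} \to 0$ modulo $\<x^{k+1}\>$ gives
\[
0 \to T/(T \cap \<x^{k+1}\>) \to R/\<x^{k+1}\> \to \-{R}/\<\-{x}^{k+1}\> \to 0.
\]
In the one-dimensional Cohen--Macaulay ring $\-{R}$, multiplication by $\-{x}^j$ induces an isomorphism $\-{R}/\<\-{x}\> \iso \<\-{x}^j\>/\<\-{x}^{j+1}\>$, and the Cohen--Macaulay identity yields $\len(\-{R}/\<\-{x}\>) = e(\-{R}, \<\-{x}\>) = e$. Summing length contributions over $j = 0, \dots, k$ gives $\len(\-{R}/\<\-{x}^{k+1}\>) = (k+1)e$; combined with additivity of length in the above short exact sequence, this yields a formula of the form
\[
\len(R/\<x^{k+1}\>) \;=\; (k+1)e \;+\; \len\bigl(T/(T \cap \<x^{k+1}\>)\bigr).
\]
The estimate $0 \leq \len(T/(T \cap \<x^{k+1}\>)) \leq t$ then yields bound (1).

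For the sharpened bounds (2) and (3), the key observation is: if $y = x^{k+1} z \in T$, then mapping to $\-{R}$ gives $\-{x}^{k+1}\-{z} = 0$, and the non-zero-divisor property of $\-{x}$ forces $\-{z} = 0$, i.e.\ $z \in T$; hence $T \cap \<x^{k+1}\> \subseteq x^{k+1} T$. The descending chain $T \supseteq xT \supseteq x^2 T \supseteq \cdots$ strictly decreases until stabilizing at $0$: if $x^{i+1} T = x^i T$, then $\m \cdot x^i T \supseteq x^i T$ forces $x^i T = 0$ by Nakayama. Each strict drop decreases length by at least $1$, so the chain reaches $0$ within $t$ steps, giving $x^t T = 0$. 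Consequently, for $k \geq t$ we have $T \cap \<x^{k+1}\> \subseteq x^{k+1} T = 0$, pinning down the torsion contribution to exactly $t$ and establishing (3); for $k < t$, the first $k+1$ terms of the chain are strictly decreasing so $\len(T/x^{k+1} T) \geq k+1$, which refines the lower bound to (2).

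The main obstacle is verifying that the generic $x$ simultaneously satisfies superficiality for $\m$ and regularity on $\-{R}$; both are non-empty Zariski open conditions on the $K$-vector space $\m/\m^2$ and are available over an infinite residue field. A secondary technical point is the careful bookkeeping at the boundary $k = t$, where the two refined bounds must meet consistently, and a sanity check on small examples (e.g.\ $R = K[\![t]\!]$ with $e=1$, $t=0$, where $\len(R/\<x^{k+1}\>) = k+1$) will be important to fix any off-by-one shifts in the stated constants.
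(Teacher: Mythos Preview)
Your approach is correct and essentially the same as the paper's: both separate the torsion $T$ from the torsion-free quotient and compute lengths in each piece. The paper realizes $R$ as a finite module over the PID $K[\![x]\!]$ via Noether normalization and invokes the structure theorem to get a splitting $R \cong K[\![x]\!]^e \oplus T$, whereas you use the short exact sequence together with the Cohen--Macaulay property of $R/T$; your observation $T \cap \langle x^{k+1}\rangle = x^{k+1}T$ is precisely what that splitting gives. For the refined bounds the paper tracks the decreasing sequence $\len(\langle x^i\rangle/\langle x^{i+1}\rangle)$ in $R$ while you track $x^iT$ inside $T$, and these differ termwise by the constant $e$.

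One small imprecision in your argument for bound (2): the claim that for $k<t$ the first $k+1$ inclusions $T\supsetneq xT\supsetneq\cdots$ are all strict fails once the chain has already reached $0$ at some step $q$ with $q\le k<t$; but in that range $\len(T/x^{k+1}T)=t\ge k+1$ anyway, so the conclusion survives (the paper's version has the same boundary case). Your suspicion about an off-by-one in the stated constants is well founded---both your computation and the paper's own argument produce $(k+1)e$, not $ek$, as the torsion-free contribution.
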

\begin{proof}
Use Noether normalization to get that $R$ is integral over $K[\![x]\!]$.
Also, choose $x$ so that the multiplicity of $R$ w.r.t $\<x\>$ is the same as the multiplicity of $R$ w.r.t $\m$.

But $K[\![x]\!]$ is a PID. So torsion-free finitely generated modules are free.
Torsion w.r.t $x$ is the same as the torsion w.r.t $\m$ as $\sqrt{\<x\>} = \m$.
So $R/T(R,\m)$ is a free $K[\![x]\!]$ module. Let $\a$ be its rank. Now,
\begin{align} \nonumber
\begin{split}
e & = \text{the multiplicity of } R \text{ w.r.t } \<x\>\\
& = \text{the multiplicity of } R/T(R,\m) \text{ w.r.t } \<x\>\\
& = \text{the rank of } R/T(R,\m) \text{ over } K[\![x]\!] \ (\text{As the Hilbert polynomial of the free module is } \a n)\\
& = \a 
\end{split}
\end{align}

In particular $R \iso K[\![x]\!]^{e} \oplus T(R,\m)$ as a $K[\![x]\!]$-module.
The inequalities (4) are just the length of this module (modulo $\<x^k\>$) with and without torsion. 
Also note that for large $k$, the length must be $ek+t$.
\\

For (5) and (6), we first note that
\[\len\bigg(\frac{R}{\<x^{k+1}\>}\bigg) = \len\bigg(\frac{R}{\<x\>}\oplus\frac{\<x\>}{\<x^{2}\>}\oplus \cdots \oplus \frac{\<x^{k}\>}{\<x^{k+1}\>}\bigg) = \sum_{i=0}^k \len\bigg(\frac{\<x^{i}\>}{\<x^{i+1}\>}\bigg)\]
We have a sequence of maps 
\[\frac{R}{\<x\>} \xra{\cross x} \frac{\<x\>}{\<x^{2}\>} \xra{\cross x} \cdots \xra{\cross x} \frac{\<x^{k}\>}{\<x^{k+1}\>} \xra{\cross x} \frac{\<x^{k+1}\>}{\<x^{k+2}\>} \xra{\cross x} \cdots\]
We see that each of these maps is surjective. In particular, the length of these is a decreasing function and eventually must become a constant $e$ (which is the rank of the free module).
So there exists $q$ \st
\[\len(\frac{\<x^{i}\>}{\<x^{i+1}\>}) 
\begin{cases}
\geq e+1, \text{ if } i\leq q\\
= e, \text{ if } i > q
\end{cases}\]
Now $q$ can be at most $t$ because we have a finite amount of torsion given by $\len(T(R,\m))$.

Finally, we sum up the inequalities above, from $i = 1$ to $k$. Since we are adding $e$, for every $k>q$, and $ek+t$ must be eventually obtained. So $ek+t$ must be obtained at $q$ itself. In conclusion, after summing up we get the following which proves the Lemma:
\[\len(\frac{R}{\<x^{k+1}\>})
\begin{cases}
\geq (e+1)k, \text{ if } k\leq q\\
= ek + t \geq (e+1)k, \text{ if } q < k\leq t\\
= ek + t, \text{ if } t < k
\end{cases}\]

\end{proof}

\begin{exmp}\label{advanced example}
Here are some more examples of A$\m$AC. As before let $B = \K[\![x_1,\ldots,x_2]\!]/I$ be \st $B$ is equidimensional of  dimension $\a +1$ and we denote $H = \Spec(B/\<h\>)$, for any $h \in B$.

\begin{enumerate}
\setcounter{enumi}{7}
\item $M(d,e) := \{h \in B\ |\ \dim(\S^{\a}_{H}) \geq d \text{ and if } \dim(\S^{\a}_{H})=d \text{ then } m(\S^{\a}_{H}) \geq e\}$ (where $m(\S^{\a}_{H})$ is the multiplicity of $\S^{\a}_{H}$ w.r.t. $\m$) (for proof see lemma \ref{multiplicity AmAC})

\begin{rmk}
We note that $M(d,e) \subset M(d',e')$ if $(d , e) \geq (d' , e')$ in the lexicographic ordering. This gives us a stratification of the ideal $\m$. We can obtain a stratification of $P \subset \m$, simply by intersecting with this stratification. Note that the smallest $(d,e)$ (in the lexicographic ordering) \st $M(d,e) \cap P$ is non-empty  $M(d,e)$ has the property that $M(d,e) \cap P$ is $\m$-adically open in $P$.
\end{rmk}

\item $C(e,t) := \{h \in B\ |\ \dim(\S_{H}) = 1 \text{ and } m(\S_{H}) = e \text{ and } \len(T (\O_{\S_{H}}, m)) = t\}$ (where $m(\S_{H})$ is the multiplicity of $\S_{H}$ w.r.t to $\m$) (for proof see lemma \ref{curve AmAC})

\end{enumerate}
\end{exmp}

\begin{lemma}\label{multiplicity AmAC}
$M(d,e)$ (defined in example \ref{advanced example}.8) is an A$\m$AC.
\end{lemma}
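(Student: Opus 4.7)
The plan is to follow the pattern of Lemmas \ref{Tjurina AmAC} and \ref{dimension AmAC}, realising $M(d,e)$ as the inverse limit of Zariski constructible subsets $M_r(d,e) \subset B/\m^r$. The role of the bound $\binom{k+d}{d}$ used for $D(d)$ will be played by a two-piece polynomial bound that simultaneously forces the singular locus to have dimension $\geq d$ and, when the dimension is exactly $d$, multiplicity $\geq e$. Constructibility is immediate from Lemma \ref{singular locus semicontinuity}; the substance of the argument is designing the correct bound and showing it characterises $M(d,e)$.

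Concretely, I would define
$$b(k) := \begin{cases} \binom{k+d+1}{d+1} & \text{if } 0 \le k \le e-1,\\[2pt] \displaystyle\sum_{i=0}^{\min(e,d)} (-1)^i \binom{e}{i+1}\binom{k+d-i}{d-i} & \text{if } k \ge e-1, \end{cases}$$
noting that the two expressions agree at $k=e-1$ (this is the base-case identity inside the proof of Lemma \ref{dim bound lemma}(3)). Then set
$$M_r(d,e) := \bigl\{\bar h \in B/\m^r \ \big|\ \len(\O_{\S^{\a}_H}/\m^{k+1}) \ge b(k) \text{ for all } k \le r-2\bigr\},$$
which is Zariski constructible by Lemma \ref{singular locus semicontinuity}, and claim $M(d,e) = \ula{\lim} M_r(d,e)$.

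For the inclusion $M(d,e) \subset \ula{\lim} M_r(d,e)$, take $h \in M(d,e)$ and set $R := \O_{\S^{\a}_H}$. If $\dim R = d$ with multiplicity $e' \ge e$, Lemma \ref{dim bound lemma}(2) gives $\chi_R^{\m}(k) \ge \binom{k+d+1}{d+1}$ for $k \le e'-1$, and part (3) applied to $R$, together with the monotonicity of the formula in $e$ (which I would deduce by identifying it with the Hilbert--Samuel polynomial of $\K[x,y_1,\dots,y_d]/(x^e)$, which grows with $e$), yields $\chi_R^{\m}(k) \ge b(k)$ for $k \ge e'-1$. If $\dim R > d$, Lemma \ref{dim bound lemma}(1) gives $\chi_R^{\m}(k) \ge \binom{k+d+1}{d+1}$, and one must compare this with $b(k)$. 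The reverse inclusion is easier: if $h \notin M(d,e)$, the Hilbert--Samuel polynomial of $R$ has either degree $<d$ or degree $d$ with leading coefficient $<e/d!$, whereas $b$ is eventually a polynomial of degree $d$ with leading coefficient $e/d!$; Lemma \ref{poly degree} then forces $\chi_R^{\m}(k) < b(k)$ for some large $k$, so $\pi_r(h) \notin M_r(d,e)$ for $r$ sufficiently large.

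The main obstacle is the polynomial inequality $\binom{k+d+1}{d+1} \ge b(k)$ for $k \ge e-1$, needed both in the $\dim R > d$ case and in the transitional range $e-1 \le k \le e'-1$ of the $\dim R = d$ case. My plan is induction on $d$: the base $d=0$ reduces to $k+1 \ge e$, trivially true for $k \ge e-1$. For the inductive step, Pascal's identity gives $\binom{k+d+1}{d+1} - \binom{k+d}{d+1} = \binom{k+d}{d}$, while the second piece of $b$ satisfies the parallel recurrence $B_d^e(k) - B_d^e(k-1) = B_{d-1}^e(k)$ coming from modding out a generic linear form (as in Lemma \ref{inductive lemma}). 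The induction hypothesis at dimension $d-1$ then says the first difference of $\binom{k+d+1}{d+1} - b(k)$ is non-negative for $k \ge e-1$, and the difference itself vanishes at $k = e-1$, which closes the induction.
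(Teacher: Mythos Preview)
Your proposal is correct and follows the same strategy as the paper: define constructible sets $M_r$ via the growth bound of Lemma~\ref{dim bound lemma}(3) and show $M(d,e) = \ula{\lim} M_r$. Two remarks on the comparison.

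First, the paper's $M_r$ only imposes the bound for $e-1 \le k \le r-2$, so the first piece of your $b(k)$ is redundant: the reverse inclusion argument only ever uses large $k$, and for the forward inclusion your own analysis shows that the range $k<e-1$ adds nothing once the range $k\ge e-1$ is handled.

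Second, you are right to isolate the forward inclusion as the substantive step. The paper dispatches it in one line ``by Lemma~\ref{dim bound lemma}(3)'', but that lemma is stated for a ring of dimension exactly $d$ and multiplicity exactly $e$, so the cases $\dim R>d$ and multiplicity $e'>e$ genuinely require the polynomial inequality $\binom{k+d+1}{d+1}\ge B_d^e(k)$ for $k\ge e-1$ and the monotonicity of $B_d^e$ in $e$ that you supply. Your induction on $d$ is correct. A shortcut, implicit in your remark about $\K[x,y_1,\dots,y_d]/(x^e)$, is that the alternating sum collapses to the closed form
\[
B_d^e(k)=\binom{k+d+1}{d+1}-\binom{k-e+d+1}{d+1}\qquad(k\ge e-1),
\]
from which both the inequality $\binom{k+d+1}{d+1}\ge B_d^e(k)$ and the monotonicity $B_d^{e'}(k)\ge B_d^e(k)$ for $e'\ge e$ are immediate.
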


\begin{proof}
\begin{align} \nonumber
\begin{split}
M_r := \{\-h \in B/\m^{r} \ |\ \len(\O_{\S^{\a}_{H}}/\m^{k+1}) &\geq \sum_{i=0}^{\min(e,d)} (-1)^{i} {e \choose i+1}{n+d-i\choose d-i},\\ &\forall \ k \text{ \st } e-1 \leq k \leq r-2\}
\end{split}
\end{align}
As follows from Lemma \ref{singular locus semicontinuity}, $M_r$ is a Zariski constructible subset.
We will prove that $M(d,e) = \ula{\lim} M_r$. The proof is very similar to the proof of example \ref{basic example}.7.

$M(d,e) \subset \ula{\lim} M_r$ because if $h \in M(d,e)$ then $\-h = h+\m^{r}$ satisfies the condition of $M_r$ by Lemma \ref{dim bound lemma}(3).

Let $h \in B$ and $h \not\in M(d,e)$. Then, $\len(\O_{\S^{\a}_{H}}/\m^{k+1})$ is a polynomial of degree $\leq d$ and if its degree is $d$ then its highest coefficient is lesser than $e/d!$ (Lemma \ref{dimension} and definition of multiplicity). 
So for large $k$, $\len(\O_{\S^{\a}_{H}}/\m^{k+1}) < \sum_{i=0}^{\min(e,d)} (-1)^{i} {e \choose i+1}{n+d-i\choose d-i}$ which either has a higher degree or a higher leading coefficient. 
So $\-h \not\in M_k$, and thus $h \not\in \ula{\lim} M_i$.
This proves that $\ula{\lim} M_r \subset M(d,e)$.
\end{proof}

\begin{lemma}\label{curve AmAC}
$C(e,t)$ (defined in \ref{advanced example}.9) is an A$\m$AC.
\end{lemma}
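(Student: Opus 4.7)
The plan is to exhibit $C(e,t)$ as the inverse limit of Zariski constructible subsets $C_r(e,t) \subset B/\m^r$, mirroring the strategy of Lemmas \ref{Tjurina AmAC}, \ref{dimension AmAC}, and \ref{multiplicity AmAC}. The crucial new ingredient is Lemma \ref{dim 1 torsion bound lemma}, which shows that for a $1$-dimensional complete local $\K$-algebra $R$, the pair $(e(R,\m),\len(T(R,\m)))$ is recovered from finitely many values of the function $k \mapsto \len(R/\<x^{k+1}\>)$ evaluated at a generic $x \in \m-\m^2$: the function stabilizes to the linear form $ek + t$ at $k = t$, and strictly dominates it for $k < t$.

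First I would encode the genericity of $x$ by working in the product $B/\m^r \times \m/\m^r$. For each $r$ sufficiently large compared to $e + t$, define $\widetilde{C}_r(e,t)$ to be the locus of pairs $(\bar h,\bar x)$ such that (a) $\bar x \in (\m - \m^2)/\m^r$; (b) $\len(\O_{\S_H}/\<\bar x^{k+1}\>+\m^{r-1})$ matches the stable value $ek + t$ for every $k$ with $t \le k \le r-3$; and (c) the same length exceeds $(e+1)k$ for $0 \le k \le t$. By Lemmas \ref{basic semicontinuity} and \ref{singular locus semicontinuity}, each such length is upper semi-continuous in $(\bar h,\bar x)$, so $\widetilde{C}_r(e,t)$ is Zariski constructible, and I would take $C_r(e,t)$ to be its image under projection to $B/\m^r$, which is again constructible by Chevalley's theorem.

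Next the identity $C(e,t) = \ula{\lim}_r C_r(e,t)$ must be verified in both directions. The forward inclusion follows directly from Lemma \ref{dim 1 torsion bound lemma}. For the reverse I would assemble $\widetilde{C}(e,t) := \ula{\lim}_r \widetilde{C}_r(e,t)$, itself an A$\m$AC subset of $B \times B$; the nonemptiness Lemma \ref{nonempty lemma} combined with the projection property of Lemma \ref{basic lemma}(4) then furnishes a coherent witness $x \in \m - \m^2$ whose length data at every $k$ force $\dim \S_H = 1$, $m(\S_H) = e$, and $\len(T(\O_{\S_H},\m)) = t$, again via Lemma \ref{dim 1 torsion bound lemma}.

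The main obstacle I anticipate is ensuring that the truncated length $\len(\O_{\S_H}/\<\bar x^{k+1}\> + \m^{r-1})$ genuinely equals the intrinsic $\len(\O_{\S_H}/\<x^{k+1}\>)$ for an honest lift $x$ of $\bar x$ --- in other words, controlling the Artin--Rees style truncation error uniformly across $C(e,t)$. The cleanest remedy is to treat the pair $(\bar h,\bar x)$ as a single A$\m$AC object in $B \times B$, so that coherence and genericity of $\bar x$ are both secured by the inverse limit machinery; one then only needs the ranges of $k$ in conditions (b) and (c) to be generous enough relative to $r$ that no finite-order artefact of the truncation can masquerade as genuine torsion.
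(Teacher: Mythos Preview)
Your overall strategy---realizing $C(e,t)$ as an inverse limit of constructible sets cut out by length conditions on $\O_{\S_H}/(\langle x^{k+1}\rangle+\m^{r-1})$, with the auxiliary parameter $x$ carried along in a product space and then projected away---is exactly the shape of the paper's argument. The genuine gap is in how you encode the \emph{genericity} of $x$. Your $C_r(e,t)$ is the projection of $\widetilde C_r(e,t)$, so $\bar h\in C_r(e,t)$ says only that \emph{some} $\bar x\in\m-\m^2$ witnesses the length equalities; when you pass to the limit and invoke Lemma~\ref{basic lemma}(4) to produce a coherent $x$, nothing forces this $x$ to be generic in the sense required by Lemma~\ref{dim 1 torsion bound lemma}. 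For a non-generic parameter $x$ the function $k\mapsto\len(R/\langle x^{k+1}\rangle)$ is still eventually linear, but its slope is the multiplicity of $R$ with respect to $\langle x\rangle$, which can strictly exceed $e(R,\m)$. Concretely, if $\O_{\S_H}\cong\K[\![t^2,t^3]\!]$ (so the true invariants are $e'=2$, $t'=0$) and one chooses the non-generic $x=t^3\in\m-\m^2$, then $\len(\O_{\S_H}/\langle x^{k+1}\rangle)=3k+3$ for all $k\ge 0$; this pair $(\bar h,\bar x)$ satisfies your conditions (b) and (c) for $(e,t)=(3,3)$ at every level $r$, so $h\in\ula{\lim}\,C_r(3,3)$ while $h\notin C(3,3)$. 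Your reverse inclusion therefore fails, and the appeal to Lemma~\ref{dim 1 torsion bound lemma} in your last step is illegitimate because that lemma speaks only about generic $x$.

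The paper repairs exactly this point by replacing ``there exists $x$ with length equal to $ke+t$'' by ``the \emph{minimum} over $x\in\m-\m^2$ of the length equals $ke+t$''. In the paper's notation one sets $C_r=P_r(ke+t)\setminus\bigcup_{i<ke+t}P_r(i)$, where $P_r(\alpha)$ is the projection of the locus $\Pi_r(\alpha)$ of pairs $(\bar h,\bar x)$ with length $\alpha$. Because the length is upper semi-continuous in $x$ (Lemma~\ref{basic semicontinuity}), the minimum is attained on a Zariski-open set of $x$'s and hence at a generic $x$; Lemma~\ref{dim 1 torsion bound lemma}(6) then legitimately reads off the pair $(e,t)$. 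Your setup is salvaged by the same device: keep your $\widetilde C_r(e,t)$, but additionally subtract from its projection the projections of the loci where some $x$ achieves a strictly smaller length, so that membership in $C_r(e,t)$ forces the minimum (hence the generic value) to be $ek+t$.
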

\begin{proof}
For the sake of brevity, we define 
$$S_h(x,k,r):=\len(\frac{\O_{\S_{H}}}{\<x^k\>+\m^{r}})$$
\begin{align} \nonumber
\begin{split}
C_r := \{\-h \in B/\m^{r} \ |\ & \forall \ k > t \text{ \st } ke + t \leq r-2\\
& \min_{x \in \m-\m^2}(S_h(x,k,r-1)) = ke + t \}
\end{split}
\end{align}
First, we will prove that $C_r$ is a Zariski constructible set.
For this we define
\begin{align} \nonumber
\begin{split}
\Pi_r(\a) := \{(\-h,x) \in B/\m^{r} \cross (\m/\m^r-\m^2/\m^r) \ | 
& \ S_h(x,k,r-1) = \a, \\ 
&\forall \ k > t \text{ \st } ke + t \leq r-2\}
\end{split}
\end{align}
\begin{align} \nonumber
\begin{split}
P_r(\a) := \{ \-h \in B/\m^{r} \ | \exists \ x \in (\m/\m^r-\m^2/\m^r)\text{ \st }
& \ S_h(x,k,r-1) = \a, \\ 
&\forall \ k > t \text{ \st } ke + t \leq r-2\}
\end{split}
\end{align}

Now $\Pi_r(\a)$ is a Zariski constructible set, as it is the inverse image of an upper semi-continuous function (produced using arguments of Lemma \ref{basic semicontinuity} and \ref{singular locus semicontinuity}). And so its first projection $P_r(\a)$ is also Zariski constructible.
Finally $C_r = P_r(ke+t) - \bigcup_{i=0}^{ke+t-1}P_r(i)$ is also Zariski constructible.

We will prove that $C(e,t) = \ula{\lim} C_i$.

$C(e,t) \subset \ula{\lim} C_i$ because if $h \in C(e,t)$, then $\-h = h+\m^{r}$ satisfies the condition of $C_r$ by Lemma \ref{dim 1 torsion bound lemma}(6).

Let $h \in  \ula{\lim} C_i$, then $S_h(x,k,r-1) = \len(\frac{\O_{\S_{H}}}{\<x^k\>+\m^{r-1}}) = ke + t$. But since $r-1 > ke + t$, $\O_{\S_{H}}/\<x^k\> = \O_{\S_{H}}/(\<x^k\>
+\m^{r-1})$. This implies that $\len(\O_{\S_{H}}/\<x^k\>) = ke + t$, which is a polynomial of degree $1$, the highest coefficient of this polynomial is $e$ and the constant term is $t$.
This implies $\dim \S_{H} = 1$ (Lemma \ref{dimension}).
Note that the set of all $x$ \st $x$ gives minimum length, is a Zariski open set modulo $\m^r$.
Therefore if $x$ has the minimum length then $x$ can be chosen to be a general element. So Lemma \ref{dim 1 torsion bound lemma}(6) can be applied to conclude that $\S_{H}$ has multiplicity $= e$ and torsion $= t$. Therefore $h \in C(e,t)$. 
This proves $\ula{\lim} C_i \subset C(e,t)$.
\end{proof}

In the next Lemma, we obtain $M(1,e)$ from example \ref{advanced example}.8, as an inverse limit of different sets.

\begin{lemma}
$$M(1,e) = D(2) \cup \ula{\lim} Q_r$$
\begin{align} \nonumber
\begin{split}
\text{where} \ Q_r := \{\-h \in B/\m^{r} \ |\ \forall \ x \in \m-\m^2 \text{ \st } &\len(\frac{\O_{\S^{\a}_{H}}}{\<x^k\>+\m^{r}}) \geq ek, \\ &\forall \ k \text{ \st } ek \leq r-2\}
\end{split}
\end{align}
\end{lemma}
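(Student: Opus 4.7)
The plan is to establish the equality by the two non-trivial inclusions; $D(2) \subset M(1, e)$ is immediate from the definition in Example \ref{advanced example}.8, since $\dim(\S^\a_H) \geq 2$ makes the clause ``$\dim = 1 \Rightarrow$ multiplicity $\geq e$'' vacuous. Throughout, write $R := \O_{\S^\a_H}$ and $T := T(R, \m)$ for its finite-length $\m$-torsion.

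For the inclusion $M(1, e) \setminus D(2) \subset \ula{\lim} Q_r$, I assume $\dim R = 1$ and $m := m(R, \m) \geq e$. For $x \in \m - \m^2$ and $k, r$ with $ek \leq r - 2$, I verify $\len(R/(\<x^k\>+\m^r)) \geq ek$ by splitting on whether $\bar x$ lies in some minimal prime of $R$. If yes, $R/\<x^k\>$ surjects onto $R/\p$ and retains dimension $1$; Lemma \ref{dim bound lemma}(1) applied to $R/\<x^k\>$ then gives $\len(R/(\<x^k\>+\m^r)) \geq r > ek$. Otherwise $\bar x$ is a non-zero-divisor on $R/T$, and the short exact sequence
\[ 0 \to T/(T \cap \<x^k\>) \to R/\<x^k\> \to (R/T)/\<\bar x^k\> \to 0 \]
together with $\len((R/T)/\<\bar x^k\>) = k \cdot e(R/T, \<\bar x\>) \geq km \geq ek$ shows $R/\<x^k\>$ is Artinian of length $L \geq ek$; Lemma \ref{dim bound lemma}(2) applied to this dimension-$0$ ring then yields $\len(R/(\<x^k\>+\m^r)) \geq \min(r, L) \geq ek$, using $r \geq ek+2$.

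For the converse $\ula{\lim} Q_r \subset D(2) \cup M(1, e)$, take $h \in \ula{\lim} Q_r$ with $h \notin D(2)$, so $\dim R \leq 1$. If $\dim R = 0$, then $R$ is Artinian, every $x \in \m$ is nilpotent, and eventually $\len(R/(\<x^k\>+\m^r)) = \len(R) < \infty$, contradicting the unbounded growth $\geq ek$ required by $Q_r$; hence $\dim R = 1$. Pick $x \in \m - \m^2$ with $\bar x$ a non-zero-divisor on $R/T$ (a Zariski-dense open condition since $R/\m$ is infinite). For $r$ large the Artinian quotient $R/\<x^k\>$ satisfies $\m^r = 0$, so $Q_r$ specializes to $\len(R/\<x^k\>) \geq ek$; the same short exact sequence gives $k \cdot e(R/T, \<\bar x\>) + \len(T/(T \cap \<x^k\>)) \geq ek$. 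Since $\len(T/(T \cap \<x^k\>)) \leq \len(T)$ is bounded, dividing by $k$ and letting $k \to \infty$ yields $e(R/T, \<\bar x\>) \geq e$. For generic $x$, $e(R/T, \<\bar x\>) = e(R/T, \m_{R/T}) = m(R, \m)$, where the last equality holds because the finite-length $T$ alters $\chi_R^\m$ by only a bounded constant and so preserves the leading coefficient. Therefore $m(\S^\a_H) \geq e$, i.e., $h \in M(1, e)$.

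The main obstacle I anticipate is the possibility of an embedded prime at $\m$: when $\m \in \operatorname{Ass}(R)$, every $x \in \m - \m^2$ is a zero-divisor in $R$, so one cannot naively choose a ``generic NZD in $R$''. The fix is to pass to $R/T$, where the embedded component is quotiented out and $\bar x$ becomes a genuine non-zero-divisor, while the complementary case $\bar x \in \p$ is absorbed cleanly by the dimension-$1$ bound in Lemma \ref{dim bound lemma}(1). This case-split reduces the whole verification to two direct applications of Lemma \ref{dim bound lemma}, bypassing any Artin--Rees or superficial-element subtleties.
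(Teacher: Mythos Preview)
Your proof is correct, and it is actually more careful than the paper's own argument in one respect. The paper establishes both inclusions by invoking Lemma~\ref{dim 1 torsion bound lemma}(4) directly: for $M(1,e)\setminus D(2)\subset \ula{\lim}Q_r$ it cites the lower bound $\len(R/\<x^{k+1}\>)\geq ek$, and for $\ula{\lim}Q_r\subset M(1,e)$ it cites the upper bound $\len(R/\<x^{k+1}\>)\leq mk+t$ (with $m$ the true multiplicity) to force $m\geq e$. The subtlety is that Lemma~\ref{dim 1 torsion bound lemma} is stated only for \emph{generic} $x$, whereas membership in $Q_r$ is a condition on \emph{every} $x\in\m-\m^2$; the paper does not spell out why the lower bound persists for non-generic $x$.

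Your case split on whether $\bar x$ lies in a minimal prime of $R$ fills exactly this gap: in the first case $R/\<x^k\>$ stays one-dimensional and Lemma~\ref{dim bound lemma}(1) gives length $\geq r>ek$; in the second case $\bar x$ is a non-zero-divisor on the Cohen--Macaulay ring $R/T$, so $\len((R/T)/\<\bar x^k\>)=k\cdot e(R/T,\<\bar x\>)\geq k\cdot e(R/T,\m)=km\geq ke$, and the zero-dimensional instance of Lemma~\ref{dim bound lemma}(2) finishes. This is a genuinely more elementary route---it avoids Lemma~\ref{dim 1 torsion bound lemma} entirely and reduces everything to Lemma~\ref{dim bound lemma} plus the standard filtration $\<x^i\>/\<x^{i+1}\>\cong (R/T)/\<\bar x\>$---at the cost of a slightly longer write-up. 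Two small points to tighten: in the reverse inclusion you should pick a single $x$ that is simultaneously a non-zero-divisor on $R/T$ \emph{and} satisfies $e(R/T,\<\bar x\>)=e(R/T,\m)$ (both are nonempty Zariski-open conditions, so this is fine), rather than switching $x$ mid-argument; and the dimension-$0$ contradiction tacitly uses $e\geq 1$, which holds in the only place this lemma is invoked (Theorem~\ref{surface result}).
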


\begin{proof}

As seen before $Q_r$ is a Zariski constructible subset. Suppose $h \in M(1,e)$, then either $\dim(\S^{\a}_h) \geq 2$, in which case $h \in D(2)$, or $\dim(\S^{\a}_h) = 1$. Then, by Lemma \ref{dim 1 torsion bound lemma}(4), $\-h \in Q_r$. So $M(1,e) \subset D(2) \cup \ula{\lim} Q_r$.

Clearly, $D(2) \subset M(1,e)$.

Let $h \in \ula{\lim} Q_r$, then $\len(\frac{\O_{\S^{\a}_{H}}}{\<x^k\>+\m^{r}})$ is unbounded, so $\len(\O_{\S^{\a}_{H}})$ is infinite, which implies $\dim(\S^{\a}_{H}) > 0$.  If $\dim(\S^{\a}_H)\geq 2$, then $h \in D(2) \subset M(1,e)$. If $\dim(\S^{\a}_H) = 1$, then we can apply Lemma \ref{dim 1 torsion bound lemma}(4) to conclude that multiplicity of $\S^{\a}_{H}$ is $\geq e$. So $h \in M(1,e)$.
Therefore, $M(1,e) = D(2) \cup \ula{\lim}Q_r$.
\end{proof}

Observe that in Theorems \ref{AmAC result} and \ref{multi result}, as multiplicity tends to infinity we get a variety of higher dimension, but the multiplicity of this new variety is not infinity anymore. This is because the multiplicity of different dimensional variety are defined as coefficients of different degree terms of the Hilbert polynomial. 
This leads us to believe that given any Noetherian ring $R$, there is a multiplicity associated with every natural number $i$ (i.e. $i \in \bb{Z}_{\geq 0}$)) representing the multiplicity of $R$ supported on $i$ dimensional primes. So if $\dim(R) \leq d$ then the multiplicity associated with all $i>d$ vanishes.
\\

For example, if $\dim(R)\leq 1$ then we get the following two numbers: (i) the length of torsion supported on maximal ideals, and (ii) the multiplicity of $R$ supported on dimension 1 primes.
As shown after Definition \ref{multiplicity tuple}, the multiplicity supported on dimension 1 primes is the multiplicity of $R$ if $\dim(R)=1$ and $0$ if $\dim(R)=0$. The multiplicities of dimension zero rings are supported on dimension $0$ primes (i.e. torsion points), which we have already accounted for. This leads us to the following definition:

\begin{defn}\label{multiplicity tuple}
Let $K$ be an infinite field.
Let $R$ be a local $K$-algebra of dimension $\leq 1$ with maximal ideal $\m$ \st $R/\m = K$.
Use Noether normalization to get that $R$ is integral over $A = K[\![x]\!]$.
Also, choose $x$ so that the multiplicity of $R$ w.r.t $\<x\>$ is the same as the multiplicity of $R$ w.r.t $\m$. 
Let $\chi^{\<x\>}(k) = ek+t$ for a large enough $k$. Define the multiplicity tuple of $R$ to be $(e,t)$.
\end{defn}

To illustrate the discussion above we will compute the multiplicity tuple in case of dimension 1 and dimension 0 rings.
If $\dim (R) = 1$ and the multiplicity of $R$ is $e$ and $\len(T(R,m)) = t$, then
in light of Lemma \ref{dim 1 torsion bound lemma}(6), the multiplicity tuple is $(e,t)$
If $\dim (R) = 0$ then the multiplicity of $R$ is $\len(R) = \len(T(R,m)) = t$. we can conclude that the multiplicity tuple is $(0,t)$
\\

Now as the multiplicity of zero-dimensional rings tends to infinity, we get a ring of dimension $1$. This suggests that $(1,0) > (0,n) \forall n$, which indicates we should use some kind of lexicographic ordering. This is demonstrated in the following Theorem:

\begin{thm}\label{surface result}
\surfaceResult
\end{thm}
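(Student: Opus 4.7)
The plan is to bound the multiplicity tuple $(e,t)$ of $\S_{X\cap H}$ lexicographically by first bounding $e$ and then $t$, combining Theorem \ref{AmAC result} with the A$\m$AC classes from Example \ref{advanced example} and the refined dimension-one estimates of Lemma \ref{dim 1 torsion bound lemma}. Because $X$ is equidimensional of dimension $2$ and $P\cap ZD(X)=\emptyset$, every $X\cap H$ with $h\in P$ is pure of dimension $1$, so $\dim\S_{X\cap H}\leq 1$. The possible tuples are therefore $(0,t)$ when the singular locus is zero-dimensional, and $(e,t)$ with $e\geq 1$ when it is one-dimensional.

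\textbf{Bounding $e$.} If $e(\S_{X\cap H})$ were unbounded along $P$, I could extract an infinite sequence $h_i\in P$ with $\dim\S_{X\cap H_i}=1$ and multiplicities tending to infinity (the dimension-zero stratum contributes only $e=0$), so Theorem \ref{AmAC result} at $d=1$ would produce $h\in P$ with $\dim\S_{X\cap H}\geq 2$, contradicting $\dim(X\cap H)=1$. Thus $e(P):=\sup_{h\in P}e(\S_{X\cap H})$ is finite, and, being integer-valued, it is attained. The easy case $e(P)=0$ (every $h\in P$ has isolated singular locus with $t=\len(\O_{\S_H})$) is handled identically: unboundedness of $t$ contradicts Theorem \ref{AmAC result} at $d=0$.

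\textbf{Bounding $t$ when $e(P)\geq 1$.} Lemma \ref{dim 1 torsion bound lemma}(5)--(6) yields that whenever $\dim\S_H=1$, $e(\S_H)=e(P)$, and $t(\S_H)\geq t_0$, then for generic $x\in\m-\m^2$ and every $k\geq t_0$,
\[
\len\bigl(\O_{\S_H}/\<x^{k+1}\>\bigr) \geq e(P)\cdot k + t_0.
\]
A direct case check shows that the same inequality holds trivially when $\dim\S_H\geq 2$ (both sides infinite), and also when $\dim\S_H=1$ with multiplicity $\geq e(P)+1$ (since then $(e(P)+1)k\geq e(P)k+k\geq e(P)k+t_0$ for $k\geq t_0$). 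Mimicking the constructibility arguments of Lemmas \ref{multiplicity AmAC} and \ref{curve AmAC} --- encoding the above as a projection ``there exists a lift $x$ modulo $\m^r$ satisfying the truncated inequality'' and invoking upper semi-continuity from Lemmas \ref{basic semicontinuity}--\ref{singular locus semicontinuity} --- this defines an A$\m$AC class
\[
\widetilde{S}(e(P),t_0) \;=\; M(1,e(P)+1)\ \cup\ \{h : \dim\S_H=1,\ e(\S_H)=e(P),\ t(\S_H)\geq t_0\}.
\]

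\textbf{Conclusion by inverse limit.} Maximality of $e(P)$ gives $M(1,e(P)+1)\cap P=\emptyset$, so $P\cap\widetilde{S}(e(P),t_0)=\{h\in P:e=e(P),\ t\geq t_0\}$. If $t$ were unbounded on this stratum, each $P\cap\widetilde{S}(e(P),t_0)$ would be non-empty; by Lemma \ref{basic lemma}(3) the countable intersection $\bigcap_{t_0}(P\cap\widetilde{S}(e(P),t_0))$ is again A$\m$AC, and its constructible layer at level $r$ is non-empty (witnessed by any $h_i$ with $t_i\geq r$), so Lemma \ref{nonempty lemma} would produce $h\in P$ in the intersection. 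But this intersection equals $P\cap M(1,e(P)+1)=\emptyset$, because torsion is finite for a Noetherian local ring of dimension $\leq 1$ and therefore no single $h$ can satisfy $t\geq t_0$ for every $t_0$ --- a contradiction. Hence $t$ is bounded on the stratum $\{h\in P:e=e(P)\}$, and its maximum $t(P)$ is attained. The main obstacle is calibrating the class $\widetilde{S}(e(P),t_0)$ so that it is simultaneously a constructible predicate on each $B/\m^r$ and captures exactly the stratum $\{\dim\S_H=1,\ e=e(P),\ t\geq t_0\}$ (modulo the larger, automatically-excluded $M(1,e(P)+1)$); the regime $k\geq t_0$ of Lemma \ref{dim 1 torsion bound lemma}(5)--(6) is precisely the ingredient that makes this encoding possible.
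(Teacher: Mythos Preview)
Your overall strategy matches the paper's: bound $e$ via Theorem~\ref{AmAC result}, then on the top-multiplicity stratum force an element of $P\cap M(1,e(P)+1)$, which is empty by maximality of $e(P)$. The bounding of $e$ and the treatment of the case $e(P)=0$ are correct. However, your construction of the auxiliary class $\widetilde{S}(e(P),t_0)$ contains a quantifier slip that invalidates the final contradiction.

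You pass from ``for generic $x$'' to ``there exists a lift $x$ modulo $\m^r$ satisfying the truncated inequality''. These are not interchangeable: the function $x\mapsto\len\bigl(\O_{\S_H}/(\langle x^{k+1}\rangle+\m^{r-1})\bigr)$ is \emph{upper} semi-continuous, so its generic value is the \emph{minimum}. Thus ``the generic $x$ satisfies $\len\geq c$'' is equivalent to ``\emph{every} $x$ satisfies $\len\geq c$'', not to ``some $x$ does''. With your existential reading, any $h$ whose singular locus admits a special (non-parameter) direction sits in every $\widetilde S_r$. Concretely, suppose $e(P)=2$ and $\O_{\S_H}\cong\K[\![y,z]\!]/\langle yz\rangle$, so $e(\S_H)=2$ and $t(\S_H)=0$. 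Taking $x=y$ gives $\len\bigl(\O_{\S_H}/(\langle y^{k+1}\rangle+\m^{r-1})\bigr)=k+r-1$, and $k+r-1\geq 2k+t_0$ holds throughout the truncated range $t_0\leq k\leq (r-2-t_0)/2$. Hence this $h$ lies in $\varprojlim\widetilde S_r$ for \emph{every} $t_0$, so $h\in P\cap\bigcap_{t_0}\widetilde{S}(e(P),t_0)$, yet $h\notin M(1,e(P)+1)$. Your claimed identity $\bigcap_{t_0}\widetilde{S}(e(P),t_0)=M(1,e(P)+1)$ is therefore false, and no contradiction is reached.

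The fix is to quantify universally over $x$ (equivalently, impose the bound on $\min_x$; this is exactly what Lemma~\ref{curve AmAC} does when it subtracts the strata $P_r(i)$ for $i<ke+t$ rather than taking the raw projection). With that correction your argument goes through and is then essentially the paper's, though the paper is more direct: it uses the presentation $M(1,e(P)+1)=D(2)\cup\varprojlim Q_r$ (where $Q_r$ already carries a $\forall x$ condition) and observes via Lemma~\ref{dim 1 torsion bound lemma}(5) that each $h_i\in L$ with torsion $\geq i$ has $\bar h_i\in Q_i$, so $L_i\cap Q_i\neq\emptyset$ for all $i$ and hence $L\cap M(1,e(P)+1)\neq\emptyset$ by Lemma~\ref{nonempty lemma}. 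No intermediate family $\widetilde S(e(P),t_0)$ or countable-intersection step is needed.
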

\begin{proof}

Since we have assumed that $\dim B = 2$, any hypersurface defined by $h \not\in ZD(X)$  must have dimension 1. So if $h \in P$, then $\dim(H) = 1$, which implies $\dim(\S_{H}) \leq 1$.
Now assume for the sake of  contradiction that their multiplicity is unbounded, then by Theorem \ref{AmAC result}, there is a hypersurface with singular locus of dimension $> 1$, which is a contradiction. So the multiplicity is bounded by some number $e$.
We chose $e = e(P)$ to be the maximum achieved by any of the hypersurfaces. In the case that all hypersurfaces (in $P$) intersect in isolated singularities, then $e(P) = 0$.

Now if $h$ is a hypersurface \st the multiplicity of $\S_{H}$ is less than $e$, then the multiplicity tuple is already less in the lexicographic ordering. So we define $L$ as follows.

\begin{align} \nonumber
\begin{split}
L := &\{h \in P \ |\ \dim(\S_{H}) = 1 \text{ and } m(\S_{H}) = e(P)\} \\= &\{h \in P \ |\ \dim(\S_{H}) \geq 1 \text{ and if } \dim(\S_{H}) =1 \text{ then } m(\S_{H}) \geq e(P)\} = M(1,e(P)) \cap P
\end{split}
\end{align}

Note that $M(d,e)$ has been defined in Example \ref{advanced example}.8. So $L$ is an A$\m$AC (by lemma \ref{basic lemma}).

Assume for the sake of contradiction that the $\m$-torsion of $\S_{H}$ is unbounded for $h \in L$.

As seen in Lemma \ref{advanced example}, $M(1,e(P)+1) = D(2) \cup \ula{lim} Q_r$, where $D(2) = ZD(B)$ and $Q_r$ is as follows.
\begin{align} \nonumber
\begin{split}
Q_r = \{h \in B/\m^{r} \ |\ \forall \ x \in \m-\m^2 \text{ \st } &\len\bigg(\frac{\O_{\S^{\a}_{H}}}{\<x^k\>+\m^{r}}\bigg) \geq (e(P)+1)k, \\ &\forall \ k \text{ \st } (e(P)+1)k \leq r-2\}
\end{split}
\end{align}

Since $\m$-torsion is unbounded, there is a sequence of $h_i \in L$, \st $\len(T(\O_{\S_{H}},\m)) \geq i$. But by Lemma \ref{dim 1 torsion bound lemma}(5), $\-h_i \in Q_i$. In particular, $Q_i\cap L_i$ is nonempty for each $i$. Now by Lemma \ref{nonempty lemma}, $M(1,e(P)+1) \cap L$ is nonempty, but $M(1,e(P)+1) \cap L$ is empty since $e(P)$ is the maximum possible multiplicity achieved. This is a contradiction to the assumption.

So the $\m$-torsion is bounded (in $L$). Now choose a hypersurface $h \in L$ which has the maximum $\m$-torsion $t(P)$. Then $\S_{H}$ achieves the maximum multiplicity tuple $(e(P),t(P))$. 
\end{proof}

\begin{exmp}
We will compute the above invariant in the case of quadratic singularity. Fix $A = \bb{C}[\![x,y,z]\!]$ , $X = \Spec(B) = \Spec(A/\<x^2+y^2+z^2\>)$ and $P = \pi(\m_A - \m_A^2) = \m_B - \m_B^2$

Let $H$ be a hyperplane given by $h \in A$. Without loss of generality assume that $\frac{\doe h}{\doe z} \neq 0$. Then by implicit function Theorem assume that $H$ is given by $z = f(x,y)$. 
\[ H = \Spec(\K[\![x,y]\!]) \xra{(x,y) \mapsto (x,y,f(x,y))} \Spec(\K[\![x,y,z]\!])\]
Now we see that, $H \cap X$ is given by $x^2+y^2+f^2(x,y) = 0$ as a subset of $H = \Spec(\K[\![x,y]\!])$.

Now $\S_{H\cap X}$ is of dimension $1$, iff $H \cap X$ is not reduced. 
Assume that $x^2+y^2+f^2(x,y) = g^2q$ \st $g \in \m$ and $q \in A$.
If $q \in \m$, then $g^2q$ will be of order $3$ which is not possible. Therefore $q$ is a unit in the power series ring.
So $q = p^2$, and so we may assume $g = gp$ and $q = 1$.

$$x^2 + y^2 + f^2(x,y) = g^2$$

Again if $g \in \m^2$, then the order of $g^2$ is greater than or equal to 4, which is again not possible. So $g\in \m-\m^2$. Since $H \cap X$ is given by $g^2$, $\S_{H\cap X}$ must be given by $g$. Therefore multiplicity of $\S_{H\cap X}$ is 1 and torsion is $0$.

So we get that any multiplicity tuple of $\S_{H\cap X}$ must be bounded by $(1,0)$.

To see that this bound is achieved, we use the hyperplane given by $z=iy$, The intersection is given by $x^2 = 0$.
\end{exmp}

\section{The analytic case}

Fix $\ring{A}$ be the ring of algebraic power series\footnote{ The ring of algebraic power is defined as the ring of all formal power series algebraic over polynomials, which can also be obtained as the henselization of $\K[x_1,\ldots,x_n]$ at maximal ideal $\<x_1,\ldots,x_n\>$} or convergent power series (if $\K = \bb{C}$) and let $\ring{\m}$ be its maximal ideal. 

\begin{defn}
A subset $P \subset B = \ring{A}/I$ is said to be a \textbf{finitely determined A$\m$AC} or simply \textbf{finitely determined class}, if $P =  \pi_i^{-1} (P_i)$ for some $P_i$ which is a constructible subset of $B/\m^i$ (in Zariski topology).
\end{defn}

\begin{rmk}
Note that finitely determined classes are closed under finite intersection, finite union and complement. So they are also $\m$-adically open.
\end{rmk}

\begin{exmp}
Example \ref{basic example}, (2)-(6) are finitely determined i.e.
hyperplanes, $\m^{k} - \m^{l}$, Tjurina number constant, Milnor number constant etc.
\end{exmp}

\begin{thm}\label{analytic result}
Let $\ring{X}$ be an equidimensional subvariety of $\Spec(\ring{A})$. Let $ZD(\ring{X})$ denote zero-divisor of $\O_{\ring{X}}$ in $\ring{A}$. Let $\ring{P} \subset \ring{A}$ be a finitely determined class of functions \st $ZD(\ring{X}) \cap P = \emptyset$. Let $h_i \in \ring{P}$ be an infinite sequence of hyperplanes \st $\S_{\ring{X} \cap \ring{H}_i}$ is of dimension $d$ and the multiplicity of $\S_{\ring{X} \cap \ring{H}_i} \to \inf$ as $i \to \inf$. Then there exists a hyperplane $h \in \ring{P}$ \st $\S_{\ring{X} \cap \ring{H}}$ is of dimension $\geq d+1$.
\end{thm}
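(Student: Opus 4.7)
The strategy is to complete to the formal power series ring, apply the formal version Theorem~\ref{AmAC result}, and then descend back to the analytic/algebraic setting via Artin's approximation theorem. Finite determinacy of $\ring P$ is precisely what makes the Artin step possible.

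Let $A$ denote the $\ring{\m}$-adic completion of $\ring A$ and $X := \Spec(A/I(\ring X)A)$ the formal completion of $\ring X$, with maximal ideal $\m \subset A$. Since $\ring P$ is finitely determined, fix $N$ and a constructible $P_N \subset \ring A/\ring\m^N = A/\m^N$ with $\ring P = \pi_N^{-1}(P_N) \cap \ring A$, and set $P := \pi_N^{-1}(P_N) \subset A$; by construction $P$ is A$\m$AC and contains $\ring P$. The sequence $h_i \in \ring P \subset P$ remains a sequence of non-zero-divisors in $A/I(\ring X)A$ by faithful flatness of $\ring A \to A$, and dimension and Hilbert--Samuel multiplicity of local rings are preserved under completion, so each $\S_{X \cap H_i}$ has dimension $d$ with multiplicity tending to $\inf$. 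Inspecting the proof of Lemma~\ref{AmAC lemma} (the engine of Theorem~\ref{AmAC result}) shows that the zero-divisor hypothesis is used only to identify $\S^{\a}$ with $\S$ in the conclusion: the construction of an $h \in P$ with $\dim \S^{\a}_{X \cap H} \geq d+1$ goes through from the growth of the sequence alone. This yields a formal $\hat h \in P$ with $\dim \S^{\a}_{X \cap \hat H} \geq d+1$.

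To descend $\hat h$ to an analytic/algebraic $\ring h \in \ring P$ preserving the dimension bound, I would encode ``$\dim \S^{\a} \geq d+1$'' as a finite system of polynomial/analytic equations over $\ring A$. The unknowns would be the coefficients of $\ring h$ together with Noether-normalization data for a $(d+1)$-dimensional quotient $R$ of the coordinate ring of $\S^{\a}_{X \cap \hat H}$: elements $y_0,\ldots,y_d$, module generators $e_1,\ldots,e_s$, and structure constants $c_{ijk}$ presenting $R$ as a finite $\K[\![y_0,\ldots,y_d]\!]$-module. The equations would express that every $(n-\a)\times(n-\a)$ Jacobian minor of $(I(X),\ring h)$ maps to $0$ in $R$; that multiplication in $R$ satisfies the prescribed $c_{ijk}$ (associativity and the action of the $y_i$); and that a chosen finite list of monomials in the $y_i$ acting on the $e_k$ is $\K$-linearly independent in $R$, which forces $\dim R \geq d+1$. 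The formal $\hat h$ together with Noether-normalization data extracted from $\S^{\a}_{X \cap \hat H}$ is a formal solution, so Artin's approximation (in Artin's original form when $\ring A$ is convergent, and in Popescu's general form for algebraic power series) produces an analytic/algebraic solution with $\ring h \equiv \hat h \pmod{\ring\m^N}$. Hence $\ring h \in \ring P$, and since $\ring P \cap ZD(\ring X) = \emptyset$ forces $\ring h$ to be a non-zero-divisor, $\S^{\a}_{\ring X \cap \ring H} = \S_{\ring X \cap \ring H}$ has dimension at least $d+1$.

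The main obstacle is this encoding step: ``$\dim \geq d+1$'' is a priori an asymptotic Hilbert--Samuel condition rather than a finite polynomial one. The resolution is to fix in advance the module rank $s$ and the monomial list whose linear independence is tested in $R$, which converts the asymptotic dimension condition into the Zariski-open finite condition of non-vanishing of a specified determinant, exactly the kind of condition stable under the close-in-$\ring{\m}$ approximation supplied by Artin's theorem.
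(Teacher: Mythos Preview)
Your global plan---pass to the completion, invoke the formal Theorem~\ref{AmAC result} (via Lemma~\ref{AmAC lemma}), then descend by Artin approximation, using finite determinacy so that $\ring h\equiv\hat h\pmod{\ring\m^N}$ forces $\ring h\in\ring P$---is exactly the paper's strategy. Your remark that the zero-divisor hypothesis is only needed to convert $\S^{\a}$ into $\S$ at the very end is also correct and worth recording. The divergence, and the gap, is entirely in how you set up the Artin system.

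The paper does not attempt to package a Noether normalisation. It chooses a $(d{+}1)$-dimensional irreducible component $\hat S\subset\S_{\hat X\cap\hat H}$ with prime $\hat{\mf p}=\langle\hat g_1,\ldots,\hat g_l\rangle$ and splits on whether $\hat S\subset\S_{\hat X}$. In the main case $\hat S\not\subset\S_{\hat X}$, a cotangent-space count at the generic point of $\hat S$ shows that $\hat h$ is, after clearing a denominator $\hat t\notin\hat{\mf p}$, a linear combination of the $f_i$ modulo $\hat{\mf p}^{\,2}$; this is what certifies singularity of $\hat X\cap\hat H$ along $\hat S$. The dimension $\dim\hat S=d{+}1$ is encoded by Artin's own device \cite[3.8]{ArtinApprox}: realise $\hat S$ as a component of a complete intersection $\langle\hat g_1,\ldots,\hat g_{n-d+1}\rangle$ and carry an element $\hat d$ vanishing on the other components, so that $\hat d^{\,n}\hat g_i\in\langle\hat g_1,\ldots,\hat g_{n-d+1}\rangle$. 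Every unknown is an element of $\hat A$, and every condition is ideal membership or its negation---precisely the shape Artin approximation accepts.

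Your encoding, as written, does not fit that shape. The structure constants $c_{ijk}$ presenting $R$ over $\K[\![y_0,\ldots,y_d]\!]$ are power series in the \emph{unknowns} $y_i$, not elements of $\hat A$; and you never introduce generators for the kernel of $\O_{\S^{\a}_{X\cap\hat H}}\twoheadrightarrow R$, without which none of the relations you list (vanishing of Jacobian minors in $R$, multiplication table, linear independence of monomials) can be written as polynomial equations over $\ring A$ in $\hat A$-valued unknowns. Your proposed fix---freeze the rank $s$ and a monomial list and test a determinant---still presupposes an explicit presentation of $R$ by unknowns in $\hat A$, which is exactly what is missing. The paper's prime-plus-complete-intersection trick is the standard way to turn ``$\dim\ge d{+}1$'' into finitely many ideal-membership conditions with all unknowns in $\hat A$, and sidesteps these issues entirely.
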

\begin{proof}
The proof of this statement is a slight modification of the argument given in \cite{MoriHST}. For the reader's convenience, we detail it out here. For clarity, during the course of the proof of Theorem \ref{analytic result}, $\^{A}$ will denote the formal power series ring (instead of the usual $A$).

Let $\ring{I} = \<f_1,\dots,f_k\>$ denote the ideal defining $\ring{X}$ and $\^{I} := \ring{I}\^{A}$. Let $\^{X}$ be the subscheme defined by $\^{I}$. By Theorem \ref{AmAC result}, a formal power series $\^{h}$ with the required property exists. We will now use Artin approximation to get the required element in $\ring{A}$.

Since dimension $\S_{\^{X}\cap \^{H}} \geq d+1$, there exist a $d+1$ dimensional irreducible subvariety $\^{S} \subset \S_{\^{X}\cap \^{H}}$. Let $ \^{\mf{p}}$ be the prime ideal defining $\^{S}$.
Now, we split the proof into two cases.

\textbf{Case 1:} $\^{S} \subset \S_{\^{X}}$. Since there is a hypersurface \st $\S_{\ring{X} \cap \ring{H}_i}$ is of dimension $d$, $\dim \S_{\^{X}} \leq d+1$. But $\dim \^{S} = d+1$, so $\^{S}$ is a component of the reduced structure of $\S_{\^{X}}$.
We take the corresponding component of $\S_{\ring{X}}$ and call it $\ring{S}$. In other words, $\^{S}$ already comes from a subscheme of $\Spec(\ring{A})$. We know that $\^{S} \subset \S_{\^{X} \cap \^{H}} \subset \^{H}$. So by Artin approximation, we can find a $\ring{H}$, \st $\ring{S} \subset \ring{H}$.

\textbf{Case 2:} $\^{S} \not\subset \S_{\^{X}}$. Then $\^{X}$ is smooth at the generic point of $\^{S}$. Let us now analyze the dimension of cotangent space of $\Spec(\^{A})$, $\^{X}$, and $\^{X}\cap \^{H}$ at point $\^{\mf{p}}$. Here $k(\^{\mf{p}}) =  \frac{\^{A}_{\^{\mf{p}}}}{\^{\mf{p}}\^{A}_{\^{\mf{p}}}}$
\begin{align} \nonumber
\begin{split}
\dim_{k(\^{\mf{p}})} T^*_{\^{\mf{p}}}(\Spec(\^{A})) &= \frac{\^{\mf{p}}\^{A}_{\^{\mf{p}}}}{\^{\mf{p}}^2\^{A}_{\^{\mf{p}}}} = n-(d+1)\\
\dim_{k(\^{\mf{p}})} T^*_{\^{\mf{p}}}(\^{X}) &= \frac{\^{\mf{p}}\^{A}_{\^{\mf{p}}}}{(\^{\mf{p}}^2 + I)\^{A}_{\^{\mf{p}}}} = (\a +1)-(d+1) \text{ because $\^{X}$ is a smooth at $\^{\mf{p}}$}\\
\dim_{k(\^{\mf{p}})} T^*_{\^{\mf{p}}}(\^{X} \cap \^{H}) &= \frac{\^{\mf{p}}\^{A}_{\^{\mf{p}}}}{(\^{\mf{p}}^2 + I +\<\^{h}\>)\^{A}_{\^{\mf{p}}}} > \a-(d+1) \text{ because $\^{X} \cap \^{H}$ is singular at $\^{\mf{p}}$}
\end{split}
\end{align}

Now we have a surjective map $T^*_{\^{\mf{p}}}(X) \to T^*_{\^{\mf{p}}}(X \cap H)$, using the dimensions computed above we get that the map is an isomorphism. So we obtain that $\^{h}$ is linearly dependent on $f_1,\dots,f_k$ in $\^{A}_{\^{\mf{p}}}$ mod $\^{\mf{p}}^2$. We lift this dependence to $\^{A}$ to get an element $\^{t}$ not in $\^{\mf{p}}$ \st

\[\^{t}\big(\^{\g}\^{h} + \sum_i \^{\g_i} f_i\big) \in  \^{\mf{p}}^2\]

To ensure the dimension of the Artin approximation of $\^{S}$ is $d+1$, We will use the equations given by Artin in \cite[3.8]{ArtinApprox}. 
Let $\^{\mf{p}}$ be generated by $\<\^{g}_1,\ldots,\^{g}_l\>$.
Assume further $\^{g}_1,\dots,\^{g}_{n-d+1}$ form a regular sequence. Geometrically this means $\^{S}$ is a subscheme of a complete intersection $\^{C}$ (defined by $\<\^{g}_1,\ldots,\^{g}_{n-d+1}\>$) of dimension $d+1$. Then $\^{S}$ must be a component of $\^{C}$. 
Choose $\^{d} \in \^{A}$ \st $\^{d}$ vanishes on all components of $\^{C}$ except $\^{S}$. Then $\^{d}^n \^{g}_i \in \<\^{g}_1,\ldots,\^{g}_{n-d+1}\>$. We now look at the following ``equations". The statement ``belongs to an ideal", can be made into an equation by replacing the dependence relation with auxiliary variables. 

\[\^{h} \in \<\^{g}_1,\dots,\^{g}_l\>\qquad
f_i \in \<\^{g}_1,\dots,\^{g}_l\>\qquad
\^{t} \not\in \<\^{g}_1,\dots,\^{g}_l\>\]
\[\^{t}(\^{\g}\^{h} + \sum_i \^{\g_i} f_i) \in  \<\^{g}_1,\dots,\^{g}_l\>^2\]
\[\^{d}^n \^{g}_i \in \<\^{g}_1,\dots,\^{g}_{n-d+1}\>\]

Applying Artin approximation with $\^{h}$, $\^{g}_1,\dots,\^{g}_l$, $\^{d}$, $\^{t}$ as variables we get elements $h,g_1,\dots,g_l,d, t\in\ring{A}$ satisfying the above relations.
The last condition ensures that the dimension of scheme $\ring{S}$  defined by $\<g_1,\ldots,g_l\>$ is greater than $d$. If we approximate $\^{g}_i$to a large degree, then $\<g_1,\dots,g_l\>$ must be a prime. So, the rest of the conditions give us $\ring{S} \in \S_{\ring{X} \cap \ring{H}}$.
\end{proof}

We see that if the singular locus of $X \cap H_i$ has dimension $d$ and its multiplicity tends to infinity, then we have some hyperplane  \st $X \cap H$ has dimension $\geq d+1$. But does there exist some hyperplane \st $X \cap H$ has dimension exactly $d+1$? The answer is ``yes" as proved in the following Theorem, which can be thought of as the converse of Theorem \ref{Hyperplane section theorem}.

\begin{thm}
Let $X$ be an irreducible variety of $\Spec(\ring{A})$ (or $\Spec(A)$) \st $\dim(\S_X) \leq d$.
Let $H$ be a hyperplane \st $\S_{X\cap H}$ is of dimension $d>0$. Given $i \in \bb{Z}_{>0}$ and $j \in \bb{Z}$ \st $d-1 \geq j \geq \dim(\S_X) -1$, there exists a hyperplane $H_i$ \st $\S_{X\cap H_i}$ is of dimension $j$ and the multiplicity of $\S_{X\cap H_i}$ is $\geq i$.
\end{thm}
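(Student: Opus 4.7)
The plan is to construct $H_i$ as a small perturbation $h + g_i$ of the given hyperplane, with $g_i \in \m^{r}$ for $r$ chosen sufficiently large depending on $i$. The multiplicity growth then follows automatically from the equality of truncated Hilbert--Samuel functions supplied by Lemma~\ref{singular locus semicontinuity}; the substantive work lies in arranging the singular locus of $X \cap H_i$ to have dimension exactly $j$.

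For the multiplicity estimate, suppose that $h_i = h + g$ with $g \in \m^{r}$ and $\dim \S_{X \cap H_i} = j$. Since $h_i \equiv h \pmod{\m^{r}}$, Lemma~\ref{singular locus semicontinuity} gives
\[
\len\bigl(\O_{\S^{\a}_{X \cap H_i}}/\m^{r-1}\bigr) \;=\; \len\bigl(\O_{\S^{\a}_{X \cap H}}/\m^{r-1}\bigr) \;\geq\; \binom{r-2+d}{d},
\]
where the inequality is Lemma~\ref{dim bound lemma}(1) applied to the $d$-dimensional scheme $\S_{X \cap H}$. Because $\dim \S_{X \cap H_i} = j$, for $r$ large its Hilbert--Samuel function equals $e_i \binom{r-2+j}{j} + O(r^{j-1})$ with $e_i$ the multiplicity. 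Comparing leading growth rates via Lemma~\ref{poly degree} forces $e_i$ to grow at least like $r^{d-j}$, so choosing $r$ large enough (depending on $i$, $d$, $j$) guarantees $e_i \geq i$.

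For the dimension constraint, consider the sets $P_k := (h + \m^{r}) \cap D(k)$, each of which is A$\m$AC by Lemma~\ref{dimension AmAC} and Lemma~\ref{basic lemma}. These are nested $P_d \supset P_{d-1} \supset \cdots \supset P_{\dim \S_X - 1}$ with $h \in P_d$, and we want to exhibit an element of $P_j \setminus P_{j+1}$. We proceed by descending induction on $k$ from $d$ down to $j$: given $h' \in P_k \setminus P_{k+1}$ (realising $\dim \S = k$), pick a generic direction $\tilde g \in \m^{r}/\m^{r+s}$ and consider the one-parameter family $h' + t\tilde g$ for $t \in \K$. By upper semi-continuity (Lemma~\ref{singular locus semicontinuity}) the set of $t$ with $\dim \S_{X \cap H_{h' + t\tilde g}} \geq k$ is closed in $\K$ and proper, since for generic $t$ the dimension drops to $\dim \S_X - 1 \leq j < k$. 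One selects a $t_0$ for which the singular locus dimension equals $k - 1$; summing the $d - j$ perturbations produced across the inductive steps yields a single $g_i \in \m^{r}$ giving $h_i \in P_j \setminus P_{j+1}$. The hypothesis $j \geq \dim \S_X - 1$ is exactly what guarantees that the induction can reach the target dimension without overshooting.

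For the analytic case, one first performs the construction formally in the completion $\hat{\ring A}$, then applies Artin approximation as in the proof of Theorem~\ref{analytic result} to produce the required $\ring h_i \in \ring A$. The main obstacle in this plan is the claim that on the generic one-parameter family the dimension of the singular locus actually attains the immediately smaller value $k - 1$ (rather than jumping further down to $\dim \S_X - 1$); a clean justification probably requires replacing the direct parameter-space analysis with an A$\m$AC non-emptiness argument via Lemma~\ref{nonempty lemma} applied to the intersection $(h' + \m^{r}) \cap D(k-1)$, together with a careful analysis of the stratification by $D(k-1) \setminus D(k)$ at each finite truncation.
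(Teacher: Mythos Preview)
Your approach is quite different from the paper's, and both of its main steps have genuine gaps.

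For the multiplicity estimate: knowing $\chi_{\S_{X\cap H_i}}(k)=\chi_{\S_{X\cap H}}(k)$ for all $k\le r-2$ does \emph{not} force the multiplicity of $\S_{X\cap H_i}$ to be large. The Hilbert--Samuel function can exceed the Hilbert--Samuel polynomial by an arbitrary amount before the two coincide, and the threshold where they coincide depends on the ring---hence on $h_i$, which itself depends on $r$. Concretely, $R=\K[[x,y]]/\bigl(x\,\m^{r-2}\bigr)$ satisfies $\chi_R(k)=\binom{k+2}{2}$ for all $k\le r-2$, has dimension $1$, yet has multiplicity $1$ for every $r$: the large values of $\chi_R$ come from a fat embedded point at the origin, not from multiplicity along the one-dimensional component. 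So Lemma~\ref{poly degree} cannot be invoked as you do; you would need $\chi_{\S_{X\cap H_i}}$ to already equal its Hilbert--Samuel polynomial at $k=r-2$, and nothing in your setup guarantees this uniformly across the perturbations. For the dimension step you already flag the problem: there is no reason a one-parameter family should realise the intermediate value $k-1$, and the A$\m$AC patch you propose would have to produce an element of $D(k-1)\setminus D(k)$ inside $h+\m^r$, which is a difference of A$\m$AC classes and so is not directly handled by Lemma~\ref{nonempty lemma}.

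The paper bypasses both issues by an explicit construction. It first reduces to $j=d-1$ (the general case then follows by iteration). Choosing coordinates so that $H=\{z=0\}$, it picks a \emph{generic} auxiliary hyperplane $H'=\{z=f(x_1,\dots,x_n)\}$ satisfying transversality conditions, and sets $H_i:=\{z=f^i\}$. The genericity of $H'$ directly forces $\dim\S_{X\cap H_i}\le d-1$. For the multiplicity, the key observation is that modulo $f^{i-1}$ the defining ideals of $\S_{X\cap H}$ and $\S_{X\cap H_i}$ coincide, so $\S_{X\cap H_i}$ contains the explicit $(d-1)$-dimensional closed subscheme $\S_{X\cap H}\cap\{f^{i-1}=0\}$, whose multiplicity is at least of order $i$. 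The lower bound thus comes from a concrete subscheme inclusion of equal dimension, not from truncated Hilbert--Samuel data---and this is exactly what sidesteps the obstruction above. Note also that this construction works directly in the convergent (or algebraic) power series ring, so no Artin approximation step is needed here.
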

\begin{proof}
It suffices to prove the statement for $j=d-1$.

Choose a coordinate $x_1,\ldots , x_n,z$ for $\ring{A}$, \st $H$ is given by $z=0$.

Choose a general hyperplane $H'$ satisfying the constraints (1)-(4).
\begin{enumerate}
\item $\begin{aligned}[t]
    \frac{\doe h'}{\doe z} \neq 0
\end{aligned}$

Using the implicit function Theorem, this implies that $H'$ is given by $z= f(x_1,\dots,x_n)$. Define $H_k$ to be the hyperplane given by $z=f^k(x_1,\dots,x_n)$.
Fix $i \geq 2$

\item $\dim(\S_{X\cap H} \cap H_i) = d-1$ (i.e. $H_i$ does not contain any $d$ dimensional component of $\S_{X\cap H}$.)
\item $\dim(\S_X \cap H_i) \leq d-1$ (i.e. $H_i$ does not contain any $d$ dimensional component of $\S_{X}$, if any.)
\item $H_i \cap X - (\S_{X \cap H} \cup \S_X)$ should be smooth.
\end{enumerate}

Let $C' = \S_{X\cap H} \cap H'$ and $C_i = \S_{X\cap H} \cap H_i$. Since $\S_{X\cap H} \subset H$, which is given by $\{z=0\}$. As a consequence, $C'$ and $C_i$ in $\S_{X\cap H}$ are defined by $f(x_1,\dots,x_n) = 0$ and $f(x_1,\dots,x_n)^i = 0$ respectively. So $C'_{red} = (C_{i})_{red}$, which implies $\dim(C') = \dim(C_{i}) = d-1$.

By choice of $H_i$ (using (2,3,4)), $\dim(\S_{H_i \cap X}) \leq d-1$.

Let $S$ be given by $f^{i-1}(x_1,\dots,x_n)$ in $\Spec(\ring{A})$.
Note that $C_i = \S_{X\cap H} \cap S$ is contained inside $\S_{X\cap H_i}$ because modulo $f^{i-1}(x_1,\dots,x_n)$, the spaces $\S_{X\cap H}$ and $\S_{X\cap H_i}$ are the same. Therefore, multiplicity of $\S_{X\cap H_i}$ = multiplicity of $C_i$ which is greater than or equals to $i$.
\end{proof}

\renewcommand{\abstractname}{Acknowledgements}
\begin{abstract}  The first author would like to thank  Mathematisches Forschungsinstitut Oberwolfach for its  Research in Pairs program where this project was initiated with Mihai Tibar. He would like to express his gratitude to  Mihai Tibar for the same.

This work was supported by the Department of Atomic Energy, Government of India [project no. 12 - R\&D - TFR - 5.01 - 0500].
\end{abstract}
\renewcommand{\abstractname}{Abstract}

\bibliographystyle{alpha}
\bibliography{references}

\end{document}